\def\dOi{11(3:18)2015}
\subjclass{F.4.1}
\theoremstyle{plain}
\newtheorem{theorem}{Theorem}[section]
\newtheorem{lemma}[theorem]{Lemma}
\newtheorem{proposition}[theorem]{Proposition}
\newtheorem{corollary}[theorem]{Corollary}
\theoremstyle{definition}
\newtheorem{definition}[theorem]{Definition}
\newtheorem{example}[theorem]{Example}
\newtheorem{remark}[theorem]{Remark}
\newcommand{\Alg}{\mathsf{Alg}}
\newcommand{\BA}{\mathsf{BA}}
\newcommand{\Pos}{\mathsf{Pos}}
\newcommand{\DL}{\mathsf{DL}}
\newcommand{\Coalg}{\mathsf{Coalg}}
\newcommand{\Set}{\mathsf{Set}}
\newcommand{\Two}{\mathbbm{2}}
\newcommand{\pcal}{\mathcal P}
\def\Alg{{\mathsf{Alg}}}
\def\op{{\mathsf{op}}}
\newcommand{\ff}{\mathit{ff}}
\def\eps{\varepsilon}
\def\Lan{{\mathrm{Lan}}}
\def\wh#1{\widehat{#1}}
\newcommand{\opp}{{\mathsf{op}}}
\newcommand{\id}{\mathrm{id}}
\newcommand{\Id}{\mathsf{Id}}
\newcommand{\Pre}{\mathsf{Preord}}
\newcommand{\atomprop}{{\mathsf{AtProp}}}
\newcommand{\acal}{\mathscr{A}}
\newcommand{\lcal}{\mathcal{L}}
\def\K{\mathscr{K}}
\newcommand{\sem}[1]{[\![#1]\!]}
\begin{document}

%========================================================%

\title[Positive Fragments of Coalgebraic Logics]{Positive Fragments of Coalgebraic Logics\rsuper*}

%========================================================%

\author[A.~Balan]{Adriana Balan\rsuper a}
\address{{\lsuper a}University Politehnica of Bucharest, Romania}
\email{adriana.balan@mathem.pub.ro}
\thanks{{\lsuper a}Adriana Balan was supported by CNCSIS project
PD-19/03.08.2010.}

\author[A.~Kurz]{Alexander Kurz\rsuper b}
\address{{\lsuper b}University of Leicester, United Kingdom}
\email{ak155@mcs.le.ac.uk}
\author[J.~Velebil]{Ji\v{r}\'{\i} Velebil\rsuper c}
\address{{\lsuper c}Faculty of Electrical Engineering, Czech Technical University
         in Prague, Czech Republic}
\email{velebil@math.feld.cvut.cz}
 \thanks{{\lsuper c}Ji\v{r}\'{\i} Velebil is 
supported by  the grant No.~P202/11/1632
of the Czech Science Foundation.}

%========================================================%

\keywords{coalgebraic logic, duality, positive modal logic}
\titlecomment{{\lsuper *}This is an extended and improved version of~\cite{calco2013}.}
% }

%========================================================%

\begin{abstract}
\noindent Positive modal logic was introduced in an influential 1995 paper of Dunn as the positive fragment of standard modal logic. His completeness result consists of an axiomatization that derives all modal formulas that are valid on all Kripke frames and are built only from atomic propositions, conjunction, disjunction, box and diamond.

\medskip\noindent 
In this paper, we provide a coalgebraic analysis of this theorem, which not only gives a conceptual proof based on duality theory, but also generalizes Dunn's result from Kripke frames to coalgebras for weak-pullback preserving functors. 

\medskip\noindent 
To facilitate this analysis we prove a number of category theoretic results on functors on the categories $\Set$ of sets and $\Pos$ of posets:

\medskip\noindent 
Every functor $\Set\to\Pos$ has a $\Pos$-enriched left Kan extension $\Pos\to\Pos$. Functors arising in this way are said to have a \emph{presentation in discrete arities}. In the case that $\Set\to\Pos$ is actually $\Set$-valued, we call the corresponding left Kan extension $\Pos\to\Pos$ its \emph{posetification}.

\medskip\noindent 
A set functor preserves weak pullbacks if and only if its posetification preserves exact squares. A $\Pos$-functor with a presentation in discrete arities preserves surjections.

\medskip\noindent 
The inclusion $\Set\to\Pos$ is dense. A functor $\Pos\to\Pos$ has a presentation in discrete arities if and only if it preserves coinserters of `truncated nerves of posets'. 
%
%\medskip\noindent 
A functor $\Pos\to\Pos$ is a posetification if and only if it preserves coinserters of truncated nerves of posets and discrete posets. 

\medskip\noindent 
A locally monotone endofunctor of an ordered variety has a presentation by monotone operations and equations if and only if it preserves $\Pos$-enriched sifted colimits.

\end{abstract}

%========================================================%

\maketitle

%========================================================%

\setcounter{tocdepth}{1}
\newpage
\tableofcontents

%========================================================%

\section{Introduction}

\medskip\noindent
Consider modal logic as given by atomic propositions, Boolean
operations, and a unary box, together with its usual axiomatisation stating that box preserves finite meets. In~\cite{dunn:pml}, Dunn answered the
question of an axiomatisation of the
positive fragment of this logic, where the positive fragment is given
by atomic propositions, lattice operations, and unary box and diamond (but no negation).

\medskip\noindent
Here we seek to generalize this result from Kripke frames to coalgebras
for a weak pullback preserving functor. Whereas Dunn had no need to
justify that the positive fragment actually \emph{adds} a modal
operator (the diamond), the general situation requires a conceptual
clarification of this step. And, as it turns out, what looks innocent
enough in the familiar case is at the heart of the general
construction.

\medskip\noindent 
In the general case, we start with a functor $T:\Set\to\Set$. From $T$ we can obtain by duality a functor $L:\BA\to\BA$ on the category $\BA$
of Boolean algebras, so that the free $L$-algebras are exactly the
Lindenbaum algebras of the modal logic. We are going to take the functor $L$ itself as the category theoretic counterpart of the corresponding modal logic. How should we construct the positive $T$-logic? Dunn gives us a hint in that he notes that in the
same way as standard modal logic is given by algebras over $\BA$,
positive modal logic is given by algebras over the category $\DL$ of
(bounded) distributive lattices. It follows that the positive fragment
of (the logic corresponding to) $L$ should be a functor $L':\DL\to\DL$
which, in turn, by duality, should arise from a functor
$T':\Pos\to\Pos$ on the category $\Pos$ of posets and monotone maps.

\medskip\noindent
The centrepiece of our construction is now the observation that any functor $T:\Set\to\Set$ has a universal extension to a functor
$T':\Pos\to\Pos$. Theorem~\ref{mainthm} then shows that this
construction $T\mapsto T' \mapsto L'$ indeed gives the positive
fragment of $L$ and so generalizes Dunn's theorem.

\medskip\noindent
An important observation about the positive fragment is the following:
given any Boolean formula, we can rewrite it as a positive formula
with negation appearing only on atomic propositions. In other words,
the translation $\beta$ from positive logic to Boolean logic given
by 
\begin{gather}
\label{eq:translation1}
\beta(\Diamond \phi)  =   \neg\Box\neg \beta(\phi)\\ 
\label{eq:translation2}
\beta(\Box \phi)  =  \Box\beta(\phi) 
\end{gather}
induces a bijection (on equivalence classes of formulas taken up to
logical equivalence). More algebraically, we can formulate this as
follows. 

\medskip\noindent
Given a Boolean algebra $B\in\BA$, let $LB$ be the free Boolean algebra
generated by the set $\{\Box b \mid b\in B\}$ modulo the axioms of modal
logic. Given a distributive lattice $A$, let $L'A$ be the free
distributive lattice generated by $\{\Box a\mid a\in
A\}\cup\{\Diamond a\mid a\in A\}$ modulo the axioms of positive
modal logic. Further, let us denote by $W:\BA\to\DL$ the forgetful
functor. Then the above observation that every modal formula can be
written, up to logical equivalence, as a positive modal formula with
negations pushed to atoms, can be condensed into the statement that
the (natural) distributive lattice homomorphism
\begin{equation}\label{eq:beta}
\beta_B:L'WB \to WLB
\end{equation}
induced by~Equations~\eqref{eq:translation1}-\eqref{eq:translation2} is an
isomorphism.

\medskip\noindent
Our main results, presented in Sections~\ref{sec:poscoalglog} and \ref{sec:monotone_predicate_liftings}, are the following. If $T'$ is an extension of $T$ and
$L,L'$ are the induced logics, then $\beta:L'W\to WL$ exists. If,
moreover, $T'$ is the induced extension (posetification) of $T$ and $T$ preserves weak pullbacks, then
$\beta$ is an isomorphism (Theorem~\ref{mainthm}). Furthermore, in the same way as the induced logic $L$ can be seen as the logic of all predicate liftings of $T$, the induced logic $L'$ is the logic of all \emph{monotone} predicate liftings of $T$ (Theorem~\ref{thm:mon-lift}). 

\medskip\noindent 
These results depend crucially on the fact that the posetification $T'$ of $T$ is defined as a completion with respect to $\Pos$-enriched colimits. We devote Section~\ref{sec:Pos-functors} to establishing some results on posetifications used later. We show that posetifications always exists (Theorem~\ref{thm:posetification}). Moreover, we characterize those functors $\Pos\to\Pos$ that arise as posetifications as the functors that preserve coinserters of `truncated nerves of posets' and discrete posets (Theorem~\ref{thm:pres-nerves}). We also establish properties of posetifications needed in Section~\ref{sec:poscoalglog}, for example, that a functor $\Set\to\Set$ preserves weak pullbacks if and only if its posetification preserves exact squares (Theorem~\ref{thm:wpb_exsq}).

\medskip\noindent 
On the algebraic side, the move to $\Pos$-enriched colimits guarantees that the modal operations are monotone. In Section~\ref{ak}, and recalling~\cite[Theorem 4.7]{kurz-rosicky:strong} 
stating that a functor  $L':\DL\to\DL$ preserves ordinary sifted colimits if and only if it has a presentation by operations and equations, we show that $L':\DL\to\DL$ preserves \emph{enriched} sifted colimits if and only if it has a presentation by \emph{monotone} operations and equations (Theorem~\ref{thm:presentmonop}). To see the relevance of a presentation result specific to monotone operations, observe that in the example of positive modal logic it is indeed the case that both $\Box$ and $\Diamond$ are monotone.

\medskip\noindent
From the point of view of category theory the results of Sections~\ref{sec:Pos-functors} and \ref{ak} are of independent interest. In addition to what we already discussed, we introduce the concept of functors $\Pos\to\Pos$ with presentations in discrete arities. They generalise posetifications and are functors that arise as left Kan extensions of functors $H:\Set\to\Pos$ along the discrete functor $D:\Set\to\Pos$. They are characterised as those functors preserving coinserters of `truncated nerves of posets' (Theorem~\ref{thm:discretearities}).  An important property of $\Pos$-functors with presentations in discrete arities is that---like $\Set$-functors but unlike general $\Pos$-functors---they preserve surjections (Proposition~\ref{prop:T'-pres-monot-surj}).

%========================================================%

\medskip\noindent {\textbf{Acknowledgments}}. The authors would like to thank the referees for their valuable suggestions.

%========================================================%

\section{A review of  coalgebraic logic}
\label{sec:coalgebras}

\medskip\noindent
A Kripke model $(W,R,v)$ (see e.g. \cite{brv:ml} for an introduction to modal logic) with $R\subseteq W\times W$ and $v:W\to
2^\atomprop$ can also be described as a coalgebra $W\to \pcal W \times
2^\atomprop$, where $\pcal W$ stands for the powerset of $W$. This
point of view suggests to generalize modal logic from Kripke frames to
coalgebras 
\[
\xi:X\to TX
\]
where $T$ may now be any functor $T:\Set\to\Set$, see \cite{rutten:uc} for an introduction. We  recover
Kripke models by putting $TX=\pcal X \times 2^\atomprop$. We also
recover the so-called bounded morphisms or p-morphisms as coalgebras
morphisms $f:(X, \xi)\to (X', \xi')$, that is, as maps $f:X\to X'$ such that 
$Tf\circ \xi = \xi'\circ f$. 

%========================================================%

\subsection{Coalgebras and algebras} 

More generally, for any category $\K$ 
and any functor $T:\K\to\K$, we have the
category $\Coalg(T)$ of $T$-coalgebras with objects and morphisms as
above. Dually, $\Alg(T)$ is the category where the objects
$\alpha:TX\to X$ are arrows in $\K$ and where the morphisms
$f:(X,\alpha)\to(X',\alpha')$ are arrows $f:X\to X'$ in $\K$ such
that $f\circ\alpha=\alpha'\circ Tf$. It is worth noting that
$T$-coalgebras over $\K$ are dual to $T^\op$-algebras over
$\K^\op$, that is, 
$\Coalg(T)^\op = \Alg(T^\op)$. Here $\K^\op$ is the category which has the same objects and arrows as $\K$ but domain and codomain of arrows interchanged and $T^\op:\K^\op\to \K^\op$ is the functor that has the same action on objects and morphisms as $T$.

%========================================================%

\subsection{Duality of Boolean algebras and sets} 

The abstract duality between algebras and coalgebras becomes particularly interesting if we put it on top of a concrete duality, such as the
dual adjunction between the category $\Set$ of sets and functions and the
category $\BA$ of Boolean algebras. We denote by $P:\Set^\op\to\BA$ the
functor taking powersets and by $S:\BA\to\Set^\op$ the
functor taking ultrafilters. Alternatively, we can
describe these functors by $PX=\Set(X,2)$ and $SA=\BA(A,\Two)$,
which also determines their action on arrows (here $\Two$ denotes the
two-element Boolean algebra). $P$ and $S$ are adjoint, satisfying
$\Set(X,SA)\cong\BA(A,PX)$. Restricting $S$ and $P$ to finite
Boolean algebras/sets, this adjunction becomes a dual 
equivalence~\cite[\textsection~VI.(2.3)]{johnstone:stone-spaces}.

%========================================================%

\subsection{Boolean logics for coalgebras, syntax} 

What now are logics for coalgebras? We follow a well-established methodology in
modal logic~\cite{brv:ml} and study modal logics via the associated
category $\acal$ 
of modal algebras. In modal logic, major milestones are \cite{jonnsontarski} and \cite{goldblatt}. In computer science, we have the seminal  work on domain theory in logical form \cite{abramsky:dtlf,abramskyjung}. 

\medskip\noindent More formally, given a modal logic $\lcal$
extending Boolean propositional logic with its category
$\acal$ of modal algebras, we describe $\lcal$ by a
functor $$L:\BA\to\BA$$ so that the category $\Alg(L)$ of algebras for
the functor $L$ coincides with $\acal$. In particular, the Lindenbaum
algebra of $\lcal$ will be the initial $L$-algebra.

\begin{example}\label{exle:Lpcal}
Let $L:\BA\to\BA$ be the functor mapping an algebra $A$ to the algebra $LA$ generated by $\Box a$, $a \in A$, and quotiented by the relation stipulating that $\Box$ preserves finite meets, that is,
\begin{equation}\label{equ:K}
\Box \top = \top \quad\quad\quad \Box(a\wedge b) = \Box a \wedge \Box b
\end{equation} 
$\Alg(L)$ is the category of modal algebras (Boolean algebras with operators), a result which goes back to~\cite{Abramsky88}.
\end{example}

%========================================================%

\subsection{Boolean logics for coalgebras, semantics}

The semantics of such a logic is described by a natural transformation
\[
\delta:LP\to PT^\op
\]
Intuitively, each modal operator in $LPX$ is assigned its meaning as a
subset of $TX$. More formally, $\delta$ allows us to lift
$P:\Set^\op \to\BA$ to a functor  
\begin{align*}
 \Coalg(T) \ & \overset{P^\sharp}{\to} \ \Alg(L) \\
(X,\xi) \ \ & \mapsto \ P^\sharp (X, \xi)=(PX, \ LPX\stackrel{\delta_X}{\longrightarrow} PTX\stackrel{P\xi}{\longrightarrow} PX)
\end{align*}
If we consider a formula $\phi$ to be an element of the initial $L$-algebra (the Lindenbaum algebra of the logic), then the semantics
$$\sem{\phi}_{(X,\xi)}$$ of $\phi$ as a subset of a coalgebra $(X,\xi)$ is given by the unique arrow from that
initial algebra to $ P^\sharp (X,\xi)$.

\begin{remark}
This account of the semantics of modal logic is typical for the coalgebraic approach: One first defines a one-step semantics $\delta$ mapping formulas with precisely one layer of modal operators (as described by $L$) to one-step-transitions on the semantic side (as described by $T$). Then one uses (co)induction to extend the `one-step situation' to arbitrary formulas and behaviors.
\end{remark}

\begin{example}\label{exle:deltaLpcal}
For the logic $L:\BA\to\BA$ from Example~\ref{exle:Lpcal} and the powerset functor $\pcal:\Set\to\Set$ 
we define the (one-step) semantics 
$\delta_X: L P X\to P \pcal ^\op X$ by
\begin{equation}\label{equ:delta}
  \Box a \mapsto \{b\subseteq X\mid b\subseteq a\}, \mbox{ for } a\in PX.
\end{equation} 
It is an old result in domain theory that $\delta_X$ is an isomorphism for finite $X$, see~\cite{Abramsky88}. This implies completeness of the axioms~\eqref{equ:K} with respect to Kripke semantics~\eqref{equ:delta}.
\end{example} 

%========================================================%

\subsection{Completeness and Expressiveness}\label{sec:Completeness} 
Two important properties of logics, with respect to their coalgebraic semantics, that can be discussed parametrically in $T$, $L$ and the semantics 
\[
\delta:LP\to PT^\op
\]
are completeness and expressiveness. The logic given by $(L,\delta)$ is \emph{complete} if $\sem{\phi}_{(X,\xi)}\subseteq\sem{\psi}_{(X,\xi)}$ for all coalgebras ${(X,\xi)}$ implies that $\phi\le\psi$ holds in the initial $L$-algebra. The logic is \emph{expressive} if for any two elements of a coalgebra  ${(X,\xi)}$ which are not bisimilar, there is a formula that separates them. 

\medskip\noindent In the duality approach to logics as exemplified by work such as Goldblatt's~\cite{goldblatt} in modal modal logic and Abramsky's~\cite{abramsky:dtlf} in domain theory, completeness and expressiveness follow immediately if $\delta$ is an isomorphism. Since in that work the modal algebras and the coalgebras are based on dually equivalent categories, such as Boolean algebras and Stone spaces (or distributive lattices and Priestley spaces), the requirement that $\delta$ is an isomorphism is reasonable.

\medskip\noindent In our setting, the situation is different since we only have an adjunction, not an equivalence, relating $\BA$ and $\Set$ (or $\DL$ and $\Pos$). One of the consequences is that $\delta$ typically fails to be an isomorphism and the best we can usually expect is to have $\delta_X$ an isomorphism for \emph{finite} $X$. But, as it turns out, under relatively mild conditions on the functors $T$ and $L$ involved, one can show that the logic $L$ is complete if and only if $\delta$ is injective~\cite{kupkeetal:algsem,kurz-petr:manysorted}, and that $L$ is expressive if the adjoint transpose $SL\to T^\op S$ of $\delta$ is injective~\cite{klin,jacobssokolova} or, in the case of $\Pos$, an embedding \cite{kkv:aiml}. We will pick up this discussion again at the end of \mbox{Sections~ \ref{sec:poscoalglog} and~\ref{sec:monotone_predicate_liftings}}. 

%========================================================%

\subsection{Functors having presentations by operations and equations}\label{sec:Presentations} 

Given that the notion of functor is axiomatic and rather abstract, the question arises which functors $L:\BA\to\BA$ can legitimately be considered to be a modal logic. For us, in this paper, the requirement on $L$ is that $L$ has a presentation by operations and equations \cite{bons-kurz:fossacs06}. We will discuss this notion in detail in Section~\ref{sec:Pos-functors}, for now it is enough to recall an example.

\medskip\noindent For a presentation $\langle\Sigma_\BA,E_\BA\rangle$ of $\BA$ we let 
$\Sigma_\BA=\{\bot,\top,\neg,\vee,\wedge\}$ and $E_\BA$ the usual equations of Boolean algebra.  For a presentation $\langle \Sigma_L, E_L\rangle$ of the functor $L:\BA\to\BA$ from Example~\ref{exle:Lpcal} we let $\Sigma_L=\{\Box\}$ and $E_L$ the equations  \eqref{equ:K}.

\medskip\noindent The first reason why the notion of a presentation of a functor is important to us is the following. If $\langle \Sigma_L, E_L\rangle$ is a presentation of $L$ by operations $\Sigma_L$ and equations $E_L$ and if $\langle \Sigma_\BA, E_\BA\rangle$ is a presentation
 of $\BA$ 
then $\Alg(L)$ is presented by 
$$\langle\Sigma_\BA+\Sigma_L,E_\BA+E_L\rangle.$$
In such a situation, the logical description of $L$-algebra is obtained in a modular way from the logical description of the base category $\BA$ and the logical description of the functor $L$. We call the elements of $\Sigma_L$ \emph{modal operators} and the elements of $E_L$ \emph{modal axioms} or just axioms.

\medskip\noindent The second reason is that the class of functors having presentations  can be captured in a purely semantic way: a functor $L$ has a presentation by operations and equations if and only if  $L$ is determined by its action on finitely generated free algebras, if and only if $L$ preserves sifted colimits, see~\cite{kurz-rosicky:strong}. 
 We shall give more 
details on sifted colimits in Section~\ref{sec:sifted} below.

\medskip\noindent Most succinctly, endofunctors of $\BA$ having presentations by operations and equations are precisely those that arise as 
{\em left Kan extensions\/} 
of their restrictions along the inclusion functor $\BA_{\ff}\to\BA$ 
of the full subcategory $\BA_\ff$ of $\BA$ consisting of free algebras
on finitely many generators.

%========================================================%

\subsection{The (finitary, Boolean) coalgebraic logic of a $\Set$-functor}\label{subsec:BAlogic}
The question considered in this paragraph is: given the coalgebra-type $T$, how can we define the logic $\mathbf L$ of $T$? As there are many different logics $L$ for $T$, we shall use $\mathbf L$ for \emph{the} logic of $T$, that is, the strongest logic that captures all aspects of $T$ that can be expressed by a finitary Boolean logic.

\medskip\noindent The general considerations laid out above suggest that in order to define the finitary (Boolean) coalgebraic logic associated 
to a given functor $T:\Set\to\Set$ one first puts
\begin{equation}\label{eq:LFn}
  \mathbf LFn = PT^\op SFn
\end{equation}
where $Fn$ denotes the free Boolean algebra over $n$ generators, 
for $n$ ranging over natural numbers. The functor 
$\mathbf{L}:\BA\to\BA$ is then defined as a left Kan extension.
The semantics $\boldsymbol \delta$ is given by observing that natural
transformations $\boldsymbol \delta :\mathbf LP\to PT^\op$ 
are in bijection with natural transformations 
\begin{equation}\label{eq:hatdelta}
\widehat{\boldsymbol\delta} : \mathbf L\to PT^\opp S
\end{equation} 
and we can let $\widehat{\boldsymbol\delta}$ 
to be the identity on finitely generated free algebras.

\medskip\noindent  More explicitly, $\mathbf LA$ can be represented as the 
free Boolean algebra on the set
\[
\{
\sigma(a_1,\ldots a_n) 
\mid 
\sigma \in PT^\op SFn, a_i \in A, n<\omega
\}
\]
modulo appropriate axioms. The semantics 
$\boldsymbol \delta_X:\mathbf LPX\to PT^\op X$ is 
given by putting 
$$
\boldsymbol \delta\sigma(a_1,\ldots a_n)=PT^\op(\hat a)(\sigma)
$$ 
where $\hat a:X\to SFn$ is the adjoint
transpose of $(a_1,\ldots a_n):n\to UPX$, with $F\dashv U:\BA\to\Set$ being the familiar adjunction.

\medskip\noindent It is worth noting that the elements in $PT^\op SFn$ 
are, by the Yoneda Lemma, 
in one-to-one correspondence with natural transformations
$\Set(-,2^n)\to\Set(T-,2)$. The latter natural 
transformations are also known as 
{\em predicate liftings\/}~\cite{Pattinson03}. 
Hence we see that the logic $\mathbf L$ coincides with 
the logic of all predicate liftings 
of~\cite{Schroder05}, with the difference that the functor 
$\mathbf L$ also incorporates a complete set of axioms. The axioms are 
important to us as otherwise the natural transformation 
$\beta$ mentioned in the introduction, see~Equation~\eqref{eq:beta}, 
might not exist.

\medskip\noindent Of course, in concrete
examples one is often able to obtain much more succinct presentations:

\begin{proposition}\label{prop:mathbf L} For $T$ the powerset functor, the functor $\mathbf L$ defined by~Equation~\eqref{eq:LFn} is isomorphic to the functor $L$ of Example~\ref{exle:Lpcal}. 
\end{proposition}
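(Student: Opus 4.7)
The plan is to exploit the fact that both $\mathbf{L}$ and the functor $L$ of Example~\ref{exle:Lpcal} arise as left Kan extensions along the inclusion $\BA_\ff\hookrightarrow\BA$; by uniqueness of left Kan extensions it then suffices to produce a natural isomorphism between their restrictions to $\BA_\ff$, the full subcategory of finitely generated free Boolean algebras.

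First I would note that $\mathbf{L}$ is, by its very definition via Equation~\eqref{eq:LFn}, the left Kan extension along $\BA_\ff\hookrightarrow\BA$ of the functor $Fn\mapsto PT^\op SFn$. For the functor $L$ of Example~\ref{exle:Lpcal} I would invoke the characterisation recalled at the end of Section~\ref{sec:Presentations}: a $\BA$-endofunctor arises as a left Kan extension of its restriction to $\BA_\ff$ precisely when it has a presentation by operations and equations. Since $L$ admits the presentation consisting of the single unary operation $\Box$ together with the axioms~\eqref{equ:K}, this property holds for $L$ as well.

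Second I would identify the two restrictions on $\BA_\ff$. Since $SFn\cong 2^n$ is the finite set of ultrafilters of the free Boolean algebra on $n$ generators, one side is immediately $\mathbf{L}Fn = P\pcal^\op SFn = P\pcal^\op(2^n)$. On the other side, the one-step semantics $\delta_X:LPX\to P\pcal^\op X$ of Equation~\eqref{equ:delta} is an isomorphism whenever $X$ is finite, by the classical result of~\cite{Abramsky88} recorded in Example~\ref{exle:deltaLpcal}. Specialising to the finite set $X=SFn=2^n$ and combining with the finite Stone duality isomorphism $Fn\cong PSFn$, I obtain the chain
\begin{equation*}
LFn \;\cong\; LPSFn \;\stackrel{\delta_{SFn}}{\cong}\; P\pcal^\op SFn \;=\; \mathbf{L}Fn,
\end{equation*}
whose naturality in $n$ is inherited from that of $\delta$ and of the unit of the finite Stone equivalence.

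The main obstacle is really the fact that $\delta_X$ is an isomorphism for finite $X$, i.e.\ completeness of the modal axioms~\eqref{equ:K} with respect to finite Kripke semantics. This, however, is a classical result already cited in Example~\ref{exle:deltaLpcal}, so no fresh work is required here; once it is granted, the proposition follows formally from the uniqueness of left Kan extensions.
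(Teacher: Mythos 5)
Your proposal is correct and follows essentially the same route as the paper: the paper defines the transpose $\widehat{\delta}:L\to PT^\op S$ of $\delta$, observes that $\widehat{\delta}_{Fn}$ is an isomorphism because $\delta_X$ is an isomorphism for finite $X$ (Example~\ref{exle:deltaLpcal}), and concludes since both $L$ and $\mathbf L$ are determined by their action on finitely generated free algebras. Your chain $LFn\cong LPSFn\cong P\pcal^\op SFn$ is exactly this transpose spelled out, so the two arguments coincide.
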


\begin{proof}
In analogy with~Equation~\eqref{eq:hatdelta}, let  
$\widehat{\delta}:L \to PT^\op S$ be the transpose of 
$\delta: L P\to P \pcal ^\op$ as defined in~Equation~\eqref{equ:delta}. We know from Example~\ref{exle:deltaLpcal} that $\hat\delta_{Fn}:LFn \to PT^\op SFn =\mathbf LFn$ is an isomorphism. But as both $L$ and $\mathbf L$ are determined by their action on finitely generated free algebras, this extends to an isomorphisn $L\to \mathbf L$.
\end{proof}

\begin{remark}
The functor $\mathbf L$ is universal in the sense that any other finitary Boolean coalgebraic logic $L$ for $T$ is uniquely determined by the natural transformation $L\to\mathbf L$ constructed in the proof above. More formally, we can express this universality as follows: denote by $\mathsf{Sift}[\BA, \BA]$ the category of sifted-colimit-preserving functors from $\BA$ to $\BA$ and by $I$ the inclusion $\mathsf{Sift}[\BA, \BA]\to[\BA, \BA]$. Then $\widehat{\boldsymbol\delta}:\mathbf L\to PT^\op S$ as given in~Equation~\eqref{eq:hatdelta} is a final object in the comma category $I/PT^\op S$. 

Proposition~\ref{prop:mathbf L} can then be understood as saying that the logic defined by finality as above has a simple concrete presentation given by~Equations~\eqref{equ:K}-\eqref{equ:delta}.
\end{remark}

%========================================================%

\subsection{Outlook: Positive coalgebraic logic.} \label{subsec:DLlogic}

It is evident that, at least for some of the developments above, not only the functor $T$, but also the categories $\Set$ and $\BA$ can
be considered to be parameters. Accordingly, one expects that positive coalgebraic logic takes place over the category $\DL$ of (bounded) distributive lattices which in turn, is part of an adjunction $P':\Pos^\opp\to\DL$, taking upsets, and $S':\DL\to\Pos^\opp$, taking prime filters, or, equivalently, $P'X=\Pos(X,\Two)$ and $S'A=\DL(A,\Two)$ where $\Two$ is, as before, the two-element chain (now considered, depending on the context, either as a poset or as a distributive lattice). Consequently, the `natural semantics' of positive logics is `ordered Kripke frames', or coalgebras over posets. 
One side of this argument is formal: coalgebras over $\Pos$ are to logics over $\DL$ what coalgebras over $\Set$ are to logics over $\BA$. Another side of the argument goes as follows: to provide the semantics for a logic without negation, we need to distinguish between sets and their complements. This is most easily done by stipulating that the semantics of formulas is given by upward closed sets with respect to some order, as the complements then are downward closed sets.

\medskip\noindent Replaying the developments above with $\Pos$ and $\DL$ instead of $\Set$ and $\BA$, we may define a logic for $T'$-coalgebras, with $T':\Pos\to\Pos$, to be given by a natural transformation
\begin{equation}\label{eq:Ldelta'}
\boldsymbol \delta': \mathbf L'P'\to P'T'^\op
\end{equation}
where $\mathbf L':\DL \to \DL$ is a functor 
determined \emph{by its action on finitely discretely generated 
free distributive lattices}, namely
\begin{equation}\label{eq:L'F'n}
  \mathbf L'F'Dn = P'T'^\op S'F'Dn
\end{equation}
and $\boldsymbol\delta'$ is given by its transpose 
in the same way as in~Equation~\eqref{eq:hatdelta}. 
Here, $D:\Set\to \Pos$ denotes the functor equipping 
a set with the discrete order, 
and $F':\Pos \to \DL$ is the functor mapping a poset to the free distributive lattice on it. 

\begin{example}\label{exle:L'pcal}

Given a poset $X$, a subset $Y\subseteq X$ is called \emph{convex} if $y\leq y'\leq y''$ and $y,y''\in Y$ imply $y'\in Y$. The 
{\em convex powerset functor\/} 
$\pcal ':\Pos\to \Pos$ maps a poset to the set of its convex subsets, ordered by the Egli-Milner order, and a monotone map to its direct image. Let now $L':\DL\to\DL$ be the functor mapping a distributive lattice $A$ to the distributive lattice $ L'A$ generated by $\Box a$ and $\Diamond a$ for all $a \in A$, and subject to the relations stipulating that $\Box$ preserves finite meets, $\Diamond$ preserves finite 
joins, and that the inequalities
\begin{equation*}\label{equ:PML}
\Box a \wedge \Diamond b \le \Diamond(a\wedge b) \quad\quad\quad 
\Box(a\vee b)\le \Diamond a \vee \Box b
\end{equation*}
hold.

The natural transformation $\delta'_X:L'P' X\to P' {\pcal'}^\op X$ is 
defined by putting
\begin{equation*}\label{equ:diamonddelta}
  \Diamond a \mapsto \{b\subseteq X\mid \mbox{ $b$ is a convex subset of $X$ and } b\cap a\not=\emptyset \}, \mbox{ for $a\in P'X$,}
\end{equation*} 
the clause for $\Box a$ being the same as in~Equation~\eqref{equ:delta}. 
\end{example}

\begin{remark}
$\Alg( L')$ is the category of positive modal algebras of 
Dunn~\cite{dunn:pml}. We shall later see in Corollary~\ref{cor:L'isboldL'} that it is isomorphic to $\Alg(\mathbf L')$. We have again that for a finite poset $X$, $\delta'_X$ is
an isomorphism, a representation first stated in~\cite{johnstone:stone-spaces,johnstone:vietoris-locales}, the connection with modal logic being given 
by~\cite{Abramsky88,robinson:powerdomain,winskel:powerdomain} and investigated from a coalgebraic point of view in~\cite{palmigiano:cmcs03-j}. As opposed to \cite{palmigiano:cmcs03-j}, we take the set-theoretic semantics of modal logic as fundamental.
We therefore do not have to use the topological semantics based on Stone or Priestley duality: all we need is contained in the adjunctions $S\dashv P:\Set^\op\to\BA$ and $S'\dashv P':\Pos^\op\to\DL$. 
\end{remark}

%========================================================%

\subsection{Outlook: Coalgebraic logic enriched over $\Pos$} 

Moving from ordinary categories to categories enriched over 
$\Pos$ plays a major role in this paper. 
The reason is that enrichment over $\Pos$ takes 
automatically care of the fact that positive modal logics extend 
the logic of distributive lattices by \emph{monotone} modal 
operations. This is crucial from the point of view of our main 
application, namely positive modal logic.  

\medskip\noindent Throughout the paper, however, we shall encounter many 
more reasons why to move to an enriched setting.
Some of the reasons are the following.
\begin{enumerate}
\item The category $\Pos$ is the cocompletion under \emph{enriched} sifted colimits of the category of finite sets.
\item The posetification, to be locally monotone, must be defined as an \emph{enriched} left Kan extension. 
\item Among all functors on $\Pos$, posetifications are characterized by `coinserters of truncated nerves' where a coinserter is the enriched analogue of a coequalizer.
\item In the ordered setting, one is frequently interested in definability by inequations ($\le$) instead of definability by equations: quotienting by inequations corresponds to taking a coinserter instead of a coequalizer.
\item Having a presentation by \emph{monotone} operations and equations in \emph{discrete arities} is characterized by preservation of \emph{enriched} sifted colimits.
\end{enumerate}

\section{A review of $\Pos$-enriched categories}
  
\medskip\noindent
Below we recall some notions of enriched category theory needed in the sequel. Most of this section is rather technical, but we have decided to include it in order to keep the paper self-contained. However, for more details, we refer the reader to Kelly's monograph~\cite{kelly:enriched}.

%========================================================%

\subsection{The category $\Pos$ of posets and monotone maps}  

The category $\Pos$ has partial orders (posets) as objects and monotone maps as arrows. $\Pos$ is complete and cocomplete (even locally finitely
presentable~\cite{adamekrosicky}). Limits are computed as in $\Set$, with the order on the limit being the largest relation making the maps in the cocone monotone.
Colimits are easiest to compute in two steps. First, colimits in the category of preorders are computed as in $\Set$, with the preorder on the colimit being the smallest one making the maps in the cocone monotone. Second, one quotients the preorder by anti-symmetry in order to obtain a poset (directed colimits, however, are computed as in $\Set$, see~\cite{adamekrosicky}). $\Pos$ is also cartesian closed, with the internal hom $[X,Y]$ being the poset of monotone maps from $X$ to $Y$, ordered pointwise.

%========================================================%

\subsection{$\Pos$-enriched categories}\label{sec:pos}

We shall consider categories {\em enriched\/} in $\Pos$. 
Thus, a $\Pos$-enriched category $\K$ is a category with ordered homsets, such that composition is monotone in both arguments:
$g\circ f\leq k\circ h$ whenever $g\leq k$ and $f\leq h$; a $\Pos$-enriched functor $T:\K\to \mathscr L$ is a 
{\em locally monotone\/} functor, 
that is, it preserves the order on the homsets: $f\leq g$ implies $Tf\leq Tg$. 
A $\Pos$-natural transformation between locally monotone functors is just a natural transformation, the extra condition of enriched naturality being vacuous here. 
Consequently, a $\Pos$-adjunction between two $\Pos$-categories is just an ordinary adjunction between two locally monotone functors.
The category of $\Pos$-enriched functors from $\K$ to $\mathscr L$ and natural transformations between them will be denoted by 
$[\K, \mathscr L]$. The {\em opposite category\/} 
$\K^\mathsf{op}$ of $\K$ has just the sense of morphisms reversed, the order on hom-posets remains unchanged. 
To avoid overloaded notation, we shall mostly employ the same notation for a $\Pos$-category $\K$ and its 
{\em underlying ordinary category\/}, unless it is necessary 
to emphasize the distinction between them. In such case, the ordinary category will be denoted by $\K_o$, and we proceed similarly for $\Pos$-functors. 

\medskip\noindent Besides $\Pos$ itself, an example of a $\Pos$-enriched category is $\Set$, the category of sets and functions, considered discretely enriched. In the chain 
$$
C\dashv D\dashv V:\Pos\rightarrow \Set
$$
of adjunctions  between the connected components functor, the discrete functor and the forgetful one, only the adjunction 
$C\dashv D:\Set\to\Pos$ is enriched; in particular the discrete functor $D:\Set\to \Pos$ is locally monotone, while the forgetful functor $V:\Pos \to \Set$ fails to be so. 

\medskip\noindent Observe also that, due to the discrete enrichment, 
{\em any\/} functor $\Set\to \Set$ is automatically locally monotone.

%========================================================%

\subsection{Weighted (co)limits; coinserters; Kan-extensions} 
\label{sec:3.C}
Recall from~\cite{kelly:enriched} that the proper concepts of limits 
and colimits in enriched category theory are those of {\em weighted\/}
(co)limits. Specifically, the {\em colimit\/} of a $\Pos$ functor 
$G:\mathscr D \to \K$ weighted by a $\Pos$-functor 
$\varphi:\mathcal {\mathscr{D}}^\mathsf{op} \to \Pos$ 
is an object $\varphi * G$ in $\K$, 
together with an isomorphism 
\[
\K(\varphi *G, X) 
\cong 
[{\mathscr{D}}^\op, \Pos](\varphi, \K(G-, X))
\] 
of posets, natural in $X$ in $\K$. 

\medskip\noindent Dually, a {\em limit\/} of $G:\mathscr D\to \K$ 
weighted by $\varphi:\mathscr D \to \Pos$ 
is an object $\{\varphi, G\}$ in $\K$, 
together with an isomorphism 
\[
\K(X, \{\varphi,G\}) 
\cong 
[\mathscr D, \Pos](\varphi,\K(X, G-))
\] 
of posets, again natural in $X$ in $\K$. 

\medskip\noindent One example of weighted colimits are copowers that arise from constant weights and diagrams. 
Specifically, the {\em copower\/} 
$
P\bullet X
$ 
of a poset $P$ in $\Pos$ and an object $X$ in $\K$ 
is characterized by the natural isomorphism 
$$
\K(P \bullet X, -) \cong \Pos(P, \K(X, -))
$$ 

\medskip\noindent Another example of weighted (co)limit that will 
later appear in the paper is the (co)inserter:

\begin{example}\cite{kelly:elem}
A \emph{coinserter} is a colimit that has weight 
$\varphi:\mathscr{D}^\op \to \Pos$, where $\mathscr D$ 
is the category
\[
\xymatrix{\cdot \ar@<0.5ex>[r] \ar@<-0.5ex>[r] & \cdot}
\] 
The category $\mathscr{D}$
gets mapped by $\varphi$ to the parallel pair 
\[
\xymatrix{\mathbbm 2 &\ar@<-0.5ex>[l]_1 \ar@<0.5ex>[l]^0 \mathbbm 1}
\]
in $\Pos$, with arrow $0$ mapping to $0\in\mathbbm 2$ and arrow $1$ mapping to $1\in\mathbbm 2$ (recall that $\mathbbm 2$ is the poset  $\{0\leq 1\}$). A functor $G$ from $\mathscr D$ 
to a $\Pos$-category $\K$ corresponds to a parallel 
pair $d^0, d^1: X\rightrightarrows Y $ of arrows in $\K$. 

In more detail, the coinserter of $d^0, d^1$ consists 
of an object $\mathsf{coins}(d^0,d^1)$ and an arrow 
$c:Y\to \mathsf{coins}(d^0,d^1)$, with $c \circ d^0\leq c \circ d^1$.
The object and the arrow have to satisfy the universal
property, of course. Due to the enrichment in posets,
the universal property has two aspects:
\begin{enumerate}
\item 
The $1$-dimensional aspect of universality asserts that, given 
any  $q:Y\to Z$ with $q \circ d^0\leq q \circ d^1$, there is 
a unique $h:\mathsf{coins}(d^0,d^1)\to Z$ with $h\circ c=q$.
\item
The $2$-dimensional aspect of universality ensures that 
the above assignment $q\mapsto h$ is monotone. That is, 
given $q,q':Y\to Z$ with $q\leq q'$, $q\circ d^0\leq q\circ d^1$ 
and $q'\circ d^0\leq q'\circ d^1$, the corresponding unique arrows $h,h':\mathsf{coins}(d^0,d^1)\to Z$ satisfy $ h\leq h'$.
\end{enumerate}

\[
\xymatrix@C=30pt@R=10pt{& Y \ar@{}[dd]|{\downarrow} \ar[dr]^-{c} \ar@/^2ex/[drrr]^q& \\ \qquad X \ \ar[ur]^{d^0} \ar[dr]_{d^1} && \mathsf{coins}(d^0,d^1) \ar@{..>}[rr]|-h && Z\\ & Y \ar[ur]_-{c} \ar@/_2ex/ [urrr]_q } 
\]

\noindent The coinserter is called \emph{reflexive} if $d^0$ and $d^1$ have a common right inverse $i:Y\to X$; that is, if the 
equalities $d^0\circ i= d^1 \circ i = \id_Y$ hold. 

By reversing the direction of the arrows, one obtains the dual notion of a (coreflexive) {\em inserter\/}. 
\end{example}

\medskip\noindent We shall later use also {\em co-comma objects\/}, which generalize coinserters, in the sense that this time $\mathscr D$ is 
\[
\xymatrix{
\cdot
&
\cdot 
\ar[r]
\ar[l]
&
\cdot
}
\] and is mapped by $\varphi$ to
\[
\xymatrix{
\mathbbm 1
&
\mathbbm 2
\ar[r]^1
\ar[l]_0
&
\mathbbm 1
}
\]
while a functor $G:\mathscr D \to \mathscr K$ corresponds to a pair of arrows with common domain $d^0:X \to Y$, $d^1:X \to Z$. More in detail, the co-comma object of $d^0$ and $d^1$ consists of an object $\mathsf{cocomma}(d^0,d^1)$, together with arrows $p:Y\to \mathsf{cocomma}(d^0,d^1)$, $q:Z \to  \mathsf{cocomma}(d^0,d^1)$, such that $p\circ d^0 \leq q \circ d^1$, as in the diagram below. We leave to the reader the explicit description of its universal property.
\[
\xymatrix{
X \ar[r]^{d^0} \ar[d]_{d^1} & Y \ar[d]^p \ar@{}[dl]|{\swarrow} \\
Z \ar[r]_-q &  \mathsf{cocomma}(d^0,d^1) }
\]
The dual notion is called the {\em comma object\/}.

\begin{remark}\label{rem:construction_coinserter} 
Informally speaking, whereas coequalizers are well-known to 
take quotients with respect to equivalence relations,
{\em coinserters take quotients with respect to preorders\/}. 

For later use, we recall how coinserters are built in $\Pos$. 
For a pair of monotone maps $d^0,d^1:X\to Y$, define first 
a binary relation $\mathbf r$ on the underlying set of the 
poset $Y$ as follows: given $y,y'\in Y$, say that $y\ \mathbf r\ y'$ if there are $x_0, \ldots , x_n\in X$ such that
\[
\xymatrix@C=7pt{& & x_0 \ar[dl]_{d^0} \ar[dr]^{d^1} &&&\ar[dl]_{d^0} \ar[dr]^{d^1} x_1 &&\ldots && x_n \ar[dl]_{d^0} \ar[dr]^{d^1} & & \\
y \ar@{}[r]|-{\leq} & d^0(x_0)  &&d^1(x_0)\ar@{}[r]|{\leq} &d^0(x_1)&&d^1(x_1) \ar@{}[r]|-{\leq} & \ldots \ar@{}[r]|-{\leq} & d^0(x_n) && d^1(x_n)& y' \ar@{}[l]|-{\leq} }
\]
It is easy to see that $\mathbf r$ is a reflexive and transitive relation, thus a preorder on $Y$. Then the coinserter of $d^0$ and $d^1$ is the quotient of $Y$ with respect to the equivalence relation induced by $\mathbf r$, with order $[y]\leq [y']$ if and only if $y \ \mathbf r\ y'$. 
\end{remark}

\medskip\noindent Reflexive coinserters play a similar role to reflexive coequalizers in ordinary category theory. For example, one can prove the following $\Pos$-enriched version of~\cite[Lemma~0.17]{johnstone:topos-theory} (see also~\cite[Section~2]{kelly-lack-walters:coinv}). 

\begin{lemma}($3\times 3$ lemma for coinserters) %\label{3x3coinserter}
Consider in a $\Pos$-category $\mathscr C$ the diagram 
\[
\xymatrix@R=35pt@C=35pt{X_1 \ar@<0.5ex>[r]^{f_1} \ar@<-0.5ex>[r]_{f_2} \ar@<0.5ex>[d]^{a_2} \ar@<-0.5ex>[d]_{a_1} & X_2 \ar@<0.5ex>[d]^{b_2} \ar@<-0.5ex>[d]_{b_1} \ar[r]^{f_3} & X_3 \ar@{..>}@<0.5ex>[d]^{c_2} \ar@{..>}@<-0.5ex>[d]_{c_1} \\
Y_1 \ar@<0.5ex>[r]^{g_1} \ar@<-0.5ex>[r]_{g_2} \ar[d]_{a_3} & Y_2 \ar[r]^{g_3} \ar[d]_{b_3} & Y_3 \ar@{..>}[d]_{c_3} 
\\
Z_1\ar@{..>}@<0.5ex>[r]^{h_1} \ar@{..>}@<-0.5ex>[r]_{h_2} & Z_2 \ar@{..>}[r]^{h_3} & Z_3 }
\]
where 
\begin{enumerate}
\item 
The first two rows and columns are coinserters.
\item The equalities below hold: 
\begin{equation*}\label{commut}
b_i \circ f_j = g_j \circ a_i \ (\ i,j=1,2\ ) \quad 
\end{equation*}
\end{enumerate}
These induce the arrows $c_1, c_2, h_1, h_2$ in an obvious way. Finally, let $h_3$ be the coinserter (assuming it exists in $\mathscr C$) of $h_1$ and $h_2$, and denote by $c_3:Y_3\to Z_3$ the induced unique mediating arrow. Then:
\begin{enumerate}
\item The last column is also a coinserter.

\item If additionally the first row and columns are reflexive coinserters, then the diagonal 
\[
\xymatrix@C=35pt{X_1 \ar@<0.35ex>[r]^{b_1\circ f_1%=g_1\circ a_1
}  \ar@<-0.35ex>[r]_{b_2\circ f_2%=g_2\circ a_2
} & Y_2 \ar[r]^{h_3\circ b_3% = h_3 \circ b_3
} & Z_3 }
\]
is again a coinserter, which is reflexive if the second row (column) is again a reflexive coinserter. 
\item Reflexivity of the first two rows and columns imply reflexivity of the remaining third row and column. 
\end{enumerate}
\end{lemma}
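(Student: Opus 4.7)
The plan is to prove all three parts by direct diagram chases invoking the 1- and 2-dimensional aspects of the universal property of each given coinserter, and combining them along the commutativities $b_i f_j = g_j a_i$. The one technical fact I will use throughout is that every coinserter map $c : Y \to \mathsf{coins}(d^0,d^1)$ is \emph{order-reflecting}: if $p\circ c \leq q\circ c$ then $p\leq q$. This is immediate from the 2-dimensional universal property applied to $p\circ c$ and $q\circ c$ (each of which trivially satisfies the coinserter condition), whose unique mediators must be $p$ and $q$ themselves.

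For (1), given $q : Y_3\to W$ with $q c_1 \leq q c_2$, I precompose with $f_3$ and use $c_i \circ f_3 = g_3 \circ b_i$ to obtain $q g_3 b_1 \leq q g_3 b_2$, so the universal property of $b_3$ yields a unique $r : Z_2 \to W$ with $r b_3 = q g_3$. Precomposing with $a_3$ and using $h_i\circ a_3 = b_3 \circ g_i$ gives $r h_i a_3 = q g_3 g_i$, and the coinserter inequality $g_3 g_1 \leq g_3 g_2$ then produces $r h_1 a_3 \leq r h_2 a_3$; order-reflection of $a_3$ yields $r h_1 \leq r h_2$, so the universal property of $h_3$ delivers a unique $k : Z_3 \to W$ with $k h_3 = r$. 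From $k c_3 g_3 = k h_3 b_3 = r b_3 = q g_3$ and epi-ness of $g_3$ I conclude $k c_3 = q$; uniqueness and monotonicity of $k$ in $q$ follow by the same chain of epi-ness and 2-dimensional universality.

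For (2), the required inequality $h_3 b_3 (b_1 f_1) \leq h_3 b_3 (b_2 f_2)$ reduces, via the commutativities, to the chain $h_3 h_1 a_3 a_1 \leq h_3 h_1 a_3 a_2 \leq h_3 h_2 a_3 a_2$ obtained by combining $a_3 a_1 \leq a_3 a_2$ with $h_3 h_1 \leq h_3 h_2$. For the universal property, given $t : Y_2 \to W$ with $t b_1 f_1 \leq t b_2 f_2$, the common section $i$ of $f_1, f_2$ yields $t b_1 = t b_1 f_1 i \leq t b_2 f_2 i = t b_2$, so $t$ factors through $b_3$ as some $\bar t$; the common section $j$ of $a_1, a_2$ similarly yields $t g_1 \leq t g_2$, and via $\bar t h_i a_3 = t g_i$ together with order-reflection of $a_3$ this becomes $\bar t h_1\leq \bar t h_2$, so $\bar t$ factors through $h_3$ as the required mediator. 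Reflexivity of the diagonal is then witnessed by $s := j\circ i' : Y_2 \to X_1$ when the second row is reflexive with common section $i'$ of $g_1,g_2$, since $b_k f_k s = g_k a_k j i' = g_k i' = \id_{Y_2}$; the symmetric case (second column reflexive) uses $s := i\circ j'$.

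For (3), by the symmetry of the hypotheses in rows and columns it suffices to construct a common section $k : Y_3\to X_3$ of $c_1, c_2$. The strategy is to define $k$ as the mediator produced by the universal property of the coinserter $g_3$ applied to an arrow $\phi : Y_2\to X_3$ assembled from the available sections $i, j, i', j'$ in such a way that $c_\ell\circ \phi = g_3$ holds for both $\ell=1,2$; the identities $c_\ell\circ k = \id_{Y_3}$ then follow from $k\circ g_3 = \phi$ by epi-ness of $g_3$. The subtle point---and the main obstacle of the proof---is to establish the inequality $\phi\circ g_1 \leq \phi\circ g_2$ needed to invoke the universal property of $g_3$; this is precisely where all four reflexivities must be combined, paralleling the analogous step in the classical reflexive $3\times 3$ lemma of~\cite{johnstone:topos-theory}.
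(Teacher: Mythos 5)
Your treatment of parts (1) and (2) is correct and follows essentially the same route as the paper: the paper also factors the mediating map first through $b_3$ and then through $h_3$, and invokes the $2$-dimensional universal property of $a_3$ exactly where you invoke ``order-reflection'' of $a_3$ (your observation that a coinserter map $c$ satisfies ``$p\circ c\le q\circ c$ implies $p\le q$'' is a clean and valid repackaging of that $2$-dimensional aspect). You in fact supply details the paper leaves to the reader, namely the cone inequality for the diagonal and the reflexivity witnesses $j\circ i'$ and $i\circ j'$, both of which check out. One small omission in part (1): you never verify the cone inequality $c_3\circ c_1\le c_3\circ c_2$ for the third column; it follows in one line from $c_3\circ c_\ell\circ f_3=h_3\circ b_3\circ b_\ell$, the inequality $b_3\circ b_1\le b_3\circ b_2$, and order-reflection of $f_3$, but it should be stated.

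Part (3), however, is not a proof. You announce that $k$ will be obtained from the universal property of $g_3$ applied to some $\phi\colon Y_2\to X_3$ with $c_\ell\circ\phi=g_3$, but you never define $\phi$, and you explicitly leave open the inequality $\phi\circ g_1\le\phi\circ g_2$ on which the whole construction hinges. The only candidate the data provides is $\phi=f_3\circ j'$, where $j'$ is the common section of $b_1,b_2$; this does satisfy $c_\ell\circ f_3\circ j'=g_3\circ b_\ell\circ j'=g_3$, but the needed inequality $f_3\circ j'\circ g_1\le f_3\circ j'\circ g_2$ does not follow in any obvious way from the coinserter inequalities and reflexivity identities at hand --- the two maps $j'\circ g_\ell$ and $f_\ell\circ j\colon Y_1\to X_2$ merely have equal composites with $b_1$ and with $b_2$, which gives no purchase on them after postcomposition with $f_3$. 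So the ``main obstacle'' you flag is genuinely unresolved in your write-up. To be fair, the paper's own proof is no better on this point: it ends by leaving the $2$-dimensional aspect of part (2) and ``the assertions on reflexivity'' to the reader, so there is no argument for part (3) against which to compare yours. Still, as submitted, your part (3) is a statement of intent rather than a proof, and either the missing inequality must be derived or a different construction of the section found.
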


\begin{proof}
To see that $c_3$ is a coinserter, use first the 2-dimensional aspect of the coinserter $(X_3,f_3)$ to conclude $c_3 \circ c_1\leq c_3\circ c_2$. Next, given $w_1:Y_3\to W$ with $w_1\circ c_1\leq w_1\circ c_2$, it induces an arrow $w_2:Z_2\to W$ such that $w_2 \circ b_3 = w_1 \circ g_3$. Then the 2-dimensional part of the coinserter $(Z_1, a_3)$ yields $w_2\circ h_1 \leq w_2 \circ h_2$, thus it induces an arrow $w_3:Z_3\to W$ with $w_3 \circ h_3=w_2$. We have that 
\[w_3\circ c_3 \circ g_3= w_3 \circ h_3 \circ b_3 = w_2 \circ b_3 = w_1 \circ g_3
\]
and using that $g_3$ is an epimorphism we conclude $w_3 \circ c_3=w_1$. Finally, if $w_1, \bar w_1:Y_3\to W$ are such that $w_1\leq \bar w_1 $, $w_1\circ c_1\leq w_1\circ c_2$ and $\bar w_1\circ c_1\leq \bar w_1\circ c_2$, then successively we obtain $w_2 \leq \bar w_2$ and $w_3 \leq \bar w_2$ by using the 2-dimensional aspect of coinserters $(Z_2,b_3)$, respectively $(Z_3, h_3)$.

For the second part, denote by $i:X_2\to X_1$ and $j:Y_1 \to X_1$ the common right inverses of the parallel pairs of morphisms $f_1, f_2$, respectively $a_1, a_2$. Notice then that for an arrow $u_1:Y_2 \to U$ such that $u_1 \circ b_1 \circ f_1 \leq u_1 \circ b_2 \circ f_2$, precomposition with $i$ induces the inequality $u_1 \circ b_1 \leq u_1 \circ b_2$, thus we can find an arrow $u_2: Z_2 \to U$ with $u_2 \circ b_3= u_1$. In order to see that $u_2 \circ h_1 \leq u_2 \circ h_2$, use the first that precomposing 
\[
u_1 \circ g_1 \circ a_1 = u_1 \circ b_1 \circ f_1 \leq u_1 \circ b_2 \circ f_2= u_1 \circ g_2 \circ a_2
\] 
with $j$ yields $u_1 \circ g_1 \leq u_1 \circ g_2$, and next use the 2-dimensional aspect of the coinserter $(Z_1, a_3)$. From $u_2 \circ h_1 \leq u_2\circ  h_2$ we see that there is an arrow $u_3:Z_3\to U$ with $u_3 \circ h_3 = u_2$, thus $u_3 \circ h_3 \circ  b_3 = u_2 \circ b_3 = u_1$. The remaining 2-dimensional aspect of the requested coinserter can be easily proved along these lines, and we leave it to the reader, as well as the assertions on reflexivity.
\end{proof}

\medskip\noindent The importance of reflexive coinserters for us stems from the fact that each poset can be canonically expressed as a reflexive coinserter of \emph{discrete} posets:

\begin{proposition}\label{poset=refl_coins_discrete}
Let $X$ be a poset. Denote by $X_0$ its underlying \emph{set}, and by $X_1$ the \emph{set}
of all comparable pairs, $X_1=\{ (x,x')\in X \mid x\leq x'\}$. Let 
$d^0, d^1:X_1\to X_0$ be the maps $d^0(x,x')=x$, $d^1(x,x')=x'$, with common right inverse $i:X_0\to X_1$, $i(x)=(x,x)$. Then the obvious (monotone) map $c:DX_0\to X$, $c(x)=x$, exhibits $X$ as the coinserter in $\Pos$ of the reflexive pair of discrete posets (also called the \emph{truncated nerve} of the poset)
\begin{equation}
\label{eq:poset_coins}\tag{$N_X$}
\xymatrix@C=35pt{DX_1 \ar@{<-}`d[r]`r[r]_{Di} \ar@<0.5ex>[r]^{Dd^0} \ar@<-0.5ex>[r]_{Dd^1} & DX_0 \ar[r]^-c & X}
\end{equation} 
\end{proposition}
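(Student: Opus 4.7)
My plan is to verify directly the two parts of the universal property of a coinserter in $\Pos$, exploiting the fact that $c:DX_0 \to X$ is the identity on the underlying set.

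First, I would observe that the pair $(Dd^0,Dd^1)$ is indeed reflexive, since $d^0 \circ i = d^1 \circ i = \id_{X_0}$ in $\Set$ and $D$ preserves identities; and that $c\circ Dd^0 \leq c\circ Dd^1$ holds because for each $(x,x')\in X_1$ we have $c(Dd^0(x,x')) = x \leq x' = c(Dd^1(x,x'))$ in $X$. This shows the candidate forms a valid cocone under the weight.

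For the 1-dimensional universal property, let $q:DX_0\to Z$ be a monotone map into any poset $Z$ with $q\circ Dd^0 \leq q\circ Dd^1$. Because $DX_0$ is discrete, $q$ is just an arbitrary function from the underlying set $X_0$ to $Z$. Since $c$ is a bijection on underlying sets, there is a unique \emph{function} $h:X\to Z$ with $h\circ c = q$, namely $h(x) = q(x)$. Monotonicity of $h$ follows from the cocone inequality: given $x\leq x'$ in $X$ the pair $(x,x')$ lies in $X_1$, and evaluating $q\circ Dd^0 \leq q\circ Dd^1$ at $(x,x')$ yields $h(x) = q(x) \leq q(x') = h(x')$. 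Uniqueness of $h$ in $\Pos$ follows from uniqueness at the level of underlying sets, since $c$ is surjective.

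For the 2-dimensional universal property, suppose $q,q':DX_0 \to Z$ satisfy the cocone condition and $q \leq q'$ pointwise. The induced mediators $h,h':X\to Z$ then satisfy $h(x) = q(x) \leq q'(x) = h'(x)$ for every $x$, hence $h\leq h'$ in $[X,Z]$. This gives the desired monotonicity of the assignment $q\mapsto h$, completing the verification that \eqref{eq:poset_coins} is a coinserter. There is no real obstacle here; the argument is essentially a repackaging of the fact that a monotone map out of $X$ is the same thing as a function on the underlying set that respects the order relation $\{(x,x'):x\leq x'\} = X_1$, and this is exactly what the coinserter condition $q\circ Dd^0 \leq q\circ Dd^1$ encodes.
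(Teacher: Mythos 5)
Your proof is correct and complete; the paper itself leaves this verification to the reader as "straightforward details," and your direct check of the cocone inequality together with the 1- and 2-dimensional aspects of the universal property is exactly the intended argument. The key observation — that a monotone map out of $X$ is precisely a function on $X_0$ satisfying $q\circ Dd^0\leq q\circ Dd^1$ — is the heart of the matter and you have stated it explicitly.
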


\begin{proof}
We leave to the reader to check the straightforward details.  
\end{proof}

\medskip

\begin{definition}[\textbf{Kan extension}]
\label{def:Lan}
Let $J:\mathscr A\to \K$, 
$H:\mathscr A \to \mathscr L$ be locally monotone functors. 
A $\Pos$-enriched \emph{left Kan extension of $H$ along $J$}, is a locally monotone functor $\Lan_J H:\K\to \mathscr L$, 
such that there is a $\Pos$-natural isomorphism 
\[
(\Lan_J H) X \cong \K(J-, X) * H
\] 
for each $X$ in $\K$.
\end{definition}

\begin{remark}
\label{rem:lan}
\mbox{}\hfill
\begin{enumerate}
\item 
\label{rem:weak_def_lan}
For any locally monotone functor 
$H':\K\to \mathscr L$, there is an isomorphism
\begin{equation}\label{iso_lan}
[\K, \mathscr L](\Lan_J H, H')
\cong 
[\mathscr A, \mathscr L](H, H' J)
\end{equation}
in analogy to the case of unenriched left Kan extensions. 
In particular, there is a $\Pos$-natural transformation 
$\alpha:H\to (\Lan_J H ) J$, 
called {\em unit\/} of the left Kan extension,
which is {\em universal\/} 
in the sense that for a locally monotone functor 
$H':\K \to \mathscr L$, any $\Pos$-natural 
transformation $H\to H' J$ factorises through 
$\alpha$. 

\medskip\noindent In the general enriched setting, requiring 
the isomorphism~\eqref{iso_lan} 
is \emph{strictly weaker} than Definition~\ref{def:Lan}, 
but if $\mathscr L$ is 
\emph{powered}~(also called \emph{cotensored},
see~\cite[Section~3.7]{kelly:enriched}), 
it can however be taken as an alternative definition of 
left Kan extensions (see the discussion after Equation~(4.45) 
in~\cite{kelly:enriched}).
\item 
Suppose $J:\mathscr A \to \K$ is fully faithful. 
Then the unit $\alpha:H \to ( \Lan_J H )J$  
of the left Kan extension is an 
isomorphism~\cite[Proposition~4.23]{kelly:enriched}. 
\item 
By general enriched category theory, the $\Pos$-enriched 
left Kan extension $\Lan_J H$ exists whenever 
$\mathscr A$  is small and $\mathscr L$ is cocomplete. 
But it might exist even when $\mathscr A$ is not small, 
as we shall see later in a special case
(Theorem~\ref{thm:posetification}).
\item 
From~Equation~\eqref{iso_lan} it follows that any locally 
monotone left adjoint 
$Q : \mathscr L \to \mathscr L'$ preserves 
the $\Pos$-enriched left Kan extension $\Lan_J H$, 
in the sense that the canonical natural transformation
$$
\Lan_J (QH) \to Q\ \Lan_J H
$$
is an isomorphism.
\end{enumerate}
\end{remark}

\begin{example}
Recall that we have denoted by $D\dashv V:\Pos\to\Set$ 
the (ordinary, i.e., not locally monotone) 
adjunction between the  discrete and the forgetful functors, 
and by $\mathcal P:\Set\to\Set$ the powerset functor. Then 
the $\Pos$-enriched left Kan extension of $D\mathcal P$ along $D$ 
is the convex powerset functor (see~\cite{kurz+velebil:presentations}, but also Example~\ref{ex:posetif_powerset}). On the other hand, the {\em ordinary\/} left Kan extension of 
$D\mathcal P$ along $D$ is the composite $D\mathcal PV$, 
which is less interesting, as it maps any poset to 
the discrete poset of its subsets. 
\end{example}

%========================================================%

\subsection{Ordered varieties}

We have seen in Section~\ref{sec:coalgebras} a close interplay between modal logic and varieties of algebras. The theory of (locally monotone) $\Pos$-functors and their logics of monotone modal operators naturally leads to the world of ordered varieties, as defined by Bloom and Wright in~\cite{bloom+wright}.

\medskip\noindent More precisely, recall that a (finitary) {\em signature\/} 
$\Sigma$ associates to each natural number $n$ 
a \emph{set} of operation symbols $\Sigma_n$ of arity $n$. 
A {\em $\Sigma$-algebra\/} consists of a \emph{poset} 
$A$ and, for each $\sigma\in\Sigma_n$, a \emph{monotone} 
operation $\sigma_A: A^n\to A$. An ordered variety is specified 
by a signature $\Sigma$ and a set of inequations. 
Bloom~\cite{bloom} proved that varieties are precisely the 
HSP-closed subclasses of algebras for a signature, 
provided that we understand closure under H as 
the closure under surjective homomorphisms and 
the closure under S as closure under {\em embeddings\/} 
(injective and order-reflecting homomorphisms). 

\medskip\noindent The structure theory of ordered varieties is similar to 
the one for ordinary varieties. For more details we refer 
the reader to the original~\cite{bloom+wright} and to the 
more recent paper~\cite{kurz+velebil:varieties}.

\begin{example}
The category $\DL$ of distributive lattices is a variety over $\Pos$ if we take algebras to be ordered in the lattice order: 
$$
a\le b \Leftrightarrow a\wedge b=a
$$
The category $\BA$ of Boolean algebras is a variety over $\Pos$ if we take Boolean algebras to be discretely ordered.
\end{example}

\medskip\noindent Notice that Boolean algebras can \emph{only} be discretely ordered, because of the requirement that operations of ordered algebras should be monotone. In the case of Boolean algebras it is not hard to show that the discrete order is the only one that makes all operations (including negation) monotone (see Section~\ref{discreteBA} below). 

%========================================================%

\subsection{Sifted weights and sifted colimits; strongly 
finitary functors}\label{sec:sifted} 

There is a well-known result that a finitary $\Set$-endofunctor also preserves sifted colimits, or equivalently, filtered colimits and reflexive coequalizers~\cite[Corollary~6.30]{ARV}. Below we sketch the corresponding $\Pos$-enriched theory 
(for more details, we refer to~\cite{bourke,kelly-lack,kurz+velebil:varieties,lack-rosicky:lawvere}). 
A weight $\varphi:\mathscr D^\op\to \Pos$ is called 
\emph{sifted} if finite products commute with $\varphi$-colimits 
in $\Pos$~\cite{kelly-lack}. 
Equivalently, if the 2-functor 
$\varphi*-:[\mathscr D, \Pos]\to \Pos$ preserves $\Pos$-enriched finite products. 
A \emph{sifted colimit} is a colimit weighted by a sifted weight.
Examples of sifted colimits are filtered colimits and 
reflexive coequalizers, but also reflexive coinserters
(see~\cite{bourke}). 

\medskip\noindent There is a close interplay between (ordered) varieties and (enriched) sifted colimits, see also Section~\ref{ak}. For now, remember that in the non-enriched setting, a functor on a variety preserves ordinary sifted colimits if and only if it preserves filtered colimits and reflexive 
coequalizers~\cite{kurz-rosicky:strong}. In the $\Pos$-enriched setting, a locally monotone functor on an ordered variety preserves (enriched) sifted colimits if and only if it preserves filtered colimits and reflexive coinserters~\cite[Proposition~6.8]{kurz+velebil:varieties}.

\medskip\noindent Let $\Set_f$ be the category of finite sets and maps, and  let 
$\iota$ denote the inclusion 
$$
\iota\equiv\Set_f\overset{I}{\hookrightarrow} \Set\overset{D}{\to}\Pos
$$ 
In~\cite{kurz+velebil:varieties}, following~\cite[Theorem~8.4]{bourke}, it was noticed that $\Pos$ is the {\em free cocompletion\/} 
of $\Set_f$ under enriched sifted colimits.\footnote{In general,
let $\Phi$ be a class of weights and let $\K$ be any $\Pos$-category. 
Following~\cite{albert-kelly}, let 
$\Phi\mbox{-}\mathsf{Cocts}$ be the 2-category of 
$\Phi$-cocomplete categories, $\Phi$-cocontinuous functors, 
and natural transformations. The free cocompletion of $\K$ 
with respect to $\Phi$, denoted by $\iota: \K\hookrightarrow \Phi(\K)$,
is uniquely characterized by the property that composition with $\iota$
induces an equivalence 
$
\Phi\mbox{-}\mathsf{Cocts}[\Phi(\K), \mathscr L] 
\cong 
[\K, \mathscr L]
$ 
of categories.
Its inverse is given by left Kan extension along $\iota$.} 
Briefly, it means that every poset can be expressed as a canonical
sifted colimit of finite discrete posets. This colimit can be 
`decomposed': every poset is a filtered colimit of finite posets, 
which in turn arise as reflexive coinserters of discrete finite 
(po)sets, as explained in Proposition~\ref{poset=refl_coins_discrete}. 

\begin{definition}(\cite{kelly-lack}) \label{def:strongly_finitary}
A \emph{strongly finitary} functor $T':\Pos\to \Pos$ is 
a locally monotone functor isomorphic to the $\Pos$-enriched 
left Kan extension along $\iota:\Set_f\to\Pos$ 
of its restriction along $\iota$, 
that is, $T'\cong \Lan_{\iota}(T'\iota)$ holds. 
\end{definition}

\noindent Thus, strongly finitary endofunctors of $\Pos$ are 
{\em precisely\/} the locally monotone endofunctors of $\Pos$ 
that preserve (enriched) sifted colimits. 

\medskip\noindent Recall the examples  $\BA$ and $\DL$ of $\Pos$-categories. 
They are connected by the monadic {\em enriched\/} 
adjunctions 
$$
F\dashv U:\BA\to\Set,
\quad 
F'\dashv U':\DL\to\Pos,
$$ 
where $U$ and $U'$ are the corresponding forgetful functors. Extending the notations employed in Section~\ref{sec:Presentations}, we put 
$$\mathbf J:\BA_\ff\to\BA \quad\quad \text{ and } \quad\quad\mathbf J':\DL_\ff\to\DL $$
 to be the inclusion functors of the full subcategories spanned by the algebras which are \emph{free on finite (discrete po)sets}. 

\begin{lemma}\label{dense lemma}
The inclusion functors $\mathbf J$ and $\mathbf J'$ exhibit $\BA$, respectively $\DL$, as the free enriched cocompletions under sifted colimits of $\BA_\ff$ and $\DL_\ff$. In particular, these functors are dense.%
\footnote{A general functor $H: \mathscr A\to \mathscr K$ 
is \emph{dense} if the left Kan extension 
of $H$ along itself is (naturally isomorphic to) 
the identity functor on $\mathscr K$; 
that is, each $X$ of $\K$ can be expressed as a canonical colimit 
$\K(H-,X)* H$~\cite[Chapter~5]{kelly:enriched}.
}
\end{lemma}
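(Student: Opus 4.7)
The plan is to reduce the statement to the general theorem that a $\Pos$-enriched finitary variety is the free enriched cocompletion under sifted colimits of the full subcategory spanned by the free algebras on finite discrete posets. In the ordered setting this is the main result of \cite{kurz+velebil:varieties} (analogous to the unenriched result of \cite{kurz-rosicky:strong} used earlier in the paper); it applies directly to $\DL$, which is an ordered variety whose monotone operations $\wedge, \vee, \bot, \top$ satisfy finitely many inequations, and whose free algebras on finite discrete posets are by definition $\DL_\ff$. For $\BA$, I would observe that although negation is involved, $\BA$ is still a $\Pos$-variety: its $\Pos$-enrichment is discrete (since the monotonicity of $\neg$ forces hom-posets to be discrete), and free Boolean algebras on a finite set and on the corresponding finite discrete poset coincide. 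Hence the same theorem specializes and yields the statement for $\mathbf J$.

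The substantive content of the cited theorem, which I would recall in outline, is that for any algebra $A$ in such a variety one has the canonical two-step presentation: first, $A$ is a filtered colimit of its finitely presentable subalgebras, and second, each finitely presentable $B$ arises as a reflexive coinserter (a reflexive coequalizer in the discretely enriched case of $\BA$) of a pair between free algebras on finite discrete posets. Since both filtered colimits and reflexive coinserters are $\Pos$-enriched sifted colimits (as recorded in Section~\ref{sec:sifted}), this assembles $A$ as a canonical enriched sifted colimit of objects of $\BA_\ff$, respectively $\DL_\ff$. The monadicity of $F\dashv U:\BA\to\Set$ and $F'\dashv U':\DL\to\Pos$ together with the fact that the corresponding monads preserve enriched sifted colimits (being finitary and, in the $\DL$ case, presented by finitely many monotone operations and inequations) is what validates the universal property: for any $\Pos$-cocomplete $\mathscr{L}$, precomposition with $\mathbf J$ (resp. $\mathbf J'$) yields an equivalence between enriched sifted-colimit preserving functors out of $\BA$ (resp. $\DL$) and arbitrary locally monotone functors out of $\BA_\ff$ (resp. $\DL_\ff$).

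The second claim, density of $\mathbf J$ and $\mathbf J'$, is then an immediate formal consequence: if $\iota:\mathscr A\to\K$ exhibits $\K$ as the free cocompletion of $\mathscr A$ under a class of colimits $\Phi$, then applying the universal property with target $\K$ itself and with the identity on $\K$ shows that $\mathrm{id}_\K \cong \Lan_\iota \iota$, which is exactly the definition of density recalled in the footnote.

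The main obstacle I anticipate is of a bookkeeping nature rather than a conceptual one: one must verify that the general enriched theorem of \cite{kurz+velebil:varieties} really covers $\BA$, whose $\Pos$-enrichment is degenerate. The point to check is that the Eilenberg--Moore construction for a finitary $\Set$-monad, when viewed as a trivially $\Pos$-enriched monad via the discrete functor $D:\Set\to\Pos$, produces a $\Pos$-variety for which the enriched sifted-colimit cocompletion statement coincides with the ordinary sifted-colimit cocompletion statement. Once this identification is made, no further work is needed for $\BA$, and the result for $\DL$ is a direct instance of the cited theorem.
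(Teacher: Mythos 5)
Your proposal is correct and follows essentially the same route as the paper: the paper likewise handles $\BA$ by invoking the unenriched cocompletion result of \cite{kurz-rosicky:strong} and observing that the discrete enrichment lets it transfer, and handles $\DL$ as a direct instance of the ordered-varieties theorem of \cite{kurz+velebil:varieties} (Theorem~6.10 there), with density following formally from the free-cocompletion universal property. Your additional unpacking of the two-step colimit decomposition and of the degenerate enrichment of $\BA$ is consistent with, and slightly more detailed than, what the paper records.
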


\begin{proof}
We know that the functor $\mathbf J:\BA_\ff\to\BA$ exhibits $\BA$ as a free cocompletion under sifted colimits 
(see~\cite{kurz-rosicky:strong}). Now the conclusion for $\mathbf J$ follows because of the discrete enrichment. 

For the inclusion functor $\mathbf J'$ for distributive lattices, the required result is an instance 
of~\cite[~Theorem~6.10]{kurz+velebil:varieties}, since $\DL$ is a finitary variety of
ordered algebras (thus, $\DL$ is isomorphic to the category of
algebras for a strongly finitary monad on $\Pos$).
\end{proof}

\begin{corollary}\label{cor:when L is lan}
A functor $L:\BA\to\BA$ has the form $\Lan_\mathbf J (L\mathbf J)$
if and only if it preserves (ordinary) sifted colimits. 
A functor $L':\DL\to\DL$ has the form $\Lan_{\mathbf J'} (L'\mathbf J')$
if and only if it preserves sifted colimits.  
\end{corollary}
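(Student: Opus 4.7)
The plan is to reduce both biconditionals to the universal property established in Lemma~\ref{dense lemma}, namely that $\mathbf{J}$ and $\mathbf{J}'$ are the inclusions of $\BA_\ff$, respectively $\DL_\ff$, into their free cocompletions under sifted colimits (taken unenriched in the Boolean case and $\Pos$-enriched in the distributive case). Both directions are then essentially formal.

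For the \emph{only if} direction of the first statement, suppose $L\cong\Lan_{\mathbf{J}}(L\mathbf{J})$. The universal property says that the assignment $H\mapsto\Lan_{\mathbf{J}} H$ is the inverse of the equivalence $\mathsf{Sift}[\BA,\BA]\xrightarrow{\,(-)\circ\mathbf{J}\,}[\BA_\ff,\BA]$; in particular every functor of the form $\Lan_{\mathbf{J}} H$ preserves sifted colimits, and hence so does $L$. For the \emph{if} direction, density of $\mathbf{J}$ (Lemma~\ref{dense lemma}) presents every $A\in\BA$ canonically as the sifted colimit $A\cong\BA(\mathbf{J}-,A)*\mathbf{J}$. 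Applying $L$ and invoking sifted-cocontinuity gives
$$LA \;\cong\; L\bigl(\BA(\mathbf{J}-,A)*\mathbf{J}\bigr) \;\cong\; \BA(\mathbf{J}-,A)*(L\mathbf{J}) \;=\; \Lan_{\mathbf{J}}(L\mathbf{J})(A),$$
naturally in $A$, which is exactly the required isomorphism.

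The argument for $\DL$ is strictly analogous, with all weighted colimits and left Kan extensions interpreted in the $\Pos$-enriched sense---which is precisely the sense in which Lemma~\ref{dense lemma} formulates the universal property of $\mathbf{J}'$. There is no genuine obstacle: the corollary is simply the restatement of the free cocompletion property under the bijection between a functor and its restriction to a dense subcategory, so the only point that needs even a moment of attention is that ``sifted'' means ``ordinary sifted'' for $\BA$ (legitimate because $\BA_\ff$ and $\BA$ are discretely enriched) and ``$\Pos$-enriched sifted'' for $\DL$ (legitimate because $\DL$ is a genuine ordered variety).
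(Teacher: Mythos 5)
Your proof is correct and is essentially the argument the paper intends: the corollary is stated as an immediate consequence of Lemma~\ref{dense lemma}, and you have simply spelled out the two directions of the free-cocompletion universal property (restriction and $\Lan_{\mathbf J}$ forming an equivalence between sifted-colimit-preserving endofunctors and functors out of the dense subcategory). The only point worth a footnote is that in the ``if'' direction the canonical weight $\BA(\mathbf J-,A)$ need only lie in the \emph{saturation} of the class of sifted weights, but sifted-cocontinuous functors preserve all colimits weighted by that saturation, so your computation goes through.
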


%========================================================%

\section{Presenting functors on $\Pos$}\label{sec:Pos-functors}

\medskip\noindent
For reasons explained in the introduction, we are interested in 
the {\em posetifications\/} of functors $T:\Set\to\Set$. 
Technically, posetifications can be described as enriched left Kan
extensions of the functors $DT:\Set\to\Pos$. This suggests to also
investigate the more general question of when a left Kan extension 
of a functor $H:\Set\to\Pos$ exists. By general arguments, we know 
that such a left Kan extension exists if the functor $H$ is 
finitary, but that would exclude 
the example $T=\mathcal P$ from the introduction. Therefore, 
in Section~\ref{sec:posetification}, we show that, in fact, 
\emph{any} functor $H:\Set\to\Pos$ has an enriched left Kan 
extension. Then, in Section~\ref{sec:posetification2}, we 
characterize posetifications among all functors $\Pos\to\Pos$.

%========================================================%

\subsection{Posetifications and functors $\Pos\to\Pos$ with presentations in discrete arities}\label{sec:posetification}

\medskip\noindent In order to relate endofunctors of $\Set$ and of 
$\Pos$, we give below an improved version 
of~\cite[Definition~1]{calco2011}:

\begin{definition}
\label{def:extension}
Let $T$ be an endofunctor on $\Set$. An endofunctor 
$T':\Pos\to \Pos$ is said to be a \emph{$\Pos$-extension} of $T$ if $T'$ is locally monotone and if the square
\begin{equation}\label{eq:extension}
\vcenter{
\xymatrix{
\Pos
\ar[0,1]^-{T'}
\ar@{<-}[1,0]_{D}
\ar@{}[1,1]|{\nwarrow\alpha}
&
\Pos
\ar@{<-}[1,0]^{D} 
\\
\Set
\ar[0,1]_-{T}
&
\Set
\\
}
}
\end{equation} 
commutes up to a natural isomorphism $\alpha:DT\to T'D$. 

A $\Pos$-extension $ T'$ is called the \emph{posetification} of $T$ if the above square exhibits $T'$ as $\Lan_D(DT)$ (in the $\Pos$-enriched sense), 
having $\alpha$ as its unit.
\end{definition} 

\begin{remark}\label{rem:posetific}
\mbox{}\hfill
\begin{enumerate}
\item
Any extension of $T$ has to coincide with $T$ on discrete sets. 
One would be tempted to take $T'= DTV$ as an extension of $T$; 
but this is not necessarily locally monotone, as $V$ 
fails to be so. There is also the possibility of 
choosing $T'=DTC$, which does produce an extension, but 
not the posetification.
In fact, $DTC$ is the $\Pos$-enriched \emph{right} 
Kan extension $\mathrm{Ran}_D (DT)$. 
\item \label{DC}
Extensions are not necessarily unique. For example, 
the identity functor on $\Pos$ obviously extends the identity 
functor on $\Set$, but the same does the functor $DC$ sending 
a poset to the (discrete) set of its connected components. 
\item 
In general extensions do not need to inherit all the properties 
of the $\Set$-functors that they extend. For example, extensions 
of finitary functors are not necessarily finitary: consider the finitary functor on $\Set$ 
which maps a set $X$ to the set of almost constant sequences on $X$, 
\[
TX=\{l:\mathbbm N \to X \mid l(n)=l(n+1) \text{ for all but a
    finite number of }n\}
    \]
It admits the $\Pos$-extension 
\[
\qquad \quad T'(X,\leq)=
\{l:(\mathbbm N, \leq) \to (X,\leq) \mid 
l(n)\leq l(n+1) 
\text{ for all but a finite number of }n\}
\]
mapping a poset $(X, \leq)$ to the poset of almost monotone 
sequences on $X$, ordered component-wise. But this extension 
$T'$ is not finitary: to see this, consider the family of finite 
posets $(\{0, \ldots, n-1\})_{n<\omega}$ with the usual order, 
with inclusion maps, whose colimit in $\Pos$ is $(\mathbbm N,\leq)$.
Then one can easily check that $T'$ does not preserve the above 
filtered colimit. 
\end{enumerate}
\end{remark}

%========================================================%

\medskip\noindent 
It is clear from general considerations that every \emph{finitary} 
endofunctor of $\Set$ has a posetification. The point 
of the next theorem is to drop the finitarity restriction.

\begin{theorem}\label{thm:posetification}
Every endofunctor of $\Set$ has a posetification. 
\end{theorem}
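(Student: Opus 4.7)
The plan is to construct the posetification explicitly using the reflexive coinserter presentation of posets given in Proposition~\ref{poset=refl_coins_discrete}. Since any $\Pos$-enriched left Kan extension, if it exists, must preserve the coinserter $(N_X)$ (being a colimit), and must agree with $DT$ on discrete posets, there is no freedom: for each poset $X$ one is forced to define
\[
T'X \;=\; \mathsf{coins}\bigl(DTX_1 \rightrightarrows DTX_0\bigr)
\]
where $X_0$ is the underlying set of $X$, $X_1=\{(x,x')\mid x\le x'\}$, and the parallel pair is $DT$ applied to the two projections. This coinserter exists because $\Pos$ is cocomplete.

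Next I would verify functoriality and local monotonicity. A monotone $f:X\to Y$ yields set maps $f_0:X_0\to Y_0$ and $f_1:X_1\to Y_1$ commuting with $d^0,d^1$; applying $DT$ and using the $1$-dimensional universal property of the coinserter produces $T'f$. For $f\le g$ one uses the map $\langle f,g\rangle:X_0\to Y_1$ (well defined because $f(x)\le g(x)$), whose composites with $d^0,d^1$ recover $f_0,g_0$; applying $DT$ and then the inserter inequality $c_Y\circ DT(d^0)\le c_Y\circ DT(d^1)$ yields $T'f\le T'g$. Agreement with $DT$ on discrete objects is immediate: for a set $A$, both projections $d^0,d^1:A\to A$ are identities, so the coinserter is $DTA$, providing the unit $\alpha:DT\to T'D$ as an isomorphism.

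The substance is the universal property of Remark~\ref{rem:lan}(\ref{rem:weak_def_lan}). Given a locally monotone $H':\Pos\to\Pos$ and a natural transformation $\beta:DT\to H'D$, I would define $\bar\beta_X:T'X\to H'X$ as the unique factorisation through the coinserter of the composite $H'c\circ\beta_{X_0}:DTX_0\to H'X$. The required inequality $H'c\circ\beta_{X_0}\circ DTd^0 \le H'c\circ\beta_{X_0}\circ DTd^1$ follows from naturality of $\beta$ (swapping $\beta_{X_0}\circ DTd^i$ with $H'Dd^i\circ\beta_{X_1}$) and then from local monotonicity of $H'$ applied to the defining inequality $c\circ d^0\le c\circ d^1$ of the coinserter $(N_X)$. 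Naturality of $\bar\beta$ in $X$ and uniqueness both follow from the fact that the maps $c:DX_0\to X$ are jointly epimorphic as $X$ ranges over posets, together with the $1$- and $2$-dimensional aspects of the universal property of each $T'X$.

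The last step is to upgrade this universal property to the weighted colimit formula of Definition~\ref{def:Lan}. By Remark~\ref{rem:lan}(\ref{rem:weak_def_lan}), these coincide whenever the codomain is powered, and $\Pos$ is cartesian closed hence powered, so nothing more is required. The main potential obstacle is that $\Set$ is not small, so the standard existence theorem for enriched Kan extensions does not apply; what rescues the argument is that Proposition~\ref{poset=refl_coins_discrete} presents every poset as a \emph{small} (in fact finite, over $\mathcal{D}$) weighted colimit of discrete posets, reducing the would-be large coend to a pointwise coinserter that genuinely exists.
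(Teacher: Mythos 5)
Your proposal is correct and follows essentially the same route as the paper: define $T'X$ as the coinserter of $DT$ applied to the truncated nerve of $X$, verify functoriality and local monotonicity via the one- and two-dimensional universal properties, establish the bijection between natural transformations $T'\to H'$ and $DT\to H'D$ exactly as in the paper's step (4), and invoke the poweredness of $\Pos$ (Remark~\ref{rem:lan}) to upgrade to the full Kan extension property. Your closing observation about why non-smallness of $\Set$ is harmless is a nice explicit articulation of what the paper's construction achieves implicitly.
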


\begin{proof}
The posetification of a functor $T:\Set\to\Set$ is constructed as follows. Recall from Proposition~\ref{poset=refl_coins_discrete} that each poset $X$ can be expressed as a reflexive coinserter in $\Pos$, as follows:
\[
\xymatrix{DX_1 \ar@{<-}`d[r]`r[r]_{Di_X} \ar@<0.35ex>[r]^{Dd_X^0} \ar@<-0.35ex>[r]_{Dd_X^1}& DX_0 \ar[r]^-{c_X} & X}
\]
Notice that unlike Proposition~\ref{poset=refl_coins_discrete}, in the above we have added subscripts to emphasize the poset $X$. 

Denote by $e_X:DTX_0\to {T'}X$ the coinserter in $\Pos$ of
the (reflexive) pair $(DTd_X^0, DTd_X^1)$:
\begin{equation}\label{eq:def-T'}
\xymatrix@C=35pt{DTX_1 \ar@{<-}`d[r]`r[r]_{DTi} \ar@<0.35ex>[r]^{DTd^0_X} \ar@<-0.35ex>[r]_{DTd^1_X}& DTX_0 \ar[r]^-{e_X} & T'X}
\end{equation}
We claim that the assignment $X\mapsto{T'}X$ extends to a locally monotone functor
${T'}:\Pos\to\Pos$, and that ${T'}\cong\Lan_{D}{(DT)}$ holds.  

\medskip\noindent 
We proceed in several steps.
\begin{enumerate}
\item 
Consider a monotone map $f:X\to Y$. It induces the obvious
maps $f_0:X_0\to Y_0$ and $f_1:X_1\to Y_1$. Moreover, the squares
$$
\xymatrix{
X_1
\ar[0,1]^-{f_1}
\ar[1,0]_{d^0_X}
&
Y_1
\ar[1,0]^{d^0_Y}
&
&
X_1
\ar[0,1]^-{f_1}
\ar[1,0]_{d^1_X}
&
Y_1
\ar[1,0]^{d^1_Y}
\\
X_0
\ar[0,1]_-{f_0}
&
Y_0
&
&
X_0
\ar[0,1]_-{f_0}
&
Y_0
}
$$
commute. Thus, we have the inequality 
$$
e_Y\circ DTf_0\circ DTd^0_X \leq e_Y\circ DTf_0\circ DTd^1_X
$$
since 
\[
DTf_0\circ DTd^0_X=DTd^0_Y\circ DTf_1 \qquad DTf_0\circ DTd^1_X=DTd^1_Y\circ DTf_1
\]
and
\[
e_Y\circ DTd^0_Y\leq e_Y\circ DTd^1_Y
\]
hold.

Hence one can define ${T'}f:{T'}X\to{T'}Y$ as the unique
mediating monotone map (using the co-universality of the coinserter $e_X$).
\item
The 1-dimensional aspect of coinserters proves immediately
that ${T'}$ preserves composition and identity; that is, $T'$ is an ordinary functor $\Pos\to \Pos$. 
\item
We show that $T'$ is locally monotone; that is, ${T'}f\leq{T'}g$ whenever $f\leq g$ holds, for monotone maps $f,g:X\to Y$. Observe
that $f\leq g$ yields a map $\tau:X_0\to Y_1$, $x\mapsto (f(x),g(x))$, such that the triangles
$$
\xymatrix{
X_0
\ar[0,1]^-{\tau}
\ar[1,1]_{f_0}
&
Y_1
\ar[1,0]^{d^0_Y}
&
&
X_0
\ar[0,1]^-{\tau}
\ar[1,1]_{g_0}
&
Y_1
\ar[1,0]^{d^1_Y}
\\
&
Y_0
&
&
&
Y_0
}
$$
commute.

To prove ${T'}f\leq{T'}g$, it is enough to check that 
${T'}f\circ e_X\leq {T'}g\circ e_X$ holds, for we can then use
the 2-dimensional aspect of coinserter $e_X$. This inequality follows from
\begin{align*}
{T'}f\circ e_X = 
e_Y \circ DTf_0 = 
e_Y \circ DTd^0_Y \circ DT\tau \\
{T'}g\circ e_X = 
e_Y \circ DTg_0 = 
e_Y \circ DTd^1_Y \circ DT\tau,
\end{align*}
and from the fact that $e_Y\circ DTd^0_Y \leq e_Y\circ DTd^1_Y$ holds.
\item
To prove ${T'}\cong\Lan_{D}{(DT)}$, we shall show that there
is an isomorphism between the poset of natural transformations ${T'}\to H$
and the poset of natural transformations $DT\to HD$, for every locally monotone $H:\Pos\to\Pos$ (see Remark~\ref{rem:lan}\eqref{rem:weak_def_lan}). 
\begin{enumerate}
\item 
Consider a natural transformation $\alpha:DT\to HD$.
For every poset $X$, we define $\check{\alpha}_X:{T'}X\to HX$
as the unique mediating map out of a coinserter:
$$
\xymatrix{
DTX_0
\ar[0,1]^-{e_X}
\ar[1,0]_{\alpha_{X_0}}
&
{T'}X
\ar@{.>}[1,0]^{\check{\alpha}_X}
\\
HDX_0
\ar[0,1]_-{Hc_X}
&
HX
}
$$
Recall that, above, $c_X:DX_0\to X$ is a coinserter of $Dd^0_X$, $Dd^1_X$.

The above definition makes sense since 
$$
Hc_X\circ\alpha_{X_0}\circ DTd^0_X \leq Hc_X\circ\alpha_{X_0}\circ DTd^1_X
$$
holds: the equalities 
$$
\alpha_{X_0}\circ DTd^0_X=HDd^0_X\circ\alpha_{X_1} 
\quad \mbox{and} \quad 
\alpha_{X_0}\circ DTd^1_X=HDd^1_X\circ\alpha_{X_1}
$$ 
follow by naturality, and 
$$
c_X\circ d^0_X\leq c_X\circ d^1_X
$$
holds, since $c_X$ is a coinserter.

We prove that $\check{\alpha}$ is natural. Consider any monotone map $f:X\to Y$ and compare 
\[
\qquad \qquad \vcenter{
\xymatrix{
DTX_0
\ar[0,1]^-{e_X}
\ar[1,0]_{DTf_0}
&
{T'}X
\ar[1,0]^{{T'}f}
\\
DTY_0
\ar[0,1]^-{e_Y}
\ar[1,0]_{\alpha_{Y_0}}
&
{T'}Y
\ar[1,0]^{\check{\alpha}_Y}
\\
HDY_0
\ar[0,1]_-{Hc_Y}
&
HY
}
}
\mbox{\qquad with \qquad}
\vcenter{
\xymatrix{
DTX_0
\ar[0,1]^-{e_X}
\ar[1,0]_{\alpha_{X_0}}
&
{T'}X
\ar[1,0]^{\check{\alpha}_X}
\\
HDX_0
\ar[0,1]^-{Hc_X}
\ar[1,0]_{HDf_0}
&
HX
\ar[1,0]^{Hf}
\\
HDY_0
\ar[0,1]_-{Hc_Y}
&
HY
}}
\]
Using naturality of $\alpha$ and co-universality of 
$e_X$, we conclude $Hf\circ\check{\alpha}_X=\check{\alpha}_Y\circ{T'}f$.
\item
Given a natural transformation $\beta:{T'}\to H$, we define, for every
set $X_0$, the mapping $\wh{\beta}_{X_0}:DTX_0\to HDX_0$ to be 
$\beta_{DX_0}:{T'}DX_0\to HDX_0$ (Here we have used the fact that ${T'}DX_0$ is naturally isomorphic to $DTX_0$).
\item
It is easy then to see that the assignments $\alpha\mapsto\check{\alpha}$ and $\beta\mapsto\wh{\beta}$
are monotone and inverse to each other.\qedhere
\end{enumerate}
\end{enumerate}
\end{proof}

\medskip\noindent As a corollary of the proof of 
the above theorem (replace $DT$ by $H$ everywhere in the 
above proof) we obtain

\begin{corollary}
For every functor $H:\Set\to\Pos$, the $\Pos$-enriched 
left Kan extension $\Lan_{D}{H}:\Pos\to\Pos$ exists.
\end{corollary}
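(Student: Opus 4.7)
The plan is to replay the construction from the proof of Theorem~\ref{thm:posetification} verbatim, with $DT$ replaced by the given functor $H:\Set\to\Pos$ throughout. Nothing in that argument depends on the codomain of $DT$ being discrete: it uses only that every functor out of the discretely enriched $\Set$ is automatically locally monotone and that $\Pos$ admits all (reflexive) coinserters, and both facts apply equally to $H$.

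Concretely, for a poset $X$ with truncated nerve (Proposition~\ref{poset=refl_coins_discrete}) given by $d^0_X, d^1_X:X_1\to X_0$ and section $i_X$, I define $H'X$ as the reflexive coinserter $e_X:HX_0\to H'X$ in $\Pos$ of the pair $Hd^0_X, Hd^1_X:HX_1\to HX_0$. The action of $H'$ on a monotone map $f:X\to Y$ is induced via the 1-dimensional aspect of the coinserter from the set-level squares $d^i_Y\circ f_1 = f_0\circ d^i_X$, and functoriality then follows as in step~(2) of the proof of Theorem~\ref{thm:posetification}. For local monotonicity (step~(3)), given $f\leq g:X\to Y$ the monotone map $\tau:X_0\to Y_1$, $x\mapsto(f(x),g(x))$, factorises $Hf_0$ and $Hg_0$ through $Hd^0_Y$ and $Hd^1_Y$ respectively; the 2-dimensional aspect of $e_X$ then delivers $H'f\leq H'g$.

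Finally, to identify $H'$ with $\Lan_D H$ I invoke the weak form of the universal property from Remark~\ref{rem:lan}\eqref{rem:weak_def_lan}, which suffices since $\Pos$ is powered: I must exhibit a $\Pos$-natural bijection between natural transformations $H'\to H''$ and $H\to H''D$, for every locally monotone $H'':\Pos\to\Pos$. In one direction, $\alpha:H\to H''D$ induces $\check\alpha_X:H'X\to H''X$ as the unique factorisation of $H''c_X\circ\alpha_{X_0}$ through $e_X$, using that $H''$ preserves the inequality $c_X\circ d^0_X\leq c_X\circ d^1_X$ coming from the truncated nerve; in the other, $\beta:H'\to H''$ restricts to $\wh\beta_{X_0}=\beta_{DX_0}:HX_0\to H''DX_0$, where I exploit the canonical isomorphism $H'DX_0\cong HX_0$ arising from the fact that $d^0=d^1$ in the truncated nerve of a discrete poset, so the defining coinserter collapses to the identity. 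Monotonicity and mutual inverseness of these assignments then follow by the same diagram chases as in step~(4) of the proof of Theorem~\ref{thm:posetification}. The one conceptual point worth emphasising is that the construction uses the truncated nerve of \emph{every} poset at once, so no smallness assumption on the domain of the left Kan extension is needed — this is precisely the feature that gives the corollary its content, and it is not an obstacle but rather a direct consequence of the explicit coinserter formula for $H'$.
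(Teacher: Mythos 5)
Your proposal is correct and is precisely the paper's own argument: the paper derives this corollary by instructing the reader to replace $DT$ by $H$ throughout the proof of Theorem~\ref{thm:posetification}, which is exactly the substitution you carry out (and your observation that $H'DX_0\cong HX_0$ because the truncated nerve of a discrete poset has $d^0=d^1$ is the right justification for the step the paper leaves implicit).
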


\medskip\noindent The following gives an example where the construction of posetification as given by Equation~\eqref{eq:def-T'} can be obtained straightforwardly.

\begin{example}\label{ex:posetif_powerset}
Let $T:\Set\to\Set$ be the covariant powerset functor $\pcal$ and let $X$ be a poset. We shall see how to use Equation~\eqref{eq:def-T'} to determine $\pcal' X$. Taking into account that $D\pcal d^0_X$ and $D\pcal d^1_X$ are the direct images of the projections, the coinserter \eqref{eq:def-T'} becomes
\[
\xymatrix@C=35pt{D\pcal X_1 
\ar@<0.35ex>[r]^{D\pcal d^0_X} \ar@<-0.35ex>[r]_{D\pcal d^1_X}
& D\pcal X_0 \ar[r]^-{e_X} & \pcal 'X}
\]
Recall from Remark~\ref{rem:construction_coinserter} how coinserters are built in $\Pos$: first, consider the relation $\mathbf r$ on $\mathcal PX_0$ given by:
\begin{equation}\label{egli_milner_coinserter}
Y \mathbf r Z \quad \Leftrightarrow \quad \exists S\subseteq X_1 \ . \ \mathcal P d^0_X (S)=Y \quad \mbox{ and } \quad \mathcal P d^1_X(S)=Z
\end{equation}
for subsets $Y,Z\subseteq X_0$ (in fact, in
Remark~\ref{rem:construction_coinserter} we have considered the
transitive closure of relation~\eqref{egli_milner_coinserter}). Unravelling the above, we obtain that $Y \mathbf r Z $ if and only if 
\begin{equation}\label{eq:eglimilner}
\forall y\in Y \,.\, \exists z\in Z \,.\, y\le z  \quad \mbox{ and } \quad 
\forall z\in Z \,.\, \exists y\in Y \,.\, y\le z  
\end{equation}
The relation $\mathbf r$ determined by~Equation~\eqref{eq:eglimilner} is known as the Egli-Milner order: it is reflexive and transitive (consequently, it coincides with its transitive closure), and two subsets $Y,Z$ of $X_0$ are equivalent with respect to $\mathbf r$ if and only if they have the same convex closure. Thus $\pcal 'X$, obtained by quotienting $\mathcal PX_0$ with respect to the equivalence relation induced by $\mathbf r$, is the set of convex subsets of $X$ ordered by the Egli-Milner order.

Similarly, if we let $H:\Set\to\Pos$ with $HX=(\pcal X,\subseteq)$, we obtain the downset functor $\mathcal D:\Pos\to\Pos$ with $\mathcal DX$ ordered by inclusion as the left Kan extension of $H$ along $D$. Dually, if $H:\Set\to\Pos$ is given by $HX=(\pcal X,\supseteq)$, we have that $\Lan_D H$ is the upset functor $\mathcal U:\Pos\to\Pos$ with $\mathcal UX$ ordered by reverse inclusion. To verify this, note that two subsets are equivalent according to the left-hand side of \eqref{eq:eglimilner} if and only if they have the same downset closure and two subsets are equivalent according to the right-hand side  if and only if they have the same upset closure.
\end{example}

\begin{remark}[\textbf{On presentations by monotone operations and equations \linebreak in discrete arities}] 
\label{RemPos}
\mbox{}\hfill
\begin{enumerate}
\item %\label{finPos}
The posetification built in 
Theorem~\ref{thm:posetification} coincides with the one from~\cite[(3.2)]{calco2011} given by the coequalizer in $\Pos$
\begin{equation}\label{eq:coequ-posetification}
\entrymodifiers={!!<0pt,.7ex>+}
\xymatrix@=10pt@1{
\underset{m,n<\omega}{\coprod} 
\mathsf{Set}(m,n) \times Tm \times [D n,X] 
\ar@<-0.5ex>[r]%_-{\lambda} 
\ar@<0.5ex>[r]%^-{\rho} 
& 
\underset{n<\omega}{\coprod}
Tn
\times 
[Dn, X] 
\ar[r]%^-\pi 
&
\Lan_D(DT)(X)
} 
\end{equation}
if $T$ is finitary (this follows from the fact that $\Lan_{D}{(DT)}\cong
\Lan_{DI}(DTI)$, where remember from Section~\ref{sec:sifted} that $I:\Set_f\to\Set$ denotes the inclusion functor). 

\item\label{RemPos:item-present1}
Let us explain how~Equation~\eqref{eq:coequ-posetification} gives \emph{a presentation by monotone operations and equations in discrete arities}. The operations of arity $n$ are given by $Tn$. They are necessarily monotone because the arguments $[Dn,X]$ form a poset and we take the coequalizer in $\Pos$. The arities are discrete because $m,n$ range over sets, not posets. For each pair $(m,n)$, we have a poset of equations $\mathsf{Set}(m,n) \times Tm \times [D n,X]$ (where the order on the equations does not play a role in the computation of the coequalizer).

\item 
For an explicit example of such a presentation by operations and equations, consider $T$ to be the finite powerset functor. First, recall that it can be presented in $\Set$ as the quotient of $\coprod_{n<\omega} X^n$ by a set of equations 
specifying that the order and the multiplicity in which elements of the set $X$ occur in lists in $X^n$ does not matter. Second, with $X$ now standing for a poset, note that according 
to~\cite[Proposition~5]{calco2011}, we obtain the posetification of $T$ by quotienting $\coprod_{n<\omega} [Dn, X]$ in $\Pos$ by the same equations. It is not difficult to show that this gives us the (finite) convex powerset functor on 
$\Pos$~\cite[Proposition~5.1]{kurz+velebil:presentations}.

\item If we generalize from the posetification of a finitary functor $\Set\to\Set$ to the left Kan extension of a finitary functor $\Set\to\Pos$ the formula~\eqref{eq:coequ-posetification} is still available and we obtain the same presentations as in item (\ref{RemPos:item-present1}), just that the $Tn$ need not be discrete anymore. For example, if we let the $Tn$ in~Equation~\eqref{eq:coequ-posetification} be $\mathcal P(n)$ ordered by inclusion, we get a presentation of the functor $\Pos\to\Pos$ mapping a poset $X$ to the set of finitely generated downsets ordered by inclusion (Hoare powerdomain).

\item If we generalize further, giving up that the functor be
  finitary, we lose the formula~\eqref{eq:coequ-posetification} since
  the large coproducts may not exist in $\Pos$. Nevertheless, we can still interpret a functor $T:\Set\to\Pos$ as having \emph{a presentation by monotone operations and equations in discrete arities}. This time the arities range over all cardinals, so that for each cardinal $\aleph$ we have a poset of operations $T(\aleph)$ and for each pair $(\aleph,\aleph')$ 
of cardinals  we have a set of equations 
$\mathsf{Set}(\aleph,\aleph') \times T(\aleph) \times [D(\aleph'),X]$.
\end{enumerate}
\end{remark}

\noindent We summarize this discussion by making the following definition.

\begin{definition}\label{def:discretearities}
We say that a  functor $T':\Pos\to\Pos$  has a \emph{presentation in discrete arities} if $T'=\Lan_D H$ for some functor $H:\Set\to\Pos$.
\end{definition}

\medskip\noindent The posetification of a functor $T:\Set\to\Set$ has been defined by quotienting with respect to the relation given by applying $T$ to the nerve of a poset. Not surprisingly, this construction is closely related to the notion of relation lifting of the functor $T$:

%========================================================%

\begin{remark}[\textbf{On posetifications and relation lifting}]\label{wpb-exsq} 
\mbox{}\hfill
\begin{enumerate}
\item \label{RelLift}
Let $T:\Set\to \Set$ be an arbitrary functor. For a relation 
$\mathbf r\subseteq X\times Y$, recall that the $T$-relation 
lifting of $\mathbf r$ is given by the epi-mono factorisation
as on the right of the following diagram (see for 
example~\cite{barr,ckw,thijs, trnkova}): 
\[\xymatrix@C=35pt@R=19pt{
&\mathbf r \ar[ddl]_{\pi_1} \ar[ddr]^{\pi_2} \ar@{>->}[dd]
\\ & {} \ar@{}[d] &
\\
X & X\times Y \ar[l] \ar[r] & Y 
}
\qquad 
\xymatrix@C=35pt@R=13pt{
&T\mathbf r \ar[ddl]_{T\pi_1}\ar[ddr]^{T\pi_2} \ar@{->>}[d] 
\\ & \mathbf{Rel}_T(\mathbf r) \ar@{>->}[d]\\
TX & TX\times TY \ar[l] \ar[r] & TY 
}
\]
Explicitly, 
\[
\mathbf{Rel}_T(\mathbf r)= \{ (u,v)\in TX\times TY \mid \exists w\in T\mathbf r \ . \ T\pi_1 (w)=u \ \land \ T\pi_2 (w)=v \}
\]
The relation lifting satisfies the following properties:
\begin{enumerate}

\item \label{diag}

It preserves the equality relation: $=_{TX}\ =\ \mathbf{Rel}_T(=_X)$.

\item 

It preserves the inclusion of relations: if $\mathbf r\subseteq \mathbf s$,  then 
$\mathbf{Rel}_T(\mathbf r)\subseteq \mathbf{Rel}_T(\mathbf s)$.

\item \label{composRelLift}%Rel_T is colax

If $\mathbf r\subseteq X\times Y$ and $\mathbf s\subseteq Y\times
Z$, then 
$$
\mathbf{Rel}_T(\mathbf s\circ \mathbf r)\subseteq \mathbf{Rel}_T(\mathbf s)\circ \mathbf{Rel}_T(\mathbf r)
$$ 
with equality if and only if $T$ preserves weak pullbacks.

\item 

It preserves converses of relations: $\mathbf{Rel}_T(\mathbf r^\op)=\mathbf{Rel}_T(\mathbf r)^\op$.

\item 

Given functions $f:X\rightarrow X'$, $g:Y\rightarrow Y'$ and relation $\mathbf r'\subseteq X'\times Y'$, then 
\[ \mathbf{Rel}_T((f\times g)^{-1}(\mathbf r')) \subseteq (Tf\times Tg)^{-1}(\mathbf{Rel}_T(\mathbf r')) \]
with equality if $T$ preserves weak pullbacks.
\end{enumerate} 
\item In addition to the above, we should also mention the (less-known?) fact that relation lifting commutes with functor composition, in the sense that 
\[ 
\mathbf{Rel}_{TS}(\mathbf{r})=\mathbf{Rel}_T (\mathbf{Rel}_S (\mathbf{r}))
\]
for any relation $\mathbf r\subseteq X\times Y$ and any 
endofunctors $T$, $S$ of $\Set$
(see~\cite[Section~4.4]{ckw}, and use that, 
assuming the axiom of choice, any endofunctor of
$\Set$ preserves (strong) epimorphisms, 
i.e., surjective maps).

\item \label{posetif=rel-lift}
Recall again that the posetification $T'$ of a $\Set$-endofunctor $T$ was obtained via coinserters, 
\[
\xymatrix@C=30pt{DTX_1 \ar@<0.35ex>[r]^{DTd^0} \ar@<-0.35ex>[r]_{DTd^1}& DTX_0 \ar[r]^e & T'X}
\]
for any poset $X$. Observe in fact that the relation $\mathbf{r}$ described in Remark~\ref{rem:construction_coinserter} at the first stage of the coinserter construction, for the pair of (monotone) maps $DTd^0$ and $DTd^1$, is precisely the transitive closure of the $T$-relation lifting $\mathbf{Rel}_T(X_1)$ of the order $X_1$ on $X$. By~\hyperref[diag]{(\ref*{RelLift})(a)} above, $\mathbf{Rel}_T(X_1)$ is reflexive, and by~\hyperref[composRelLift]{(\ref*{RelLift})(c)} it is also transitive if $T$ preserves weak pullbacks. If this is the case, then the posetification $T'$ can be explicitly described as mapping a poset $X$ to the quotient poset of the preordered set $(TX_0, \mathbf{Rel}_T(X_1))$.

\item 
The above two items provide a proof that, for $T$, $S$, endofunctors of $\Set$ preserving weak pullbacks, the isomorphism
$$
(TS)'
\cong
T'S'
$$
holds for their posetifications.
\end{enumerate}
\end{remark}

\medskip\noindent The property of a functor $T:\Set\to\Set$ preserving weak pullbacks plays an important role in the theory of coalgebras \cite{rutten:uc}. In the category $\Pos$, the following concept is the enriched
analogue of a weak pullback.

\begin{definition}[\cite{Guitart80}]\label{def:exact}
An \emph{exact square} in the category $\Pos$ of posets, or in the category $\Pre$ of preorders, is a diagram
\begin{equation}
\label{exact-sq}
\vcenter{
\xymatrix{
E \ar[r]^{\alpha} \ar[d]_{\beta} \ar@{}[1,1]|{\swarrow}& X\ar[d]^f \\
Y\ar[r]_g & Z}
}
\end{equation}
with $f\circ \alpha\leq g\circ \beta$, such that
\begin{equation*}\label{exact}
\forall x\in X,y\in Y.\ f(x)\leq g(y)\ \Rightarrow \exists \ w\in E.\ (x\leq \alpha(w) \land \beta(w)\leq y)
\end{equation*}

\end{definition}

\medskip\noindent
That the above concept generalises weak pullbacks in sets
is seen as follows.
An exact square of {\em discrete\/} posets is precisely 
a weak pullback of their underlying sets. Equivalently, 
the discrete functor $D:\Set\to\Pos$ 
maps weak pullbacks to exact squares 
and reflects exact squares to weak pullbacks.

\medskip\noindent 
Given an endofunctor $T$ of $\Set$, we shall now connect 
the property of preserving weak pullbacks with the preservation 
of exact squares by the corresponding  posetification $T'$. 
This fact will be used in Theorem~\ref{mainthm} below.

\begin{theorem}\label{thm:wpb_exsq}
Let $T$ be any %finitary 
endofunctor of $\Set$ and let $T'$ be its posetification. 
Then $T$ preserves weak pullbacks if and only if $ T'$ preserves 
exact squares.
\end{theorem}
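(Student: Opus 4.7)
My plan is to establish the two implications separately, making essential use of the explicit description of the posetification $T'X$ recorded in Remark~\ref{wpb-exsq}: when $T$ preserves weak pullbacks, $\mathbf{Rel}_T(X_1)$ is already a preorder on $TX_0$, and $T'X$ is simply the antisymmetric quotient of $(TX_0,\mathbf{Rel}_T(X_1))$.

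For the direction $(\Leftarrow)$ I rely on the fact that the discrete functor $D:\Set\to\Pos$ maps weak pullbacks in $\Set$ to exact squares and, between discrete posets, reflects exact squares back to weak pullbacks (as noted after Definition~\ref{def:exact}), together with the defining isomorphism $\alpha:DT\cong T'D$ of the posetification (Definition~\ref{def:extension}). Starting from a weak pullback in $\Set$, applying $D$ produces an exact square of discrete posets; applying $T'$ preserves exactness by hypothesis; transporting along $\alpha$ yields $DT$ applied to the original weak pullback as an exact square between discrete posets; reflecting via $D$ then shows that the image under $T$ is again a weak pullback.

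For the direction $(\Rightarrow)$, assume $T$ preserves weak pullbacks and consider an exact square as in~\eqref{exact-sq}. The key observation is that exactness can be rephrased as a single equality of relations on $X_0\times Y_0$,
\[
(f\times g)^{-1}(Z_1) \;=\; X_1 \cdot [\alpha,\beta] \cdot Y_1,
\]
where $[\alpha,\beta]=\{(\alpha(w),\beta(w))\mid w\in E_0\}$ is the image of $\langle\alpha,\beta\rangle:E_0\to X_0\times Y_0$ and $\cdot$ denotes composition of relations. Applying $\mathbf{Rel}_T(-)$ to both sides, and using that under the weak-pullback-preservation hypothesis the relation lifting commutes \emph{with equality} with both inverse image (item (e) of Remark~\ref{wpb-exsq}) and relational composition (item (c)), yields
\[
(Tf\times Tg)^{-1}(\mathbf{Rel}_T(Z_1)) \;=\; \mathbf{Rel}_T(X_1) \cdot \mathbf{Rel}_T([\alpha,\beta]) \cdot \mathbf{Rel}_T(Y_1).
\]
A short verification, using that $T$ preserves the surjection $E_0\twoheadrightarrow [\alpha,\beta]$, gives the explicit description $\mathbf{Rel}_T([\alpha,\beta])=\{(T\alpha(w'),T\beta(w'))\mid w'\in TE_0\}$. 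Reading the displayed equality modulo the quotient maps $TX_0\twoheadrightarrow T'X$, etc., then translates, equivalence-class by equivalence-class, into exactly the exactness condition for $T'$ applied to~\eqref{exact-sq}; the remaining inequality $T'f\circ T'\alpha\leq T'g\circ T'\beta$ follows from the local monotonicity of $T'$.

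The main obstacle I foresee is the careful bookkeeping in the dictionary between order relations $[u]\le[v]$ in the quotient posets $T'X,T'Y,T'Z,T'E$ and the preorder $\mathbf{Rel}_T(X_1)$ on representatives in $TX_0$, together with the correct direction of relational composition. Once this dictionary is set up, the argument for $(\Rightarrow)$ becomes essentially a one-line invocation of the composition law for $T$-relation lifting applied to the relational decomposition of an exact square.
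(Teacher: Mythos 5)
Your proposal is correct. The backward direction is verbatim the paper's argument. For the forward direction you take a recognizably different route: the paper does not argue via the relational calculus but instead factors $T'$ as $\mathsf{Quot}\circ\overline{T}\circ\mathsf{Incl}$ through preorders (where $\overline{T}X=(TX_0,\mathbf{Rel}_T(X_1))$), reduces to showing $\overline{T}$ preserves exact squares, and then performs an explicit element chase: it forms three pullbacks in $\Set$, including $R=\{(x,y)\mid f(x)\le g(y)\}$, uses the axiom of choice to fix a witness map $\theta:R\to E$, applies $T$ to the resulting cube, and extracts the required $\omega=T\theta(\bar w)\in TE$ from weak-pullback preservation. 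Your version packages exactly this combinatorics into the identity $(f\times g)^{-1}(Z_1)=X_1\cdot[\alpha,\beta]\cdot Y_1$ (whose tabulation is the paper's $R$, with $\theta$ playing the role of a section of $E\twoheadrightarrow[\alpha,\beta]$) and then invokes the lifting laws of Remark~\ref{wpb-exsq}: note that you need the weak-pullback (equality) direction of item (e) but only the always-valid inclusion $\mathbf{Rel}_T(\mathbf s\circ\mathbf r)\subseteq\mathbf{Rel}_T(\mathbf s)\circ\mathbf{Rel}_T(\mathbf r)$ of item (c), since the reverse inclusion of your final identity is supplied by local monotonicity. What your approach buys is a shorter, more conceptual proof that isolates where weak-pullback preservation enters and avoids the cube diagram; what the paper's approach buys is self-containedness (it does not lean on the citation-backed equality cases of the relation-lifting laws, proving the needed instances by hand) and the reusable intermediate fact that $\overline{T}$ itself preserves exact squares on preorders. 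Both arguments use choice in the same place (your appeal to $T$ preserving the surjection $E_0\twoheadrightarrow[\alpha,\beta]$ versus the paper's $\theta$), and your translation back to the quotient posets is legitimate precisely because, under the hypothesis, $\mathbf{Rel}_T(X_1)$ is already a preorder, as you note at the outset.
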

\begin{proof}
This was proved in~\cite{calco2011} under the additional 
assumption that $T$ is finitary. Here, we present 
an argument valid for all $\Set$-functors. 

\medskip\noindent We start with the easy implication. Assume $T'$ preserves exact squares and consider a weak pullback in $\Set$
\begin{equation}
\label{weakpullback}
\vcenter{
\xymatrix{
E \ar[r]^{\alpha} \ar[d]_{\beta} %\ar@{}[1,1]|{=}
& X\ar[d]^f \\
Y\ar[r]_g & Z
}
}
\end{equation}
Then~Equation~\eqref{weakpullback} is mapped by $D$ to an exact square in $\Pos$, and $T'$ preserves such by hypothesis. Using the isomorphism $DT\cong T'D$, we conclude that 
\[
\xymatrix{
DT E \ar[r]^{DT\alpha} \ar[d]_{DT\beta} %\ar@{}[1,1]|{=}
& DTX\ar[d]^{DTf} \\
DTY\ar[r]_{DTg} & Z
}
\]
is an exact square of discrete posets, that is, a weak pullback in $\Set$. 

\bigskip\noindent Now, we assume that $T$ preserves weak pullbacks 
and we show that its posetification $T'$ preserves exact 
squares. Recall from Remark~\ref{wpb-exsq}\eqref{posetif=rel-lift}
that $T'X$ is the quotient of the preordered set 
$(TX_0, \mathsf{Rel}_T(X_1))$ since $T$ is assumed 
to preserve weak pullbacks.

\medskip\noindent In fact, it is easy to see that for each 
preordered set (poset) $X$, the construct 
$$
\overline{T}X=(TX_0, \mathsf{Rel}_T(X_1))
$$
yields a locally monotone functor $\overline{T}$ 
on the category 
$\mathsf{Preord}$ of preordered sets and monotone mappings.

\medskip\noindent The inclusion functor $\mathsf{Incl}: \Pos\to \mathsf{Preord}$ and 
its left adjoint, the quotient functor 
$\mathsf{Quot}:\mathsf{Preord}\to \Pos$ both preserve 
exact squares~\cite[Example~6.2]{rel-lift}, 
and the composite 
\[
\xymatrixcolsep{2.6pc}
\xymatrix{
\Pos
\ar[0,1]^-{\mathsf{Incl}}
&
\mathsf{Preord}
\ar[0,1]^-{\overline{T}}
&
\mathsf{Preord}
\ar[0,1]^-{\mathsf{Quot}}
&
\Pos
}
\]
is precisely $T'$. Consequently, it is enough to show 
that $\overline{T}$ preserves exact squares. 

\medskip\noindent Consider thus an exact square in $\Pre$:
\begin{equation}\label{eq:exactsquare}
\xymatrix{ E \ar[r]^{\alpha} \ar[d]_{\beta} \ar@{}[1,1]|{\swarrow}& X\ar[d]^f \\ Y\ar[r]_g & Z }
\end{equation}
and follow the steps below: 

\begin{enumerate}
\item First, 
the inequality 
$
\overline{T}(f) \circ \overline{T}(\alpha) 
\leq 
\overline{T}(g)\circ \overline{T}(\beta)$ 
holds since $\overline{T}$ is locally monotone.
\item 
Next, we form the three pullbacks in $\Set$ of the first diagram below, which by hypothesis will be mapped by $T$ to weak 
pullbacks. %To avoid overloaded notation, we shall slightly abuse and we denote by same symbol both the preordered set and its underlying set, and do the same for (monotone) mappings between them. 
As in Proposition~\ref{poset=refl_coins_discrete}, 
here $d^0, d^1:Z_1 \to Z$ stand for the projections from the set 
of comparable pairs to (the underlying set of) $Z$. 
\begin{equation}\label{3pb}
\vcenter{ \xymatrix@C=30pt@R=10pt{
& R  \ar[dl]_{r_0} \ar[dr]^{r_1} & \\
P \ar[dr]^{p_1} \ar[dd]_{p_0} & & Q \ar[dl]_{q_0} \ar[dd]^{q_1} \\
& Z_1 \ar@<-0.5ex>[dd]_{d^0} \ar@<0.5ex>[dd]^{d^1} & \\
X \ar[dr]_f  && Y \ar[dl]^g\\ 
& Z &}}
\quad 
\mapsto 
\quad
\vcenter{
\xymatrix@C=30pt@R=10pt{
& TR  \ar[dl]_{Tr_0} \ar[dr]^{Tr_1} & \\
TP \ar[dr]^{Tp_1}  \ar[dd]_{Tp_0} & & TQ \ar[dl]_{Tq_0} \ar[dd]^{Tq_1} \\
& TZ_1 \ar@<-0.5ex>[dd]_{Td^0} \ar@<0.5ex>[dd]^{Td^1} & \\
X \ar[dr]_{Tf}  && TY \ar[dl]^{Tg}\\ 
& Z &}}
\end{equation}
Explicitly, 
\begin{eqnarray*}
P \qquad 
&=&
\{(x,z)\in X\times Z \mid f(x)\leq z\}
\\
Q \qquad 
&=&
\{(z, y)\in Z\times Y \mid z\leq g(y)\}
\\
R \qquad 
&=&
\{(x,y)\in X \times Y \mid f(x)\leq g(y)  \} 
\\
r_0(x,y)
&=&
(x,g(y))
\\
r_1(x,y)
&=&
(f(x),y)
\end{eqnarray*}

\item From the description of $R$ above, notice that $R$ is non-empty (as we started from an exact square), and that given $(x,y) \in R$, there is some $w \in E$ such that $x\leq \alpha(w)$ and $\beta(w)\leq y$. Assuming the axiom of choice, fix such a $w\in E$ for each $(x,y)\in R$ and define a map $\theta:R\to E$ by $\theta(x,y)=w$. It can be considered monotone if $R$ is taken to be a discrete poset. 

\item Consider the cube below in $\Pre$, where $R,P,Q$ carry the discrete (pre)order.
\[\xymatrix@C=40pt{
R \ar[dd]_{r_1} \ar[rr]^{r_0} & & P \ar'[d][dd]_{p_1} \\ & E %\ar@{}[dl]|{\sswarrow} 
\ar@{<..}[ul]_{\theta} \ar[rr]^(.35){\alpha} \ar[dd]_(.35){\beta} & & X 
%\ar@{}[ddll]|(.35){\swarrow} 
\ar@{<-}[ul]_{p_0} \ar[dd]_(.35){f} \\ Q \ar'[r][rr]^(.48){q_0} \ar[dr]_{q_1} && Z_1 \ar@<0.35ex>[dr]^{d^0} \ar@<-0.35ex>[dr]_{d^1}  \\& Y \ar[rr]^{g} & & Z }
\]
The back, right-hand and bottom faces commute by~Equation~\eqref{3pb}. 
The front face is the exact square of~Equation~\eqref{eq:exactsquare}, 
in particular $f\circ \alpha \leq g\circ \beta$ holds. The 
remaining top and left-hand faces commute laxly, 
in the sense that following inequalities hold: 
\begin{equation}\label{cube}
p_0 \circ r_0 \leq \alpha \circ \theta \quad \mbox{ and } 
\quad 
\beta \circ \theta \leq q_1 \circ r_1
\end{equation}

\item 
We are now able to show that $\overline{T}$ 
preserves exact squares. To that end, let 
$u\in \overline{T}(X)$, $v\in \overline{T}(Y)$ 
such that 
$$
\overline{T}(f)(u)\leq \overline{T}(g)(v)
$$
in $\overline{T}(Z)=(TZ, \mathsf{Rel}_T(Z_1))$. 
That is, $u\in TX$, $v\in TY$ and there exists some 
$w\in T(Z_1)$ such that $Td^0 (w)=Tf(u)$ and  $Td^1(w)=Tg(v)$. 

As all the squares in the second diagram in~Equation~\eqref{3pb} 
are weak pullbacks, we can conclude that there is some 
$\bar w\in TR$ which is mapped to $u\in TX$, respectively $v\in TY$. 
Let $\omega =T\theta(\bar w)\in TE$. Then one can easily check 
using~Equation~\eqref{cube} that $u\leq T\alpha(\omega)$ and 
$T\beta(\omega)\leq v$ hold. 

All in one, we have showed that $\overline{T}$ 
maps an exact square to an exact square. Thus also 
the posetification $T'$ of $T$ preserves exact squares. 
\qedhere 
\end{enumerate}
\end{proof}

\begin{example}
\mbox{}\hfill
\begin{enumerate}
\item Let $T=\Id$ on $\Set$. Then its posetification 
is the identity functor on posets (recall that the discrete-poset functor $D$ is dense, see the last paragraph in Section~2 of~\cite{Street}).
\item If we take $T=\mathcal P_f$ to be the (finite) power-set functor, 
then its posetification is the (finitely generated) convex power-set functor, with the Egli-Milner order (see Example~\ref{ex:posetif_powerset}, but also~\cite{calco2011,kurz+velebil:presentations}). 
\item The collection of (finitary) 
Kripke polynomial endofunctors of $\Set$ 
is inductively defined as follows: 
$
T
::= 
\id \mid 
T_{X_0} \mid 
T_0 + T_1 \mid 
T_0 \times T_1 \mid 
T^A\mid \mathcal P_f
$, 
where $T_{X_0}$ denotes the constant functor to the set $X_0$; 
$T_0+T_1$ is the pointwise coproduct $X\mapsto T_0X+T_1X$;  
$T_0\times T_1$ is the pointwise product $X\mapsto T_0X\times T_1X$; 
and $T^A$ denotes the pointwise exponent functor $X\mapsto (TX)^A$, 
with the set $A$ being finite.

\noindent We have just mentioned above that the posetification of the identity functor is again the identity, while for the constant functor $T_{X_0}$ it is an easy exercise to check that the posetification is again a constant functor, this time to the discrete poset $DX_0$; the posetification of the coproduct $T_0+T_1$ maps a poset $X$ to the coproduct (in the category of posets) $T'_0X+T'_1X$, where $T'_0$ and $T'_1$ denote the posetifications of $T_0$, respectively $T_1$; and similarly for the product functors. Finally, the posetification of the exponent functor $T^A$ is ${T'}^{DA}$, where $T'$ stands for the posetification of $T$. 
 
\item Consider now the finitary probability functor $\mathsf{Prob}:\Set\to \Set$, given on objects by 
\[
\mathsf{Prob}(X)
=
\{p:X\to[0,1]\mid \sum_{x\in X} p(x)=1, \ \mathsf{supp}(p) <\infty\},
\]
where $\mathsf{supp}(p)=\{x\in X\mid p(x)\neq 0\}$, 
and by
\[
\mathsf{Prob}(f)(p)(y)
=
\sum_{y=f(x)} p(x) \ , \ \mbox{ for a function } f:X\to Y.
\] 
on morphisms.
Recall that $\mathsf{Prob}$ preserves weak pullbacks~\cite{vinkrutten}, thus its posetification $\mathsf{Prob}'$ can be described using the relation lifting as in Remark~\ref{wpb-exsq}. In fact, for the probability functor, it happens that the relation lifting of a partial order is not just a preorder, but even a partial order~\cite{baier}. Henceforth for a poset $X$, $\mathsf{Prob}'(X)$ has the underlying set $\mathsf{Prob}(X_0)$, ordered as follows: 
for $p, p'\in \mathsf{Prob}(X_0)$, $p\leq p'$ if and only if there 
is some $\omega\in \mathsf{Prob} (X_0\times X_0)$ such that 
$\sum_{x'\in X}\omega(x,x')=p(x)$ and 
$\sum_{x\in X}\omega(x,x')=p'(x')$, and 
$\omega(x,x')>0\Rightarrow x\leq x'$.
\end{enumerate}
\end{example}

\smallskip
%========================================================%
 
\subsection{Characterising functors $\Pos\to\Pos$ in discrete arities}
\label{sec:posetification2} \

\medskip\noindent
Recall from Proposition~\ref{poset=refl_coins_discrete} that we have denoted, for each poset $X$, by~\eqref{eq:poset_coins} the diagram 
\[
\xymatrix{
DX_1
\ar@<.5ex> [0,1]^-{Dd^1}
\ar@<-.5ex> [0,1]_-{Dd^0}
&
DX_0
}
\]
of discrete posets, where $X_0$ is the set of elements
of $X$, $X_1$ is the set of all pairs $(x,x')$ with
$x\leq x'$ in $X$, while the maps $d^0$, $d^1$ are the obvious
projections.

\begin{theorem}\label{thm:pres-nerves}
For $T':\Pos\to\Pos$, the following are equivalent:
\begin{enumerate}
\item
There exists a functor $T:\Set\to\Set$ such that
$T'\cong \Lan_{D}{(DT)}$, i.e., $T'$ is a posetification
of $T$.
\item
$T'$ preserves discrete posets and coinserters of
all diagrams~\eqref{eq:poset_coins}.
\end{enumerate}
\end{theorem}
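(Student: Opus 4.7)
The strategy rests on the explicit coinserter construction of the posetification given in Theorem~\ref{thm:posetification}: for any $T:\Set\to\Set$, the functor $\Lan_D(DT)$ evaluated at a poset $X$ is built as the coinserter of the parallel pair $DTd^0_X, DTd^1_X : DTX_1 \rightrightarrows DTX_0$ derived from the nerve~\eqref{eq:poset_coins}. This is exactly the image (up to canonical isomorphism) of \eqref{eq:poset_coins} under the prospective posetification, so the equivalence essentially amounts to identifying these two descriptions of the same coinserter.

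For the implication (1)$\Rightarrow$(2), suppose $T' \cong \Lan_D(DT)$. Since $D$ is fully faithful, the unit $\alpha : DT \to T'D$ of the left Kan extension is an isomorphism by Remark~\ref{rem:lan}; in particular $T'$ sends discrete posets to discrete posets. To see that $T'$ preserves the coinserter~\eqref{eq:poset_coins}, I would unpack the recipe for $T'f$ used in the proof of Theorem~\ref{thm:posetification} applied to $f = c_X$: naturality of $\alpha$ rewrites $T'(Dd^i_X)$ as $DTd^i_X$, and the mediating-map construction identifies $T'c_X$ with the coinserter map $e_X$ appearing in~\eqref{eq:def-T'}. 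The image of~\eqref{eq:poset_coins} under $T'$ is then, up to the isomorphism $\alpha$, precisely the defining coinserter of $T'X$.

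For (2)$\Rightarrow$(1), define $T : \Set \to \Set$ by $T = V T' D$. Because $T'$ sends discrete posets to discrete posets, there is a natural isomorphism $\phi : DT \cong T'D$ (on a discrete poset one can freely insert $DV$). By the universal property of left Kan extensions as recalled in Remark~\ref{rem:lan}, $\phi$ induces a natural transformation $\hat\phi : \Lan_D(DT) \to T'$, and it remains to prove each component $\hat\phi_X$ is an isomorphism. For this, I would observe that both $\Lan_D(DT)X$ and $T'X$ are coinserters of the \emph{same} parallel pair $DTd^0_X, DTd^1_X$: the former by the construction in Theorem~\ref{thm:posetification}, the latter by applying $T'$ to~\eqref{eq:poset_coins}, using hypothesis (2) to keep it a coinserter, and identifying $T'(DX_i)$ with $DTX_i$ via $\phi$. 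Uniqueness of mediating maps between two coinserters of a single diagram then forces $\hat\phi_X$ to be an isomorphism, and naturality in $X$ follows automatically.

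The main obstacle I anticipate is the bookkeeping in the last step: one must verify that the natural transformation $\hat\phi$ extracted from $\phi$ via the Kan-extension universal property genuinely coincides with the comparison between the two coinserter presentations of $T'X$. Concretely, I would check that under $\phi$, the cocone $T'c_X : T'DX_0 \to T'X$ on $T'Dd^0_X, T'Dd^1_X$ pulls back to a cocone on $DTd^0_X, DTd^1_X$ whose induced mediating arrow out of $\Lan_D(DT)X$ is exactly $\hat\phi_X$. Once this matching is in place, the two descriptions coincide and both implications of the theorem close up.
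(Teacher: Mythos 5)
Your proof is correct, but it takes a different route from the paper's. The paper's argument is entirely top-down: it first shows that the coinserters of the diagrams~\eqref{eq:poset_coins} constitute a \emph{density presentation} of $D:\Set\to\Pos$ in the sense of \cite[Section~5.4]{kelly:enriched} --- the key verification being that each $\Pos(DS,-)\cong(-)^S$ preserves these coinserters, so they are $D$-absolute --- and then both implications drop out of \cite[Theorem~5.29]{kelly:enriched}, which characterises functors of the form $\Lan_D(-)$ as exactly those preserving the colimits of a density presentation. You instead argue bottom-up from the explicit coinserter construction of $\Lan_D(DT)$ in Theorem~\ref{thm:posetification}: for $(1)\Rightarrow(2)$ you identify $T'c_X$ (up to the unit isomorphism $\alpha$) with the defining coinserter map $e_X$ of~\eqref{eq:def-T'}, and for $(2)\Rightarrow(1)$ you set $T=VT'D$ and compare two coinserters of the same parallel pair $DTd^0_X, DTd^1_X$; the bookkeeping you flag does close up, since the mediating map $\hat\phi_X$ produced by the Kan-extension correspondence in step~(4a) of the proof of Theorem~\ref{thm:posetification} is precisely the unique arrow satisfying $\hat\phi_X\circ e_X = T'c_X\circ\phi_{X_0}$, i.e.\ the canonical comparison between the two coinserters, hence an isomorphism. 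What each approach buys: yours is self-contained and elementary, needing only the construction already carried out in Theorem~\ref{thm:posetification} rather than the general theory of density presentations; the paper's is shorter and strictly more general --- because the density-presentation argument makes no use of $H=DT$ being $\Set$-valued, it immediately yields Theorem~\ref{thm:discretearities} (the characterisation of arbitrary $\Lan_D H$ for $H:\Set\to\Pos$) by simply dropping the discreteness clause, whereas your argument would need to be rerun with $H$ in place of $DT$.
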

\begin{proof}
We prove first that the coinserters of diagrams~\eqref{eq:poset_coins} form the density presentation of $D:\Set\to\Pos$ in the sense of \cite[Section~5.4]{kelly:enriched}. Indeed, all coinserters of~\eqref{eq:poset_coins} exist in $\Pos$, the category
$\Pos$ is the closure of $\Set$ under these coinserters,
and the coinserters of~\eqref{eq:poset_coins} are preserved
by the functor $\Pos(DS,-):\Pos\to \Pos$. To see the latter, observe that for any set $S$, the poset
$\Pos(DS,X)=X^S$ is a coinserter of
\begin{equation}\label{eq:X^S}
\xymatrix{
(X_1)^S
\ar@<.5ex> [0,1]^-{(d^1)^S}
\ar@<-.5ex> [0,1]_-{(d^0)^S}
&
(X_0)^S
}
\end{equation}
We prove now that~(1) implies~(2). Since $TD\cong DT'$
holds, $T$ preserves discrete posets. By~Equation~\eqref{eq:X^S}, the
collection of all coinserters of~\eqref{eq:poset_coins} forms a density
presentation of $D$, hence by \cite[Theorem~5.29]{kelly:enriched},
$T$ preserves coinserters of all diagrams in~\eqref{eq:poset_coins}.

\medskip\noindent
(2) implies (1). Since $T'$ is assumed to preserve
discrete posets, we may assume that $T'D\cong DT$
for some functor $T:\Set\to\Set$. Furthermore, by
\cite[Theorem~5.29]{kelly:enriched}, $T'\cong\Lan_{D}{(T'D)}$
holds. That is, $T'\cong\Lan_{D}{(DT)}$ holds.
\end{proof}

\medskip\noindent Recall that by Definition~\ref{def:discretearities}, a functor has a presentation in discrete arities if it is of the form $\Lan_{D}{H}$ for some $H:\Set\to\Pos$. Then we  can drop in the theorem above the requirement that $T$ preserves discrete posets to obtain the following.

\begin{theorem}\label{thm:discretearities}
A functor $\Pos\to\Pos$ has a presentation in discrete arities 
if and only if it preserves coinserters
of all diagrams~\eqref{eq:poset_coins}.
\end{theorem}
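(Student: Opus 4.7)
The plan is to follow the same template as the proof of Theorem~\ref{thm:pres-nerves}, but now removing the bookkeeping that ensured $H$ factored as $DT$ through the discrete functor. The key ingredient, already verified in the proof of Theorem~\ref{thm:pres-nerves}, is that the family of coinserters of the truncated nerves~\eqref{eq:poset_coins} constitutes a \emph{density presentation} of $D:\Set\to\Pos$ in the sense of \cite[Section~5.4]{kelly:enriched}. I would simply invoke this fact rather than reprove it: the three requirements --- existence of the coinserters in $\Pos$, $\Pos$ being the closure of $\Set$ under them, and their preservation by $\Pos(DS,-)$ for each set $S$ (via the isomorphism $\Pos(DS,X)=X^S$ together with diagram~\eqref{eq:X^S}) --- have already been established.

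For the forward direction, I would assume $T'\cong\Lan_D H$ for some $H:\Set\to\Pos$. Since $D$ is fully faithful, Remark~\ref{rem:lan}(2) gives the unit $H\to(\Lan_D H)D=T'D$ as an isomorphism, so $T'\cong\Lan_D(T'D)$. Then \cite[Theorem~5.29]{kelly:enriched}, applied to the density presentation of $D$ recalled above, tells us that $T'$ preserves each of the coinserters~\eqref{eq:poset_coins}.

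For the converse, assume $T'$ preserves coinserters of all diagrams~\eqref{eq:poset_coins}. Applying~\cite[Theorem~5.29]{kelly:enriched} in the other direction yields $T'\cong\Lan_D(T'D)$; setting $H:=T'D:\Set\to\Pos$ then witnesses that $T'$ has a presentation in discrete arities.

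The main (and essentially only) subtle step is invoking \cite[Theorem~5.29]{kelly:enriched} correctly: one must check that its hypotheses (full faithfulness of $D$, smallness/admissibility of the density presentation, cocompleteness of the target) are all met in the $\Pos$-enriched setting. The first is immediate, the second was already handled in the proof of Theorem~\ref{thm:pres-nerves}, and the third holds because $\Pos$ is $\Pos$-cocomplete. No further computation should be needed; in effect, the theorem is a direct corollary of Theorem~\ref{thm:pres-nerves} once one observes that the restriction ``$T'$ preserves discrete posets'' was only used there to ensure the factorisation $H\cong DT$, which is precisely what we are now allowed to drop.
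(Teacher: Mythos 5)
Your proposal is correct and matches the paper's (implicit) argument: the paper derives Theorem~\ref{thm:discretearities} from the proof of Theorem~\ref{thm:pres-nerves} by simply dropping the discreteness-preservation clause, which is exactly what you do via the density presentation of $D$ and \cite[Theorem~5.29]{kelly:enriched}.
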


\noindent Similarly, recalling Definition~\ref{def:strongly_finitary} of a strongly finitary functor, one has: 

\begin{theorem}
\mbox{}\hfill
\begin{enumerate}
\item A functor $\Pos\to \Pos$ is strongly finitary and preserves 
discrete posets if and only if it is the posetification of 
a finitary functor $\Set\to \Set$. 
\item A functor $\Pos\to \Pos$ is strongly finitary if and only 
if it is a left Kan extension of a finitary functor 
$\Set\to\Pos$.
\end{enumerate}
\end{theorem}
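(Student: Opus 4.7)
The plan is to reduce both parts to formal manipulations of composed Kan extensions, using the factorisation $\iota = D\circ I$ where $I:\Set_f\to\Set$ is the inclusion and $D:\Set\to\Pos$ is the discrete-poset functor. The engine of the argument is the composition formula $\Lan_{DI}F \cong \Lan_D(\Lan_I F)$, which lets us translate ``$T'$ is strongly finitary'' (i.e., $T'\cong\Lan_\iota(T'\iota)$) into ``$T'$ is $\Lan_D H$ for some finitary $H:\Set\to\Pos$''.

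For part (2), in the \emph{only if} direction I would set $H := \Lan_I(T'\iota)$, which is automatically finitary (being a left Kan extension along $I$ from the finitary cocompletion $\Set_f$ of $\Set$), and then obtain $T' \cong \Lan_D H$ by the composition formula applied to $F = T'\iota$. Conversely, given $T' = \Lan_D H$ with $H$ finitary, I would write $H \cong \Lan_I(HI)$, compose Kan extensions to get $T' \cong \Lan_\iota(HI)$, and identify $HI$ with $T'\iota$ via the unit isomorphism $H \cong (\Lan_D H)D = T'D$, which holds because $D$ is fully faithful (Remark~\ref{rem:lan}(2)).

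Part (1) then reduces to (2) together with the observation that preservation of discrete posets lets us take $H$ of the form $DT$ with $T:\Set\to\Set$. In the easier direction, if $T$ is finitary then $H := DT:\Set\to\Pos$ is finitary; part~(2) yields strong finitarity of $T' = \Lan_D(DT)$, and discrete posets are preserved because $T'D\cong DT$ by the same unit isomorphism. Conversely, if $T'$ is strongly finitary and sends discrete posets to discrete posets, then $T'D\cong DT$ for some $T:\Set\to\Set$; this $T$ is finitary because $T'$ preserves filtered colimits (sifted weights subsuming filtered ones) and filtered colimits of discrete posets in $\Pos$ are discrete and computed as in $\Set$ (see Section~\ref{sec:sifted}), so finitarity of $T'$ transfers to $T$ through the iso $T'D\cong DT$. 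Then part~(2) applied to $H = DT$ identifies $T'$ with the posetification $\Lan_D(DT)$.

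The main subtlety I expect is the interplay between ordinary and $\Pos$-enriched Kan extensions: the adjunction $D\dashv V$ is only ordinary (the text notes that $V$ fails to be locally monotone), so $D$ cannot a priori be moved through $\Pos$-enriched colimits. The saving grace is that $\Set_f$ is discretely $\Pos$-enriched, so Kan extensions along $I$ agree with their ordinary counterparts, and commuting $D$ through the filtered colimit presenting a finitary functor uses only the ordinary adjunction $D\dashv V$. Once this point is checked carefully, all the remaining manipulations of Kan extensions are formal and the two biconditionals fall out.
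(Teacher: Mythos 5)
Your proposal is correct, and it follows the route the paper itself presupposes: the theorem is stated without an explicit proof, but the intended mechanism is exactly your composition of Kan extensions along $\iota = D\circ I$ (cf.\ Remark~\ref{RemPos}(1), where $\Lan_D(DT)\cong\Lan_{DI}(DTI)$ for finitary $T$ is invoked), together with the unit isomorphism of Remark~\ref{rem:lan}(2) for the fully faithful $D$. Your handling of the ordinary-versus-enriched subtlety is also sound, since the Kan extension along $I$ is a conical (filtered) colimit preserved by the ordinary left adjoint $D$ and automatically enriched in the cartesian closed $\Pos$.
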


\noindent Having in the above characterized functors $\Pos\to \Pos$ in discrete arities, we now turn to a special property that these have. First, recall that every endofunctor on $\Set$ preserves surjections~\cite{trnkova69} (assuming the axiom of choice). We shall establish below the correspondent result for $\Pos$, recalling again Definition~\ref{def:discretearities}. 

\begin{proposition}\label{prop:T'-pres-monot-surj}
Let $T':\Pos\to\Pos$ have a presentation in discrete arities.
Then $T'$ preserves monotone surjections between posets. 
\end{proposition}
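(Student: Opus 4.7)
The strategy is to exploit the coinserter presentation of $T'X$ that comes from $T'$ being a left Kan extension along $D$. My plan is to first write $T'\cong\Lan_D H$ for some functor $H:\Set\to\Pos$, and to record from Remark~\ref{rem:lan}(2) that since $D:\Set\to\Pos$ is $\Pos$-enriched fully faithful, the unit $H\to T'D$ is a natural isomorphism; in particular $T'DS\cong HS$ for every set $S$.

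Given a monotone surjection $f:X\to Y$, I would consider the truncated nerves $N_X$, $N_Y$ of Proposition~\ref{poset=refl_coins_discrete}, which are connected by the discrete maps $Df_0,Df_1$ on the set of elements and the set of comparable pairs, respectively. Applying $T'$ and invoking Theorem~\ref{thm:discretearities}, which guarantees preservation of coinserters of truncated nerves, while also using $T'D\cong H$, one obtains the commutative ladder
\[
\xymatrix{
HX_1 \ar@<.5ex>[r]\ar@<-.5ex>[r] \ar[d]_{Hf_1} & HX_0 \ar[r]^-{e_X} \ar[d]^{Hf_0} & T'X \ar[d]^{T'f} \\
HY_1 \ar@<.5ex>[r]\ar@<-.5ex>[r] & HY_0 \ar[r]_-{e_Y} & T'Y
}
\]
whose rows are coinserters in $\Pos$.

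Two facts would then drive the argument. First, the explicit construction of coinserters in Remark~\ref{rem:construction_coinserter} shows that both $e_X$ and $e_Y$ are surjective on underlying sets, as the coinserter is built as a quotient of its codomain. The step I would regard as the main point is the claim that $Hf_0$ is surjective in $\Pos$. To establish this, compose $H$ with the (ordinary, not locally monotone) forgetful functor $V:\Pos\to\Set$ to obtain the $\Set$-endofunctor $VH$; since $f_0:X_0\to Y_0$ is surjective (because $f$ is a monotone surjection) and every endofunctor of $\Set$ preserves surjections by Trnkov\'a's theorem recalled just before the proposition, $V(Hf_0)=(VH)(f_0)$ is surjective, hence so is $Hf_0$ in $\Pos$.

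A straightforward diagram chase then completes the argument: the composite $e_Y\circ Hf_0$ is surjective as a composition of surjections, by commutativity it equals $T'f\circ e_X$, and since $e_X$ is itself surjective we conclude that $T'f$ is surjective, as required.
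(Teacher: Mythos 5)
Your proof is correct, but it takes a genuinely different route from the one in the paper. The paper presents the surjection $c:X\to Y$ itself as a reflexive coinserter of the comma object of $c$ with itself, and then shows that $T'$ preserves \emph{that} coinserter by verifying it is $D$-absolute (preserved by $\Pos(DS,-)$ for every set $S$) and appealing to Kelly's Theorem~5.29; this yields the stronger conclusion that $T'$ preserves the whole coinserter presentation of $c$, of which surjectivity of $T'c$ is a corollary. You instead keep the coinserter presentation of the \emph{objects} $T'X$ and $T'Y$ via the truncated nerves of Proposition~\ref{poset=refl_coins_discrete}, use Theorem~\ref{thm:discretearities} to know these are preserved, and reduce the surjectivity of the middle vertical map $Hf_0$ to Trnkov\'a's theorem applied to the $\Set$-endofunctor $VH$. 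All the individual steps check out: the unit $H\to T'D$ is indeed invertible since $D$ is fully faithful (Remark~\ref{rem:lan}(2)); the ladder commutes by naturality of that isomorphism applied to $f\circ c_X=c_Y\circ Df_0$; the coinserter maps $e_X,e_Y$ are surjective by the construction in Remark~\ref{rem:construction_coinserter} (though in fact you only need surjectivity of $e_Y\circ Hf_0=T'f\circ e_X$ to conclude that $T'f$ is onto, so the surjectivity of $e_X$ is not actually used). Your argument is the more elementary of the two and makes explicit use of the $\Set$-level fact recalled just before the proposition, which the paper cites only as motivation; the paper's argument is slightly more work but delivers the extra information that $T'$ preserves the coinserter~\eqref{surj=coins_comma}, not merely the epimorphism.
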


\begin{proof}
Let $c:X \to Y$ be a surjective monotone map between posets. Then it is easy to see that $c$ is the (reflexive) coinserter of the comma object of $c$ with itself
\begin{equation}\label{surj=coins_comma}
\xymatrix{
P
\ar@<-0.3ex>[r]_{d^1} \ar@<0.3ex>[r]^{d^0}
& 
X
\ar[r]^c
&
Y}
\end{equation}
where $P$ is the poset of all pairs $(x,x')$ such that $c(x)\leq c(x')$, ordered component-wise, and $d^0,d^1$ are the canonical projections. 

We want to show that $T'c$ is again surjective. In fact, we shall see more: that $T'$ preserves the coinserter~\eqref{surj=coins_comma}. As $T'$ is a left Kan extension along $D:\Set \to \Pos$, according to \cite[Theorem~5.29]{kelly:enriched}, it is enough to check that~\eqref{surj=coins_comma} is $D$-absolute, that is, that \eqref{surj=coins_comma} is preserved by $\Pos(DS,-)$ for every set $S$. To see the latter, let $S$ be an arbitrary set and form the diagram
\[
\xymatrix{  
P^S 
\ar@<-0.3ex>[r]_{(d^1)^S} \ar@<0.3ex>[r]^{(d^0)^S}
& 
X^S
\ar[r]^{c^S}
&
Y^S}
\]
To prove that it is a coinserter, consider any monotone map $h : X^S\to Z$, with $h\circ (d^0)^S \leq h \circ (d^1)^S $. Using the surjectivity of $c$, define $k : Y^S \to Z$ by $k((y_i)_{i \in S}) = h((x_i)_{i\in S})$, where $c((x_i)_{i\in S}) = (y_i)_{i\in S}$. Using the construction of the comma object $P$, one can easily check that the above does not depend on the choice of $(x_i)_{i\in S}$,
%Indeed, suppose that e(x'_i) = e(x_i). Then both (x'_i,x_i) and (x_i,x'_i) are in P^S. Thus h (x_'i) \leq h (x_i) and h (x_i) \leq h (x'_i). Thus h (x_'i) = h (x_i).
and that the map $k$ thus defined is indeed monotone. Now the universal property of coinserters follows easily. 
%Indeed, suppose that e(x'_i) = e(x_i). Then both (x'_i,x_i) and (x_i,x'_i) are in P^S. Thus h (x_'i) \leq h (x_i) and h (x_i) \leq h (x'_i). Thus h (x_'i) = h (x_i).
\end{proof}
%\end{ab}

\begin{example}
We give an example of a $\Pos$-functor which does not preserve surjections (and the definition of which involves a non-discrete arity). Let $T':\Pos\to\Pos$ be the functor mapping a poset $X$ to the poset of monotone maps $[\Two,X]$ (equivalently, it could be written more intuitively as $X^\Two$, the poset of \emph{ordered pairs} in $X$, with the component-wise order). The fact that $[\Two,X]$ only contains monotone maps has as a consequence that the surjection $f:D2\to \Two $ mapping each of the $0,1$ to itself, see below
\[
\xymatrix@R=3pt@C=20pt{
&&&& 1 \\
0  \quad 1  & \ar@{->>}[rr]^f  &&& \\
&&&& 0 \ar@{-}[uu]
}
\]
is not preserved by $T'$. Indeed $T'D2$ has two elements, while $T'\Two$ has three elements.
\end{example}

%========================================================%

\section{Presenting functors on ordered varieties}\label{ak} 

\medskip\noindent
Coming back to the introduction, we remind the reader that our overall strategy is---starting with a functor $T:\Set\to\Set$ for the type of coalgebras---to obtain from $T$ the Boolean logic $L:\BA\to\BA$ by duality, and to obtain from the posetification $T'$ of $T$, again by duality, the positive logic $L':\DL\to\DL$. The relationship between $L$ and $L'$ will be studied in the next section. Here, we are going to make sure that the functors $L$ and $L'$ obtained by abstract categorical constructions actually do have concrete presentations by operations and equations and thus correspond indeed to modal extensions of Boolean and positive propositional logic. 

\medskip\noindent
In the case of $L$, assuming that $L$ preserves sifted colimits, this is known already from~\cite{kurz-rosicky:strong} and we shall recall it below.
In the case of $L'$, we need to prove the enriched analogue 
of~\cite{kurz-rosicky:strong}, which we shall obtain following the enriched generalization of~\cite{kurz-rosicky:strong} 
given in~\cite{kurz+velebil:presentations}. In particular, this enriched generalization will guarantee that  $L'$ can be presented by \emph{monotone} operations.
As a final twist, this enriched generalization would give us 
a presentation of $L'$ using \emph{inequations} over general ordered 
varieties. Therefore, it is important for us to show that, owing to the special nature of $\DL$, the enriched functor $L'$ can equally be presented as the underlying {\em ordinary\/} 
functor $L'_o$, which in turn has a presentation that does not rely on inequations.

%========================================================%

\subsection{Equational presentations of functors} \label{sec:presentations}

We have seen, in Example~\ref{exle:Lpcal}, a presentation of  a functor $L:\BA\to\BA$ and in Example~\ref{exle:L'pcal} a presentation of  a functor $L':\DL\to\DL$. Whereas it may be clear from these examples what we mean by a presentation, it is worth spending the effort to give a formal definition.

In what follows, $\mathscr A$ will denote a variety of algebras for a finitary signature. By a slight abuse of notation, we shall use 
the same notation as in case of the variety $\BA$ for the (monadic) adjunction 
\[
F\dashv U:\mathscr A\to \Set
\]
Denote by $\Sigma_n$ the set of $n$-ary modal operators, and by $\Gamma_n$ the set of equations in $n$ free variables. For instance, in Example~\ref{exle:Lpcal} we have $\Sigma_1=\{\Box\}$ and $\Sigma_n=\emptyset$ for $n\not=1$, and $\Gamma_0=\{(\Box \top,\top)\}$, $\Gamma_2=\{(\Box(a\wedge b),\Box a \wedge \Box b)\}$ and  $\Gamma_m=\emptyset$ for $m\not=0,2$. Given a signature $\Sigma = (\Sigma_n)_{n<\omega}$, we write $\hat\Sigma:\Set\to\Set$ for the corresponding {\em polynomial\/} functor 
$$
X\mapsto \coprod_{n<\omega}\Set(n,X)\bullet \Sigma_n
$$ 
where $\bullet$ denotes the copower, see Section~\ref{sec:3.C}. 
Observe that with this notation, in Example~\ref{exle:Lpcal} we have that $\Gamma_n\subseteq UF\hat\Sigma U Fn\times UF\hat\Sigma U Fn$ (interpret $UFX$ as the set of Boolean terms on $X$-generators).

\begin{definition}\cite[Definition~6]{bons-kurz:fossacs06}
\label{def:presentation}
A functor $L$ on a variety $\mathscr A$ has a 
\emph{presentation by operations and equations}, or, shortly, 
a \emph{presentation}, if there are signatures $\Sigma$ and $\Gamma$, 
with  $\Gamma _n\subseteq UF\hat\Sigma U Fn\times UF\hat\Sigma U Fn$,
such that for all $A\in\mathcal A$ the following diagram, where $n$
ranges over natural numbers and $v$ ranges over all valuations $Fn\to A$ (of $n$-variables in $A$)
\begin{equation}\label{eq:presentation1}
\xymatrix{
F\hat\Gamma n \ar@<.5ex>[r] \ar@<-.5ex>[r] & F\hat\Sigma U Fn \ar[r]^{F\hat\Sigma U v} & F\hat\Sigma UA \ar[r] & LA
}
\end{equation}
is a joint coequalizer.
\end{definition}

\noindent 
Recall that a joint coequalizer of a family of parallel pairs 
with common codomain is an arrow which is a coequalizer of each 
pair of maps in that family.

\medskip\noindent The elements of the sets $\Gamma_n$, $n<\omega$, are often called the \emph{axioms}, or \emph{equations} of the presentation.

\begin{remark}[\textbf{Axioms of rank $1$}]
We see that the format of the equations (i.e., the elements of $\Gamma _n$) requires them to be pairs in $UF\hat\Sigma U Fn\times UF\hat\Sigma U Fn$, that is, every variable must be under exactly one modal operator. Such equations are called equations,  or axioms, \emph{of rank $1$}. For example, if we wanted to \emph{extend} $\DL$ by negation (thinking of negation as a unary modal operator), then $\neg(a\wedge b)=\neg a\vee\neg b$ and $\neg 1 = 0$ are equations of rank $1$, but $a\wedge\neg a = 0$ is not. The importance of equations of rank $1$ is that they capture algebras for a \emph{functor} (as opposed to general equations which correspond to algebras for a monad), see Theorem~\ref{th:5.4} below.  
\end{remark}

\noindent For proofs of the following proposition and theorem
see~\cite[Theorem~4.7]{kurz-rosicky:strong}.

\begin{proposition}
A functor $L$ on a variety $\mathscr A$ 
has a presentation if and only if there are polynomial functors 
$\hat\Sigma, \hat\Gamma:\Set\to\Set$ such that 
$L$ is a coequalizer
\begin{equation*}\label{eq:presentation}
\xymatrix{
F\hat\Gamma U \ar@<.5ex>[r] \ar@<-.5ex>[r] & F\hat\Sigma U  \ar[r] & L
}
\end{equation*}
in the category of endofunctors of $\mathscr A$.
\end{proposition}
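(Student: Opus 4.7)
The plan is to recognise the two formulations as expressing the same universal property, the only subtlety being a combinatorial translation between a joint coequaliser indexed over valuations and a single coequaliser in the functor category $[\mathscr A,\mathscr A]$, bearing in mind that colimits in $[\mathscr A,\mathscr A]$ are computed pointwise.

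For the forward direction, given $(\Sigma,\Gamma)$ as in Definition~\ref{def:presentation}, I would first construct two natural transformations $\ell,r: F\hat\Gamma U\rightrightarrows F\hat\Sigma U$. By the adjunction $F\dashv U$ it suffices to produce natural transformations $\hat\Gamma U\to UF\hat\Sigma U$. At an algebra $A$, a generator of $\hat\Gamma UA$ is a triple $(m,\gamma,w)$ with $\gamma=(\gamma_1,\gamma_2)\in\Gamma_m\subseteq UF\hat\Sigma UFm\times UF\hat\Sigma UFm$ and $w:m\to UA$; writing $\bar w:Fm\to A$ for its adjoint transpose, I would set $\ell_A(m,\gamma,w)=UF\hat\Sigma U\bar w(\gamma_1)$ and $r_A(m,\gamma,w)=UF\hat\Sigma U\bar w(\gamma_2)$, with naturality in $A$ verified routinely.

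For the reverse direction, given a coequaliser $F\hat\Gamma U\rightrightarrows F\hat\Sigma U\to L$ in $[\mathscr A,\mathscr A]$, the signature $\Sigma=(\Sigma_n)$ is read off from the polynomial structure of $\hat\Sigma$. Transposing the two natural transformations under $F\dashv U$, evaluating at $Fn$, and restricting along the canonical map $\Gamma_n\hookrightarrow\hat\Gamma n\to\hat\Gamma UFn$ (induced by the unit $\eta_n:n\to UFn$), I would obtain two maps $\Gamma_n\rightrightarrows UF\hat\Sigma UFn$, which together cut out $\Gamma_n\subseteq UF\hat\Sigma UFn\times UF\hat\Sigma UFn$.

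The main obstacle in both directions is identifying the single coequaliser of $F\hat\Gamma UA\rightrightarrows F\hat\Sigma UA$ with the joint coequaliser of the family $F\hat\Gamma n\rightrightarrows F\hat\Sigma UFn\to F\hat\Sigma UA$ indexed over all $n<\omega$ and all $v:Fn\to A$. The joint version is equivalently the single coequaliser of $\coprod_{n,v}F\hat\Gamma n\rightrightarrows F\hat\Sigma UA$, whose source equals $F\bigl(\coprod_n (UA)^n\bullet\hat\Gamma n\bigr)$ since $F$ preserves coproducts and copowers. I would then exhibit a canonical surjective comparison $\coprod_n (UA)^n\bullet\hat\Gamma n\twoheadrightarrow\hat\Gamma UA$ that absorbs the redundancy of routing an $m$-ary axiom through an intermediate arity $n\ge m$, and check that the two parallel pairs factor through one another via this surjection. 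Since epimorphisms cancel on the left of coequalisers, the two formulations yield the same quotient, and both directions of the proposition follow.
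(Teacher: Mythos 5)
Your proof is correct. The paper itself offers no argument for this proposition, deferring to \cite[Theorem~4.7]{kurz-rosicky:strong}, and your reduction is exactly the standard one: the joint coequalizer over all $n$ and $v:Fn\to A$ is the coequalizer of a single pair out of $\coprod_{n,v}F\hat\Gamma n\cong F\bigl(\coprod_n (UA)^n\bullet\hat\Gamma n\bigr)$, which factors through the split epimorphism onto $F\hat\Gamma UA$ compatibly with the two parallel pairs, so the two quotients agree pointwise and hence (colimits in $[\mathscr A,\mathscr A]$ being pointwise) in the functor category. The only point worth making explicit in a write-up is that the quotient maps $F\hat\Sigma UA\to LA$ of Definition~\ref{def:presentation} must be taken natural in $A$ for the forward direction, but this is implicit in the definition and your construction of $\ell,r$ by transposition handles the rest.
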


\medskip\noindent Recall that any ordinary variety  $\mathscr{A}$ 
can be presented by a signature $\Sigma_{\mathscr{A}}$ 
and equations $E_{\mathscr{A}}$. For instance the variety $\DL$ 
is presented by the constants $\bot$, $\top$, the binary operations 
$\wedge$, $\vee$ and the usual equations defining distributive 
lattices, see e.g.~\cite{davey-priestley}.

\begin{theorem}
\label{th:5.4}
Let $L$ be an endofunctor of a variety $\mathscr{A}$. Let 
$\mathscr A$ be presented by a signature $\Sigma_{\mathscr{A}}$ 
and equations $E_{\mathscr{A}}$. Then:
\begin{enumerate}
\item If $L$ has a presentation $\langle \Sigma,\Gamma\rangle$, 
then the category of $L$-algebras is isomorphic to the category 
of algebras for the signature $\Sigma_{\mathcal{A}}+\Sigma$ 
satisfying the equations  $E_{\mathcal{A}}$ and $\Gamma$.
\item 
An endofunctor $L$ of a variety $\mathscr A$ has a presentation  
if and only if it preserves ordinary sifted colimits. 
\end{enumerate}
\end{theorem}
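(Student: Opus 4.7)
The plan is to address the two parts of the theorem separately, relying in both on the adjunction $F \dashv U : \mathscr A \to \Set$ and on the characterization of $\mathscr A$ as the free sifted cocompletion of $\mathscr A_{\ff}$.

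For part (1), I would use the universal property of the joint coequalizer~\eqref{eq:presentation1} defining $LA$. An $L$-algebra structure $\alpha : LA \to A$ extends uniquely to an arrow $F\hat\Sigma UA \to A$ that coequalizes all instances of $\Gamma$, and conversely any such coequalizing arrow descends through~\eqref{eq:presentation1} to a unique $L$-algebra structure. By $F \dashv U$, an arrow $F\hat\Sigma UA \to A$ corresponds bijectively to a map $\hat\Sigma UA \to UA$ in $\Set$, which is precisely an interpretation of the operations in $\Sigma$ on the underlying set of $A$. The requirement to coequalize the parallel pair with domain $F\hat\Gamma n$ translates, under the same adjunction, exactly into satisfaction of the equations in $\Gamma_n$ at every valuation $v : Fn \to A$. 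Combined with the underlying $\mathscr A$-structure, this gives an algebra for $\Sigma_{\mathscr A}+\Sigma$ satisfying $E_{\mathscr A}+\Gamma$. The correspondence is natural in $A$ and carries $L$-algebra homomorphisms to $(\Sigma_{\mathscr A}+\Sigma)$-homomorphisms and back, yielding an isomorphism of categories.

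For the forward direction of part (2), I would observe that each constituent of the presentation preserves sifted colimits. The forgetful $U$ preserves them because $\mathscr A$ is monadic over $\Set$ via a finitary monad. A polynomial functor $\hat\Sigma = \coprod_n \Set(n,-) \bullet \Sigma_n$ preserves sifted colimits because, using the characterization of sifted colimits in $\Set$ as those commuting with finite products, and the fact that each $\Set(n,-)$ with $n$ finite is an $n$-fold product of identities, preservation reduces to commutation with finite products. Finally $F$ preserves all colimits as a left adjoint. Hence $F\hat\Sigma U$ preserves sifted colimits, and since such colimits in $[\mathscr A,\mathscr A]$ are pointwise and commute with coequalizers, so does the coequalizer $L$ of~\eqref{eq:presentation1}.

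The main obstacle is the backward direction: extracting a concrete presentation from bare preservation of sifted colimits. The plan is to exploit Lemma~\ref{dense lemma} and Corollary~\ref{cor:when L is lan}, which give $L \cong \Lan_{\mathbf J}(L\mathbf J)$ with $\mathbf J : \mathscr A_{\ff} \hookrightarrow \mathscr A$. From this one reads off $\Sigma$ by choosing, for each $n$, a set $\Sigma_n$ of generators of the $\mathscr A$-algebra $LFn$; this assembles into a canonical natural epimorphism $q : F\hat\Sigma U \twoheadrightarrow L$ between sifted-colimit-preserving functors. The axioms $\Gamma_n$ are then taken to be (a generating set of) the kernel relation of the component $q_{Fn} : F\hat\Sigma UFn \twoheadrightarrow LFn$. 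The delicate step is to verify that this data, assembled pointwise on $\mathscr A_{\ff}$, induces the joint coequalizer~\eqref{eq:presentation1} at every $A \in \mathscr A$: here one writes $A$ as a canonical sifted colimit of finitely generated free algebras and uses that both $L$ and $F\hat\Sigma U$ preserve this colimit, so that pointwise coequalization on $\mathscr A_{\ff}$ propagates to the required coequalizer globally.
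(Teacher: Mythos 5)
Your proposal is correct and follows essentially the argument the paper relies on: the paper gives no proof of its own here but defers to \cite[Theorem~4.7]{kurz-rosicky:strong}, and your sketch (reading off $\Sigma_n$ from generators of $LFn$, taking $\Gamma_n$ from the kernel of the induced epimorphism $q_{Fn}$, and propagating the coequalizer from $\mathscr A_\ff$ to all of $\mathscr A$ via preservation of sifted colimits) is precisely that standard proof. The only point worth making explicit is that the joint coequalizer defining $L_0A$ must be rewritten as a pointwise coequalizer of the form $F\hat\Gamma U \rightrightarrows F\hat\Sigma U$ (as in the Proposition preceding the theorem) so that $L_0$ itself visibly preserves sifted colimits before comparing it with $L$.
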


\noindent This theorem gives a bijection between endofunctors 
$L$ of a variety $\mathscr A$ that preserve sifted colimits 
and logics \emph{extending} $\mathscr A$  
by `modal operators' and axioms of rank $1$. The theorem enables 
us to investigate such logics using purely category theoretic means.

%========================================================%

\subsection{Equational presentations of locally monotone functors} 
\label{sec:presentations2}

For the purposes of our investigations, we are interested in modal logics extending $\DL$, given by rank $1$ axioms of \emph{monotone} operations. 
While $U:\DL\to\Set$ is certainly finitary and monadic (since $\DL$ is an ordinary variety of algebras), it is also the case that the natural forgetful functor $U':\DL\to\Pos$, mapping a distributive lattice to its carrier equipped with the lattice order, exhibits $\DL$ as an {\em ordered\/} variety. 

\medskip\noindent For now, let us be slightly more general and consider an ordered variety
\[
F'\dashv U':\mathscr A\to \Pos
\]
By an \emph{ordered signature} $\Sigma'$ we shall mean 
a family of \emph{posets} $\Sigma'=(\Sigma'_n)_{n<\omega}$. 
Denote by $\tilde\Sigma':\Pos\to\Pos$ the corresponding {\em polynomial\/} 
functor 
$$
X\mapsto \coprod_{n<\omega}\Pos(Dn,X)\bullet \Sigma'_n. 
$$
Recall that $\bullet$ denotes the copower, which in the case above is just the cartesian tensor product in $\Pos$. 

\medskip \noindent In the following, we shall call a functor $\Pos\to\Pos$ 
\emph{polynomial} only if it is of the form $\tilde\Sigma'$, for 
some ordered signature $\Sigma'$. 
Notice that a polynomial functor only employs 
\emph{discrete arities}, and that if $\Sigma'=D\Sigma$ 
for some (necessarily unique!) $\Set$-signature $\Sigma$, 
then $\tilde\Sigma'$ is the posetification of $\hat\Sigma$ 
in the sense of Definition~\ref{def:extension}. 

\begin{definition}\label{def:presentation2}
A functor $L'$ on an ordered variety $\mathscr A$ 
has an \emph{ordered presentation in discrete arities}, or, 
shortly, an \emph{ordered presentation}, if there are ordered 
signatures $\Sigma'$ and $\Gamma'$, with $\Gamma' _n\subseteq U'F'\tilde \Sigma' U' F' D n\times U'F'\tilde\Sigma' U' F' D n$ for each natural number $n$, such that for all $A$ in $\mathscr A$, the following diagram, where $n$ 
ranges again over natural numbers and $v$ ranges over all 
valuations $F'Dn\to A$ (of $n$-variables in $A$)
\begin{equation}\label{eq:presentation2}
\xymatrix@C=40pt{
F'\tilde\Gamma' Dn \ar@<.5ex>[r] \ar@<-.5ex>[r] & F'\tilde\Sigma' U' F'Dn \ar[r]^{\ \ F'\tilde\Sigma' U' v} & F'\tilde\Sigma' U'A \ar[r]^{q}
 & L'A
}
\end{equation}
is a joint coequalizer.
\end{definition}

\noindent In the definition above it is not important to 
allow $\Gamma'_n$ to be posets. 
On other hand, for a general variety 
$\mathscr A$ it is important to allow the $\Sigma'_n$ to be posets 
(which is the reason why we can use a coequalizer in \eqref{eq:presentation2} instead of a coinserter). 
Then again, for $\mathscr A=\DL$ we can take the $\Sigma'_n$ 
discrete since for $\DL$ the order is equationally definable, 
as will be discussed in detail in Section~\ref{pres_DL}.

\begin{remark}\label{rem:presentations2}
An ordered presentation is monotone. In detail, let 
$\alpha:L'A\to A$ be an algebra. Consider an operation
$\sigma\in\Sigma'_n$ and $a,a':Dn\to U'A$ with $a\le a'$, 
that is, $a_i\le a'_i$ for all $1\le i \le n$. We have to show 
that $\sigma(a)\le\sigma(a')$ holds in the $L'$-algebra $(A,\alpha)$. But this is equivalent to the obvious inequality $U'\alpha\circ q^\flat(\sigma,a)\le U'\alpha\circ q^\flat(\sigma,a')$ where $q^\flat:\tilde\Sigma' U'A\to U'L'A$ is the adjoint transpose of the quotient map $q:F'\tilde \Sigma'U'A \to L'A$ and $(\sigma,a)$ and $(\sigma,a')$ are pairs in $\Sigma'_n\bullet\Pos(Dn,U'A)=\Sigma'_n\times\Pos(Dn,U'A)$.
\end{remark}

\begin{proposition}\label{prop:presentation2}
A locally monotone functor $L'$ on an ordered variety 
$\mathscr A$ has an ordered presentation if and only if there are polynomial functors $\tilde\Sigma, \tilde\Gamma:\Pos\to\Pos$ such that $L$ is a coequalizer
\begin{equation*}
\xymatrix{
F'\tilde\Gamma U' \ar@<.5ex>[r] \ar@<-.5ex>[r] & F'\tilde\Sigma U'  \ar[r] & L'
}
\end{equation*}
in the category of locally monotone endofunctors.
\end{proposition}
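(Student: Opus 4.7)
The plan is to argue that an ordered presentation in discrete arities and a coequalizer of natural transformations between composites $F'\tilde{\Sigma}U'$ and $F'\tilde{\Gamma}U'$ are two equivalent packagings of the same data, following the strategy of the non-enriched analogue in~\cite{kurz-rosicky:strong} but relying on the fact that $\Pos$-enriched colimits in the functor category $[\mathscr{A},\mathscr{A}]$ are computed pointwise.

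For the $(\Rightarrow)$ direction, I would start with an ordered presentation $(\Sigma',\Gamma')$ and form the polynomial functors $\tilde{\Sigma}',\tilde{\Gamma}':\Pos\to\Pos$. The inclusions $\Gamma'_n \subseteq U'F'\tilde{\Sigma}'U'F'Dn \times U'F'\tilde{\Sigma}'U'F'Dn$ give, by projection, two parallel monotone maps $\Gamma'_n \rightrightarrows U'F'\tilde{\Sigma}'U'F'Dn$. Using the adjunction $F'\dashv U'$ together with the isomorphism $\Pos(Dn,U'A)\cong \mathscr{A}(F'Dn,A)$, and summing over $n$ via the coproduct structure of $\tilde{\Gamma}'$, these assemble into two parallel natural transformations
\[
\varphi,\psi\colon F'\tilde{\Gamma}'U' \rightrightarrows F'\tilde{\Sigma}'U'
\]
of locally monotone endofunctors on $\mathscr{A}$. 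The joint coequalizer property of~\eqref{eq:presentation2} at each $A$ says exactly that $L'A$ is the coequalizer of $\varphi_A,\psi_A$ in $\mathscr{A}$; since conical coequalizers in a $\Pos$-functor category are formed objectwise, $L'$ is the coequalizer of $\varphi,\psi$ in the category of locally monotone endofunctors.

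For the $(\Leftarrow)$ direction, I would read off signatures $\Sigma'$ and $\Gamma'$ from the given polynomial functors $\tilde{\Sigma}$ and $\tilde{\Gamma}$, and evaluate the coequalizer pointwise at each $A$. Because $F'$ preserves colimits and
\[
\tilde{\Gamma}'U'A \;=\; \coprod_{n}\Pos(Dn,U'A)\bullet \Gamma'_n \;\cong\; \coprod_{n}\coprod_{v:F'Dn\to A}\Gamma'_n,
\]
the single pointwise coequalizer $F'\tilde{\Gamma}'U'A \rightrightarrows F'\tilde{\Sigma}'U'A \to L'A$ reorganises as a joint coequalizer indexed by all pairs $(n,v)$, which is precisely the diagram~\eqref{eq:presentation2}. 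The inclusion $\Gamma'_n \subseteq U'F'\tilde{\Sigma}'U'F'Dn \times U'F'\tilde{\Sigma}'U'F'Dn$ is then recovered as the two adjoint transposes of the components of the parallel pair at the free algebras $F'Dn$, while monotonicity of the operations follows from Remark~\ref{rem:presentations2}.

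The main obstacle I anticipate is bookkeeping: carefully matching the copower decomposition of $\tilde{\Gamma}'U'A$ with the indexing by valuations $v:F'Dn\to A$ in the joint coequalizer~\eqref{eq:presentation2}, while handling the fact that $\Gamma'_n$ is allowed to be an arbitrary poset (not merely a discrete set), and checking that the resulting natural transformations are genuinely $\Pos$-natural. Local monotonicity of $L'$, on the other hand, comes essentially for free: as the pointwise coequalizer of a parallel pair of natural transformations between locally monotone functors, $L'$ inherits its $\Pos$-enrichment from $F'\tilde{\Sigma}'U'$ via the universal property of the conical coequalizer in a $\Pos$-enriched setting.
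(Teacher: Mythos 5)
Your argument is correct in outline, but it is worth noting that the paper does not actually carry out this verification: its entire proof is a citation of \cite[Theorem~3.18]{kurz+velebil:presentations}, where the statement is obtained as an instance of a more general result on functors of descent type (the same machinery that drives the proof of Theorem~\ref{thm:eq-pres}). What you propose instead is the direct, hands-on translation between the two packagings of the data, i.e.\ the $\Pos$-enrichment of the argument of \cite[Theorem~4.7]{kurz-rosicky:strong}: transpose the inclusions $\Gamma'_n\subseteq U'F'\tilde\Sigma'U'F'Dn\times U'F'\tilde\Sigma'U'F'Dn$ into a parallel pair $F'\tilde\Gamma'U'\rightrightarrows F'\tilde\Sigma'U'$, and use that conical colimits in $[\mathscr A,\mathscr A]$ are pointwise and that $F'$ preserves colimits. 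This goes through; the two points you should make explicit are (i) that the joint coequalizer of Definition~\ref{def:presentation2}, indexed by the \emph{set} of pairs $(n,v)$, agrees with the single coequalizer of the assembled pair --- which holds because the canonical map $\coprod_{n,v}F'\tilde\Gamma'Dn\to F'\tilde\Gamma'U'A$ induced by the transposes $\tilde\Gamma'(\bar v)$ is an epimorphism (so a map coequalizes the assembled pair iff it coequalizes every member of the family), and in particular the order carried by $\Pos(Dn,U'A)$ in the copower $\tilde\Gamma'U'A$ is harmless for a \emph{conical} coequalizer, exactly as the paper's remark after Definition~\ref{def:presentation2} anticipates; and (ii) that $\Pos$-naturality of the assembled pair is automatic, since a natural transformation between locally monotone functors is already $\Pos$-natural (Section~\ref{sec:pos}). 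Your closing remark about local monotonicity of $L'$ is not needed in either direction (in both, $L'$ is given as a locally monotone functor), but it is not wrong. The trade-off between the two routes is the usual one: the citation buys brevity and generality, your direct argument buys an explicit dictionary between inequational presentations and coequalizer data that is genuinely useful when one wants to read off concrete modal axioms.
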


\begin{proof} See \cite[Theorem~3.18]{kurz+velebil:presentations} where the more general situation was considered.
\end{proof}

\begin{theorem}\label{thm:eq-pres}
Let  $L':\mathscr A\to\mathscr A$ be a locally monotone functor 
on an ordered variety. Then $L'$ preserves $\Pos$-sifted colimits 
if and only if it has an ordered presentation in discrete arities. 
\end{theorem}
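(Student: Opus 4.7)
The plan is to prove the two implications separately, building on Proposition~\ref{prop:presentation2} in one direction and on the density of $\mathbf J':\DL_\ff\to\DL$ (Lemma~\ref{dense lemma}), generalised to an arbitrary finitary ordered variety $\mathscr A$, in the other.

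For the implication ``ordered presentation $\Rightarrow$ preserves $\Pos$-sifted colimits'', I would apply Proposition~\ref{prop:presentation2} to write $L'$ as the coequalizer in the category of locally monotone endofunctors
\[
\xymatrix{F'\tilde\Gamma U' \ar@<.5ex>[r] \ar@<-.5ex>[r] & F'\tilde\Sigma U' \ar[r] & L'}
\]
and then check that each of the outer functors preserves enriched sifted colimits. The functor $U':\mathscr A\to\Pos$ preserves sifted colimits because $\mathscr A$ is an ordered variety, hence monadic over $\Pos$ for a strongly finitary monad; $F'$ preserves them as a left adjoint; and a polynomial functor $\tilde\Sigma'=\coprod_n \Pos(Dn,-)\bullet\Sigma'_n$ with \emph{discrete} arities is built from the representables $\Pos(Dn,-)\cong (-)^n$, which preserve sifted colimits by the very definition of siftedness (commutation with finite products), together with copowers and coproducts which are themselves colimits. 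Finally, coequalizers in the functor category are computed pointwise, and colimits commute with colimits, so the coequalizer $L'$ of sifted-colimit-preserving functors again preserves sifted colimits.

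For the converse, suppose $L'$ preserves $\Pos$-sifted colimits. The enriched generalisation of Corollary~\ref{cor:when L is lan} (the version of Lemma~\ref{dense lemma} valid for an arbitrary finitary ordered variety, from~\cite{kurz+velebil:varieties}) gives $L'\cong \Lan_{\mathbf J'}(L'\mathbf J')$, so $L'$ is determined by its values on finitely discretely generated free algebras $F'Dn$. I would then construct a presentation explicitly by setting
\[
\Sigma'_n = U'L'F'Dn
\]
(a poset, inheriting the order from $U'L'F'Dn$) and defining the comparison map $q_A:F'\tilde\Sigma' U'A\to L'A$ as the transpose of the natural transformation sending a pair $(w:Dn\to U'A,\sigma\in U'L'F'Dn)$ to $U'L'v(\sigma)\in U'L'A$, where $v:F'Dn\to A$ is the adjoint transpose of $w$. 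This is exactly the coend formula for $\Lan_{\mathbf J'}(L'\mathbf J')$, so the $q_A$ assemble into a natural transformation whose components are surjective: since $A$ is a canonical sifted colimit of $F'Dn$'s and both $U'$ and $L'$ preserve sifted colimits, every element of $U'L'A$ is of the form $U'L'v(\sigma)$ for some $v:F'Dn\to A$ and $\sigma\in\Sigma'_n$. Regularity of $q_A$ in $\mathscr A$ then follows because a map $F'X\to A$ is a regular epi as soon as its adjoint transpose is surjective. For the equations, take
\[
\Gamma'_n \;\subseteq\; U'F'\tilde\Sigma' U'F'Dn \times U'F'\tilde\Sigma' U'F'Dn
\]
to be (the underlying relation of) the kernel pair of $q_{F'Dn}$, which coequalises to $L'F'Dn$ in $\mathscr A$.

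The main obstacle is verifying the joint-coequalizer condition of Definition~\ref{def:presentation2} at a \emph{general} algebra $A$, and this is where sifted-colimit preservation is essential. The strategy is to reduce to the free case just handled: write $A$ as the canonical sifted colimit $\mathscr A(\mathbf J'-,A)*\mathbf J'$, apply $L'$, $F'\tilde\Sigma' U'$ and $F'\tilde\Gamma' U'$ (each of which preserves sifted colimits by the first half of the argument) to obtain the joint coequalizer at $A$ as a sifted colimit of the joint coequalizers at the $F'Dn$'s, and then invoke the fact that sifted colimits commute with coequalizers. This interchange is the delicate point; it works because coequalizers are absolute with respect to sifted colimits in the functor category (colimits commuting with colimits), and because the substitution maps $\tilde\Gamma'Dn\to \tilde\Sigma'U'F'Dn$ are functorial in $n$, so the two parallel families assemble into parallel natural transformations whose pointwise coequalizer is $L'$.
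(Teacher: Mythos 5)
Your proposal is correct in substance and is essentially a hands-on unfolding of the paper's argument: the paper invokes the descent-type machinery of~\cite{kurz+velebil:presentations} for the composite $[\mathscr A_\ff,\mathscr A]\to[\Set_f,\Pos]\to[|\Set_f|,\Pos]$, whose counit is precisely your canonical map $q\colon F'\tilde\Sigma'U'\to L'$ with $\Sigma'_n=U'L'F'Dn$, and whose ``coequalizer of a reflexive pair of frees'' conclusion is your kernel-pair presentation. Your explicit forward direction (representables $\Pos(Dn,-)\cong(-)^n$ commute with sifted weights by definition of siftedness, $F'$ and $U'$ preserve sifted colimits, pointwise coequalizers commute with colimits) is fine and is left implicit in the paper. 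One remark on efficiency: the delicate interchange you describe at the end --- verifying the joint coequalizer of Definition~\ref{def:presentation2} at a \emph{general} $A$ by commuting it past the canonical sifted colimit --- is exactly what Proposition~\ref{prop:presentation2} is designed to absorb; if you exhibit $q$ and its kernel pair as a (reflexive) coequalizer in the category of locally monotone endofunctors, that proposition hands you the pointwise joint-coequalizer form for free, which is how the paper avoids this step.

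The one genuinely wrong supporting claim is that ``a map $F'X\to A$ is a regular epi as soon as its adjoint transpose is surjective.'' In an ordered variety this fails: already in $\Pos$ itself the surjection $D2\to\Two$ is not a coequalizer of any pair, only a coinserter, so in general a surjective homomorphism is \emph{not} the coequalizer of its kernel pair --- the kernel pair remembers which elements are identified but not which order relations are imposed. Your construction is nevertheless saved, for a reason you should make explicit: because you take $\Sigma'_n=U'L'F'Dn$ \emph{with its order}, the summand of $\tilde\Sigma'U'F'Dn$ indexed by the unit valuation $\eta_{Dn}$ already maps isomorphically (as a poset) onto the generators of $U'L'F'Dn$, so the order on $L'F'Dn$ lies in the image of the order on the free algebra and coequalizing the kernel pair does recover $q_{F'Dn}$. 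This is precisely the point the paper flags after Definition~\ref{def:presentation2}: allowing the $\Sigma'_n$ to be posets is what permits a coequalizer rather than a coinserter. As written, your regular-epi step needs this repair; with it, the argument goes through.
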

\begin{proof} 
The proof is essentially contained in~\cite{kurz+velebil:presentations}, but for the reader's convenience, we shall spell out some of the details.

\smallskip\noindent We denote by $\mathscr A_\ff$ the full subcategory of $\mathscr A$ spanned by the algebras which are free on finite discrete posets and by $F'_f:\Set_f\to \mathscr A_\ff$ the corresponding domain-codomain restriction of the composite $F' D:\Set\to \Pos\to\mathscr A$. Then we follow the next steps: 
\begin{enumerate}
\item 
Observe first that 
$[F'_f,U']:[\mathscr A_\ff,\mathscr A]\to [\Set_f,\Pos]$,
sending a functor $L':\mathscr A_\ff\to\mathscr A$ to the 
composite $U' L' F'_f$, is of descent type. That is, $[F'_f,U']$ is right adjoint and each component of the counit of the corresponding adjunction is a coequalizer~\cite[Lemma~3.14]{kurz+velebil:presentations}. 
\item
The functor $[E,-]:[\Set_f,\Pos]\to [|\Set_f|,\Pos]$, where 
$|\Set_f|$ is the discrete category of finite sets 
and $E:|\Set_f|\to\Set_f$ is the inclusion, is monadic 
(again, this follows from
Lemma~3.14 of~\cite{kurz+velebil:presentations}).
\item
Consequently, the composite
$$
\xymatrixcolsep{3pc}
\xymatrix{
[\mathscr A_\ff,\mathscr A]
\ar[0,1]^-{[F'_f,U']}
&
[\Set_f,\Pos]
\ar[0,1]^-{[E,-]}
&
[| \Set_f |,\Pos]
}
$$
is of descent type~\cite[Theorem~3.18]{kurz+velebil:presentations}.
\end{enumerate}
\noindent Therefore, every locally monotone functor 
$L':\mathscr A_\ff\to\mathscr A$ (i.e., every $L'$ preserving 
sifted colimits) 
arises as a coequalizer as 
in Proposition~\ref{prop:presentation2}, that is, it admits an ordered presentation in discrete arities. 
\end{proof}

%========================================================%

\subsection{Ordinary and ordered presentations of functors on $\BA$}\label{discreteBA}

Let $\mathscr A$ be a $\Pos$-enriched category with 
discretely ordered hom-posets, such as $\BA$. Then, as we 
are going to show now, there is no essential difference in 
the presentations according to Sections~\ref{sec:presentations2} 
and~\ref{sec:presentations}.

\medskip\noindent Before coming to functors on varieties, let us clarify when ordinary varieties are ordered varieties with discrete hom-sets and vice versa. Recall that we wrote $C\dashv D:\Set\to\Pos$ for the adjunction in which $D$ is the discrete functor and $C$ the connected components functor.

\begin{proposition}
Let $F'\dashv U':\mathscr A\to\Pos$ be an ordered variety. It has discretely ordered hom-posets if and only if any of the following equivalent conditions are satisfied.
\begin{enumerate}
\item $U'$ factors through $D:\Set\to\Pos$.
\item $U'\cong DCU'$.
\item $\eta_{U'}: U'\to DCU'$ is an isomorphism, where $\eta$ is the unit of the adjunction $C\dashv D$.
\end{enumerate}
If any of the above conditions is satisfied then also $F'$ factors through $C:\Pos\to\Set$ via $F'\cong F'DC$. Moreover, $F'D\dashv CU':\mathscr A\to\Set$ is monadic.
\end{proposition}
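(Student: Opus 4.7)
\noindent The plan is to first handle the equivalences $(1)\Leftrightarrow(2)\Leftrightarrow(3)$ using that $D:\Set\to\Pos$ is fully faithful, equivalently that the counit $\varepsilon:CD\to\Id_{\Set}$ of $C\dashv D$ is an isomorphism. The implications $(3)\Rightarrow(2)\Rightarrow(1)$ are immediate. For $(1)\Rightarrow(3)$, supposing $U'\cong DG$ for some $G:\mathscr A\to\Set$, naturality of $\eta$ reduces the claim that $\eta_{U'A}$ is invertible to the statement that each $\eta_{DGA}:DGA\to DCDGA$ is invertible; and this in turn is forced by the triangle identity $D\varepsilon_{GA}\circ\eta_{DGA}=\id_{DGA}$ together with $\varepsilon$ being an iso. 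To connect this with the discreteness of hom-posets, I would use the $\Pos$-adjunction $F'\dashv U'$ to get $U'A\cong\mathscr A(F'1,A)$ as posets, so if every hom-poset is discrete then so is each $U'A$, giving (1) by setting $GA$ to be the underlying set of $U'A$. Conversely, $U'$ is faithful (as a monadic right adjoint), so each $\mathscr A(A,B)$ embeds into $\Pos(U'A,U'B)=\Pos(DGA,DGB)$, which is discrete.

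\medskip

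\noindent For the factorisation $F'\cong F'DC$, I would use that any monotone map from a poset $X$ into a discrete poset such as $U'A=DGA$ is constant on the connected components of $X$. This produces a chain of natural bijections
\[
\mathscr A(F'X,A)\cong\Pos(X,U'A)\cong\Set(CX,GA)\cong\Pos(DCX,U'A)\cong\mathscr A(F'DCX,A),
\]
so Yoneda delivers $F'X\cong F'DCX$; unfolding the iso shows it is $F'\eta_X$, where $\eta_X:X\to DCX$ is the unit of $C\dashv D$.

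\medskip

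\noindent Finally, to see that $F'D\dashv CU'$ is monadic I would invoke Beck's monadicity theorem, applied to $G:=CU'$. First, $G$ is conservative: if $Gf$ is a bijection then $U'f=DGf$ is an isomorphism between discrete posets, hence $f$ is invertible because $U'$ is monadic. Second, $\mathscr A$ has all coequalizers, as it is a variety. Third, $G$ preserves coequalizers of $G$-split pairs: since every $U'A$ is discrete, a pair in $\mathscr A$ is $G$-split if and only if it is $U'$-split, so monadicity of $U'$ makes $U'$ preserve any such coequalizer in $\Pos$, and then $C:\Pos\to\Set$, being a left adjoint, preserves it further down to $\Set$. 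The main obstacle is this last bookkeeping step, specifically the identification of $G$-split and $U'$-split pairs; it ultimately turns on the observation that monotonicity between discrete posets is automatic, so any set-theoretic splitting lifts to a $\Pos$-splitting.
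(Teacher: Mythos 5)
The paper states this proposition without proof, so there is nothing to compare against; judged on its own, your argument is correct and complete in all essentials. The cycle $(3)\Rightarrow(2)\Rightarrow(1)\Rightarrow(3)$ via the triangle identity $D\varepsilon_{GA}\circ\eta_{DGA}=\id$ and invertibility of $\varepsilon$ is right, as is the link to discreteness of hom-posets: $U'A\cong\mathscr A(F'1,A)$ gives one direction, and local monotonicity plus faithfulness of $U'$ into the discrete poset $\Pos(U'A,U'B)$ gives the other. The Yoneda computation identifying $F'\eta_X$ as an isomorphism is the standard way to get $F'\cong F'DC$. Two small points you should make explicit. First, you apply Beck's theorem to $CU'$ without checking that $F'D\dashv CU'$ is in fact an adjunction; this is a one-line chain $\mathscr A(F'DS,A)\cong\Pos(DS,U'A)\cong\Pos(DS,DCU'A)\cong\Set(S,CU'A)$ using full faithfulness of $D$, but it is part of what the proposition asserts and should not be left implicit. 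Second, in the split-pair step you invoke that the ordinary monadic functor underlying the $\Pos$-monadic $U'$ creates coequalizers of $U'$-split pairs; this is true (the algebra structure on a split coequalizer is built from the splitting alone, which is absolute), but since the paper's notion of ordered variety is enriched-monadic, a sentence acknowledging the passage from enriched to ordinary monadicity would tighten the argument. Neither point is a genuine gap.
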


\noindent Conversely, if $F\dashv U:\mathscr A\to\Set$ is an ordinary variety and the only order on algebras in $\mathscr A$ making all operations monotone is the trivial discrete order (as it is the case in $\BA$), then 
$FC\dashv DU:\mathscr A\to\Pos$ is an ordered variety, see~\cite{kurz+velebil:varieties}.

\begin{proposition}
Let $\mathscr A\to\Pos$ be a variety with discretely ordered 
homsets and let $L:\mathscr A\to \mathscr A$ be a 
(necessarily locally monotone) functor. Then the functor 
$L$ preserves ordinary sifted colimits if and only if $L$ preserves $\Pos$-enriched sifted colimits. Moreover, $\langle D\Sigma,D\Gamma\rangle $ 
is an ordered presentation of $L$ if and only if $\langle \Sigma,\Gamma\rangle$ 
is a presentation of $L$ and $\langle \Sigma',\Gamma'\rangle $ is 
an ordered presentation of $L$ if and only if $\langle C\Sigma',C\Gamma'\rangle$ 
is a presentation of $L$.
\end{proposition}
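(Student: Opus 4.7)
The first assertion can be argued directly from the characterisation of sifted colimits on an ordered variety as those generated by filtered colimits together with reflexive coinserters (respectively, reflexive coequalizers, in the unenriched case), recalled in Section~\ref{sec:sifted}. Because $\mathscr A$ has discrete hom-posets, every inequality between parallel arrows in $\mathscr A$ is an equality, and the $2$-dimensional aspect of any universal property becomes vacuous. Hence reflexive coinserters in $\mathscr A$ coincide with reflexive coequalizers and, together with the identity of filtered colimits in the enriched and ordinary settings, this makes the classes of functors preserving $\Pos$-enriched and ordinary sifted colimits coincide. Alternatively, this first assertion follows \emph{a posteriori} from the second and third via Theorems~\ref{th:5.4}(2) and~\ref{thm:eq-pres}.

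For the second claim, I would apply the identities $U'\cong DU$ and $F = F'D$ furnished by the preceding proposition, together with the fact noted just after Definition~\ref{def:presentation2} that $\tilde{D\Sigma}$ is the posetification of $\hat\Sigma$, so that $\tilde{D\Sigma}\circ D \cong D\hat\Sigma$. Since $U'A\cong DUA$ is discrete for every $A\in\mathscr A$, substituting these identities into diagram~\eqref{eq:presentation2} for $\langle D\Sigma,D\Gamma\rangle$ reduces each term literally to the corresponding term of diagram~\eqref{eq:presentation1} for $\langle\Sigma,\Gamma\rangle$; for example $F'\tilde{D\Sigma}U'A \cong F'D\hat\Sigma UA = F\hat\Sigma UA$, and the same reductions apply to the parallel arrows and to the valuations (which are literally the same since $F'Dn = Fn$). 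The two joint coequalizer conditions therefore coincide.

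The third claim is the computationally substantive one and additionally uses the factorisation $F'\cong FC$ guaranteed when the hom-posets of $\mathscr A$ are discrete. The key calculation is that, since $U'A\cong DUA$, the $n$-th summand of $\tilde\Sigma' U'A$ is $\Pos(Dn,DUA)\times\Sigma'_n \cong \coprod_{(UA)^n}\Sigma'_n$, a coproduct of copies of $\Sigma'_n$ indexed by elements of $(UA)^n$. Preservation of coproducts by $C$ then gives $C\tilde\Sigma' U'A \cong \hat{C\Sigma'}UA$, whence $F'\tilde\Sigma' U'A \cong FC\tilde\Sigma' U'A \cong F\hat{C\Sigma'}UA$; analogously $F'\tilde\Gamma' Dn \cong F\hat{C\Gamma'}n$ (using $CDn = n$). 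Under these natural isomorphisms diagram~\eqref{eq:presentation2} for $\langle\Sigma',\Gamma'\rangle$ is transformed into diagram~\eqref{eq:presentation1} for $\langle C\Sigma',C\Gamma'\rangle$, so the two joint coequalizer conditions match.

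The main obstacle is conceptual rather than technical: one must justify that collapsing the poset $\Sigma'_n$ to the set $C\Sigma'_n$ loses no information about the operations in the signature. This is ensured by the discreteness of the algebras in $\mathscr A$ themselves: any inequality $\sigma_1\le\sigma_2$ in $\Sigma'_n$ forces the induced operations $\sigma_1^A,\sigma_2^A$ on $A$ to be equal (since $A^n$ is discrete), so passing to connected components identifies precisely those pairs of operation symbols that must act identically on every $A\in\mathscr A$.
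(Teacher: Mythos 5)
The paper states this proposition without proof, so there is nothing to compare against directly; your argument supplies the missing details and is, as far as I can tell, correct and surely the intended one. The three ingredients you use are exactly the right ones: (i) in a discretely enriched category the $2$-dimensional aspect of any conical universal property is automatic, so reflexive coinserters are reflexive coequalizers and enriched filtered colimits are ordinary ones, whence the two preservation conditions agree by the characterisations recalled in Section~\ref{sec:sifted}; (ii) $U'\cong DU$, $F=F'D$ and $\widetilde{D\Sigma}\,D\cong D\hat\Sigma$ identify diagram~\eqref{eq:presentation2} for $\langle D\Sigma,D\Gamma\rangle$ with diagram~\eqref{eq:presentation1} for $\langle\Sigma,\Gamma\rangle$; (iii) $F'\cong FC$ and the computation $C(\Pos(Dn,DUA)\times\Sigma'_n)\cong \Set(n,UA)\times C\Sigma'_n$ (since $C$, being a left adjoint, preserves the coproduct $\coprod_{(UA)^n}\Sigma'_n$) identify the two diagrams in the third claim. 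Two small points worth making explicit if you write this up: first, the coequalizers in Definitions~\ref{def:presentation} and~\ref{def:presentation2} are taken in $\mathscr A_o$ and $\mathscr A$ respectively, and these agree precisely because of the discreteness of hom-posets (same remark as in (i)); second, since $U'$ takes values in discrete posets, the sets $\Gamma'_n\subseteq U'F'\tilde\Sigma' U'F'Dn\times U'F'\tilde\Sigma' U'F'Dn$ are automatically discrete, so $C\Gamma'_n=\Gamma'_n$ and no information is collapsed on the equations side. Your closing remark about why collapsing $\Sigma'_n$ to $C\Sigma'_n$ is harmless is a useful semantic gloss, but note that the formal content is already carried by the natural isomorphism $F'\tilde\Sigma' U'\cong F\widehat{C\Sigma'}U$: the identification of comparable operation symbols happens inside $F'\cong FC$, not as a separate verification.
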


\noindent The proposition above guarantees that for a functor $L:\BA\to\BA$, it does not matter whether we consider it as an ordinary functor on the variety $\BA$, or whether we consider it as a locally monotone functor on the ordered variety $\BA$.

%========================================================%

\subsection{Ordinary and ordered presentations of functors on $\DL$}\label{pres_DL}

The aim of this section is to show that not only does an endofunctor 
of $\DL$ have a presentation by operations and equations 
if and only if it preserves ordinary sifted colimits, but also that 
a functor has a presentation by \emph{monotone} operations 
and equations if and only if it preserves \emph{enriched} sifted colimits.

\medskip\noindent We begin with the following:

\begin{proposition}
If $\mathscr A$ is an ordered variety and $L':\mathscr A\to\mathscr A$ 
is a locally monotone functor which
preserves enriched sifted colimits, then 
the underlying ordinary functor
$L'_o:\mathscr A_o\to\mathscr A_o$
preserves ordinary sifted colimits.
\end{proposition}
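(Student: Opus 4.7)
The plan is to realise each ordinary sifted colimit in $\mathscr A_o$ as a $\Pos$-enriched weighted colimit in $\mathscr A$ with a $\Pos$-sifted weight, so that preservation of the latter by $L'$ will transfer automatically to preservation of the former by $L'_o$.

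First, I would fix an ordinary small sifted category $\mathcal D$ together with a diagram $D:\mathcal D\to\mathscr A_o$, and regard $\mathcal D$ as a locally discrete $\Pos$-enriched category, so that $D$ lifts canonically to a $\Pos$-enriched functor $D:\mathcal D\to\mathscr A$. Taking $\Delta 1:\mathcal D^\op\to\Pos$ to be the constant weight at the terminal poset, I would observe that the $\Pos$-weighted colimit $\Delta 1 * D$ computed in $\mathscr A$ coincides with the conical colimit of $D$, which in turn agrees with the ordinary colimit $\colim D$ in $\mathscr A_o$. The heart of the argument is then to verify that $\Delta 1$ is a $\Pos$-sifted weight, i.e.\ that the functor $\Delta 1 * {-}:[\mathcal D,\Pos]\to\Pos$ preserves finite products. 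Unpacking, this asks that $\colim(G_1\times\cdots\times G_k)\cong \colim G_1\times\cdots\times \colim G_k$ holds in $\Pos$ for every $G_1,\ldots,G_k:\mathcal D\to\Pos$. My strategy here is to invoke the description of ordinary sifted colimits recalled in Section~\ref{sec:sifted}, namely that they are generated by filtered colimits and reflexive coequalizers, and then to note that in $\Pos$ both of these are computed pointwise on underlying sets, while finite products in $\Pos$ are likewise pointwise; the required commutation thus reduces to the defining sifted property of $\mathcal D$ in $\Set$.

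Granted this, the conclusion is formal: since $L'$ preserves $\Pos$-sifted colimits and $\Delta 1$ is $\Pos$-sifted,
\[
L'_o(\colim D)\;=\;L'(\Delta 1 * D)\;\cong\;\Delta 1 *(L'\circ D)\;=\;\colim(L'_o\circ D),
\]
an isomorphism that already lives in $\mathscr A_o$, so $L'_o$ preserves the ordinary sifted colimit of $D$. The main obstacle I expect is the technical step of checking that constant weights over ordinary sifted shapes are $\Pos$-sifted in the enriched sense; once that is in place, the rest is a routine manipulation of enriched colimits.
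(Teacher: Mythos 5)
Your overall architecture is sound and genuinely different from the paper's. The paper argues via a density presentation: preservation of enriched sifted colimits forces preservation of conical filtered colimits and reflexive coequalizers, which are ordinary conical colimits, and then~\cite[Theorem~7.7]{ARV} upgrades this to preservation of all ordinary sifted colimits by $L'_o$. You instead propose to show that \emph{every} ordinary sifted colimit is already an enriched sifted colimit, by checking that the constant weight $\Delta 1$ on a locally discrete ordinary-sifted $\mathcal D$ is a $\Pos$-sifted weight. That claim is true, and if established it makes the conclusion immediate without invoking the generation result of~\cite{ARV}; it is arguably cleaner and more general (it works for any cotensored $\Pos$-category in place of an ordered variety).

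However, the argument you offer for the key lemma has a genuine gap. You justify the commutation $\colim(G_1\times G_2)\cong\colim G_1\times\colim G_2$ by saying that filtered colimits and reflexive coequalizers in $\Pos$ are ``computed pointwise on underlying sets.'' This is false for reflexive coequalizers: the forgetful functor $V:\Pos\to\Set$ does not preserve them, because after forming the $\Set$-quotient one must still quotient the induced preorder by antisymmetry, which can collapse further elements. For instance, take $Y=\Two+\Two=\{a\le b\}+\{c\le d\}$ and the reflexive pair encoding the identifications $b\sim c$, $d\sim a$: the coequalizer in $\Set$ has two elements, but in $\Pos$ the induced preorder satisfies $[a]\le[b]\le[a]$, so the coequalizer is a single point. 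Moreover, even granting pointwise computation, the reduction of an arbitrary sifted colimit to filtered colimits and reflexive coequalizers is a statement about \emph{preservation by functors} between cocomplete categories, not a decomposition from which commutation with finite products transfers formally. The correct route to your lemma is different: a small category $\mathcal D$ is sifted if and only if it is nonempty and the diagonal $\mathcal D\to\mathcal D\times\mathcal D$ is final (\cite[Theorem~2.15]{ARV}); finality yields $\colim_{\mathcal D}(G_1\times G_2)\cong\colim_{\mathcal D\times\mathcal D}(G_1\boxtimes G_2)$ in any cocomplete category, and cartesian closedness of $\Pos$ (so that $X\times{-}$ preserves colimits) gives $\colim_{\mathcal D\times\mathcal D}(G_1\boxtimes G_2)\cong\colim G_1\times\colim G_2$; connectedness of $\mathcal D$ handles the empty product. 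With that repair, together with the (easy) observation that ordinary colimits in $\mathscr A_o$ agree with enriched conical colimits because ordered varieties are cotensored, your proof goes through.
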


\begin{proof}
By~\cite[Theorem~6.9]{kurz+velebil:varieties}
we know that $\mathscr A$ is a free cocompletion by enriched 
sifted colimits of the full subcategory 
$\mathbf J':\mathscr A_{\ff}\hookrightarrow\mathscr A$ spanned by free algebras 
on finite and discrete sets of generators. 

\noindent Furthermore, $\mathbf J':\mathscr A_{\ff}\hookrightarrow\mathscr A$
has the density presentation consisting of the three classes below:
\begin{enumerate}
\item reflexive coinserters,
\item (conical) filtered colimits, i.e., by colimits weighted 
by $\varphi:\mathscr D^\op \to \Pos$ where $\mathscr D$
is ordinary filtered category and $\varphi$ 
is the constant functor at the one-element poset,
\item reflexive coequalizers.
\end{enumerate}
The reason is that we can
\begin{enumerate}
\item use coinserters of truncated nerves to create algebras,
 free on any finite poset,
\item use (conical) filtered colimits to obtain free algebras on any
 poset,
 \item use a reflexive coequalizer (that is, a canonical presentation) to obtain
 any algebra.
 \end{enumerate}
Hence, we know that $L':\mathscr A\to\mathscr A$ 
preserves enriched sifted colimits if and only if $L'$ preserves colimits in~(1),
(2) and~(3).
Since colimits in~(2) and~(3) are conical, they are preserved
by $L'_o$. But $\mathscr A$ is $\Pos$-cocomplete, being an ordered variety, hence $\mathscr A_o$ is $\Set$-cocomplete, \linebreak according to~\cite[\mbox{Section~3.8}]{kelly:enriched}.
And a functor between ordinary cocomplete categories preserves 
sifted colimits if and only if it preserves ordinary filtered colimits 
and reflexive coequalizers~\cite[Theorem~7.7]{ARV}.
\end{proof}

\medskip\noindent The above proposition, together with Theorem~\ref{th:5.4}, imply that in case 
$\mathscr A_o$ is an ordinary variety, the underlying ordinary functor $L'_{o}$ also has a presentation by operations and equations. 

\begin{example}
Let $L':\DL\to\DL$ be the locally monotone functor presented by one unary operation, written as $\Box$, and no equations. It follows from the proposition that monotonicity of $\Box$ is equationally definable. Explicitly, the induced equational presentation of $L'_o$ can be given by 
$$\Box a \wedge \Box (a\vee b) = \Box a$$
Of course, the proposition only tells us that all finitary equations valid for a monotone $\Box$ together present $L'_o$. But it is not difficult to check that the equation above is enough to force $\Box$ to be monotone.
\qed
\end{example}

\medskip\noindent Conversely, if $\mathscr A_o$ is an ordinary variety, it makes sense 
to ask how a presentation of a functor on $\mathscr A_o$ 
induces a presentation of a functor on $\mathscr A$. 
We are thinking of a situation such as the one of Example~\ref{exle:L'pcal}, where the functor $L'$ is defined equationally on the ordinary variety $\DL_o$, but can also be seen as a locally monotone functor on the ordered variety $\DL$.
Consider thus an~ordered variety $F'\dashv U':\mathscr A\to\Pos$. 
By slight abuse of notation, we let $\mathscr A_o$ 
to stand both for the underlying ordinary category of $\mathscr A$, 
and for the $\Pos$-subcategory which has the same objects and arrows 
as $\mathscr A$ but discrete homsets, and we put 
$\tilde D:\mathscr A_o\to \mathscr A$ to be the (necessarily
locally monotone) inclusion. 
Assume that $\mathscr A_o$ is an ordinary variety. 
Then to say that the following diagram 
$$
\xymatrix{
\mathscr A_o\ar[r]^{\tilde D} & \mathscr A \\
\Set \ar[r]_{D} \ar[u]^{F}&  \Pos \ar[u]_{F'}
}
$$
commutes defines $F$. 
Let $\langle\Sigma, \Gamma\rangle$ be a presentation
of a functor $L_o:\mathscr A_o\to\mathscr A_o$ by the coequalizer \eqref{eq:presentation1} of Definition~\ref{def:presentation}. 
It induces a presentation $\langle \Sigma' = D\Sigma, \Gamma' = D\Gamma\rangle$ 
of a functor \linebreak $L':\mathscr A\to \mathscr A$ by the coequalizer \eqref{eq:presentation2} of Definition~\ref{def:presentation2}. 
Since $\tilde D$ preserves the coequalizer \eqref{eq:presentation1}, we obtain for all $A\in\acal$
the dotted arrow in the diagram

\begin{equation}\label{eq:AoA}
\vcenter{
\xymatrix@C=40pt{
\tilde DF\hat\Gamma n \ar@<.5ex>[r] \ar@<-.5ex>[r] \ar[d]
& \tilde DF\hat\Sigma U Fn \ar[r]^{\tilde D F\hat\Sigma U v} \ar@{-->}[d]
& \tilde DF\hat\Sigma UA \ar[r] \ar@{-->}[d]
& \tilde DL_oA\ar@{..>}[d]
\\
F'\tilde\Gamma' Dn \ar@<.5ex>[r] \ar@<-.5ex>[r] 
& F'\tilde\Sigma' U' F'Dn \ar[r]^{\ \ \tilde D F'\tilde\Sigma' U' v} 
& F'\tilde\Sigma' U'A \ar[r]^{}
& L'A
}}
\end{equation}

\medskip\noindent
Due to $\tilde DF=F'D$ and $D\hat\Gamma=\tilde\Gamma' D$, the left-most vertical arrow is an isomorphism. Even though  the two vertical dashed arrows in the middle need not be order-reflecting, they are surjective, which implies that also the the dotted vertical arrow on the right is onto. But since the lower row may have more inequalities, the dotted arrow need not be bijective, see Example~\ref{exle:AoA} below.

\begin{definition}
We say that $\langle\Sigma, \Gamma\rangle$ is  a \emph{presentation by monotone operations and equations}, or, shortly, a \emph{monotone presentation},  if the dotted arrow~\eqref{eq:AoA} is an isomorphism for all $A\in\acal$. 
\end{definition}
\noindent This terminology is justified by Remark~\ref{rem:presentations2}, according to which $\langle D\Sigma, D\Gamma\rangle$ is a presentation by monotone operations.

\begin{example}\label{exle:AoA}
The presentation of Example~\ref{exle:L'pcal} is a presentation by monotone operations, since to say that $\Box$ preserves meets and that $\Diamond$ preserves joins forces $\Box$ and $\Diamond$ to be monotone. 
On the other hand, if these axioms had been omitted from the presentation, the resulting presentation would not have been monotone.
\end{example}

\medskip\noindent To summarize, given an ordered presentation  $\langle\Sigma', \Gamma'\rangle$ of a functor $L':\mathscr A\to\mathscr A$ on an ordered variety in the sense of Definition~\ref{def:presentation2}, there is a monotone presentation by operations and equation $\langle \Sigma,\Gamma\rangle$ of the underlying ordinary functor $L'_o$ if $\mathscr A_o\to\Pos_o\to\Set$ is a variety. This is due to the following result.

\begin{theorem}
Let $L$ be an endofunctor on a category $\mathscr A$ that is both 
an ordered and an ordinary variety. If $L$ has an ordered presentation,
then it has a presentation by monotone operations and equations. 
\end{theorem}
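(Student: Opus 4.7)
The strategy is to chain together the two colimit--preservation correspondences established in this section and then to verify the monotonicity condition.

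First, I will use Theorem~\ref{thm:eq-pres} to reformulate the hypothesis: $L$ having an ordered presentation is equivalent to $L$ preserving $\Pos$--enriched sifted colimits. The first proposition of this subsection (stating that enriched sifted colimit preservation for $L$ implies ordinary sifted colimit preservation for the underlying functor $L_o$) then gives that $L_o$ preserves ordinary sifted colimits. Applying Theorem~\ref{th:5.4}(2) to the ordinary variety $\mathscr A_o$ consequently produces an ordinary presentation $\langle\Sigma,\Gamma\rangle$ of $L_o$.

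Next, I will choose $\langle\Sigma,\Gamma\rangle$ in the canonical ``maximal'' fashion, namely $\Sigma_n := UL_oFn$ and $\Gamma_n$ the kernel of the quotient $F\hat\Sigma UFn\twoheadrightarrow L_oFn$, so that $\langle\Sigma,\Gamma\rangle$ presents $L_o$ by all $n$--ary operations and all valid rank--$1$ equations. With this choice, the task reduces to showing that the dotted arrow of~\eqref{eq:AoA} is an isomorphism; surjectivity was already noted in the paragraph following~\eqref{eq:AoA}, so only injectivity remains.

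For injectivity, the central observation will be that local monotonicity of $L$ makes every $\sigma\in\Sigma_n$ a monotone map $A^n\to LA$ in $\mathscr A$ for each $A$. Combined with the hypothesis that $\mathscr A$ is simultaneously an ordered and an ordinary variety --- which forces the partial order of every $\mathscr A$--algebra to be equationally definable inside $\mathscr A_o$ --- every inequality enforced in the $\Pos$--enriched coequalizer of the bottom row of~\eqref{eq:AoA} is already witnessed in equational form in the ordinary coequalizer of the top row. Consequently the two coequalizers exhibit the same object of $\mathscr A$, giving the desired isomorphism.

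The hard part will be this last step: making precise how equational definability of the order in $\mathscr A_o$ converts the additional identifications produced by the $\Pos$--coequalizer into equations already present in $\Gamma$. I plan to decompose the comparison via the characterisation of (enriched) sifted colimits in terms of filtered colimits together with reflexive coinserters (respectively reflexive coequalizers) from Section~\ref{sec:sifted}, and then use the explicit construction of coinserters from Remark~\ref{rem:construction_coinserter} to relate the two coequalizers stepwise on these elementary building blocks.
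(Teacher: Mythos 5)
Your main chain --- Theorem~\ref{thm:eq-pres} to convert the ordered presentation into preservation of enriched sifted colimits, the first proposition of this subsection to deduce that $L_o$ preserves ordinary sifted colimits, and Theorem~\ref{th:5.4}(2) to produce an ordinary presentation of $L_o$ --- is exactly the paper's proof; the paper stops there and treats the monotonicity of the resulting presentation as immediate. Your fourth step, verifying that the dotted arrow of~\eqref{eq:AoA} is an isomorphism, is a legitimate thing to want spelled out, but the route you sketch is both heavier than necessary and rests on an unsubstantiated claim: it is not clear that being simultaneously an ordered and an ordinary variety forces the order of every algebra to be \emph{equationally definable} in $\mathscr A_o$ (monadicity of $\mathscr A_o\to\Set$ only forces the order to be \emph{determined} by the algebraic structure, since a monadic functor reflects isomorphisms). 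Fortunately that claim is not needed: your own first observation in that step already suffices. Since the adjunction $F'\dashv U'$ is enriched, $\mathscr A(F'Dn,A)\cong (U'A)^n$ as posets, so local monotonicity of $L$ makes the interpretation map $\tilde\Sigma' U'A\to U'L A$, $(\sigma,a)\mapsto U'L(v_a)(\sigma)$, monotone in $a$; hence the ordinary quotient $F\hat\Sigma UA\to L_oA$ factors through the dashed surjection onto $F'\tilde\Sigma' U'A$, and the resulting map coequalizes the bottom pair of~\eqref{eq:AoA}, yielding $L'A\to L_oA$ inverse to the dotted arrow. No decomposition into filtered colimits and reflexive coinserters, and no appeal to equational definability of the order, is required.
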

\begin{proof}
To say that $\mathscr A$ is both an ordered and an ordinary variety 
is to say that $\mathscr A$ comes equipped with a forgetful functor $\mathscr A\to\Pos$ so that $\mathscr A\to\Pos$ is an ordered variety and $\mathscr A_o\to\Pos_o\to\Set$ is an ordinary variety. 
If $L$ has an ordered presentation then it preserves enriched sifted colimits, hence $L_o$ preserves ordinary sifted colimits, hence $L_o$ has a presentation. 
\end{proof}

\medskip\noindent We can now conclude what we shall need to know about functors on $\DL$.

\begin{theorem}\label{thm:presentmonop}
For a locally monotone functor $L':\DL\to\DL$ the following are equivalent:
\begin{enumerate}
\item $L'$ has a presentation by monotone operations and equations.
\item $L'$ preserves $\Pos$-enriched sifted colimits. 
\item $L'$ is the $\Pos$-enriched left Kan extension of its restriction to discretely finitely generated free distributive lattices.
\end{enumerate}
\end{theorem}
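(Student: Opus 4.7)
The plan is to assemble the three equivalences from results already established in the excerpt, since essentially all the hard work has been packaged into the preceding theorems.

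For the equivalence (2) $\Leftrightarrow$ (3), I would invoke Lemma~\ref{dense lemma}, which states that the inclusion $\mathbf J':\DL_\ff\hookrightarrow\DL$ exhibits $\DL$ as the free $\Pos$-enriched cocompletion of $\DL_\ff$ under enriched sifted colimits. By the universal property of free cocompletions recalled in the footnote after Definition~\ref{def:strongly_finitary}, precomposition with $\mathbf J'$ restricts to an equivalence between the category of enriched-sifted-colimit-preserving locally monotone functors $\DL\to\DL$ and the category $[\DL_\ff,\DL]$, with inverse given by left Kan extension along $\mathbf J'$. In particular, a locally monotone $L'$ preserves enriched sifted colimits if and only if the canonical comparison $\Lan_{\mathbf J'}(L'\mathbf J')\to L'$ is an isomorphism, which is precisely the content of (3). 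This is the $\Pos$-enriched analogue of Corollary~\ref{cor:when L is lan} and goes through without modification.

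For (1) $\Rightarrow$ (2), I would observe that a monotone presentation in the sense of the definition just before Theorem~\ref{thm:presentmonop} is, upon passing from $\langle\Sigma,\Gamma\rangle$ to $\langle D\Sigma,D\Gamma\rangle$, an ordered presentation in discrete arities in the sense of Definition~\ref{def:presentation2}. Thus Theorem~\ref{thm:eq-pres} applied to the ordered variety $\DL$ yields that $L'$ preserves $\Pos$-enriched sifted colimits.

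For (2) $\Rightarrow$ (1), I would apply Theorem~\ref{thm:eq-pres} in the other direction to obtain an ordered presentation of $L'$ in discrete arities. The category $\DL$ is simultaneously an ordered variety via $U':\DL\to\Pos$ and an ordinary variety via the composite $CU':\DL\to\Set$, since the lattice order on a distributive lattice is equationally definable from $\wedge$ by $a\le b\Leftrightarrow a\wedge b=a$. Hence the hypotheses of the theorem immediately preceding Theorem~\ref{thm:presentmonop} are fulfilled, and that theorem supplies a presentation by monotone operations and equations, yielding (1).

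The only potential obstacle is bookkeeping: confirming that the notion of monotone presentation in the definition just given really coincides (via the translation $\Sigma\mapsto D\Sigma$, $\Gamma\mapsto D\Gamma$) with an ordered presentation in discrete arities of $L'$ as a locally monotone functor, so that Theorem~\ref{thm:eq-pres} can be invoked freely in both directions. This is exactly what Diagram~\eqref{eq:AoA} and the discussion around it record, so no new argument is required.
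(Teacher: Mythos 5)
Your proposal is correct and matches the paper's intended argument exactly: the paper states Theorem~\ref{thm:presentmonop} as a summary of the preceding section, with (2)$\Leftrightarrow$(3) coming from Lemma~\ref{dense lemma}/Corollary~\ref{cor:when L is lan}, (1)$\Leftrightarrow$(2) from Theorem~\ref{thm:eq-pres} together with the theorem immediately preceding it (ordered presentation $\Rightarrow$ monotone presentation on a category that is both an ordered and an ordinary variety), and the translation $\langle\Sigma,\Gamma\rangle\leftrightarrow\langle D\Sigma,D\Gamma\rangle$ recorded in Diagram~\eqref{eq:AoA}. Nothing further is needed.
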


\noindent
As in Proposition~\ref{prop:mathbf L}, we now obtain that

\begin{corollary}\label{cor:L'isboldL'}
  If $T'$ is the the convex powerset functor, then the functor $L'$ of
  Example~\ref{exle:L'pcal} is isomorphic to the sifted colimits preserving functor $\mathbf L'$ whose restriction to $\DL_\ff$ is $P'T'^\op S'$ as in~Equation~\eqref{eq:Ldelta'}.
\end{corollary}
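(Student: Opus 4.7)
The plan is to mimic the proof of Proposition~\ref{prop:mathbf L}, but with the additional care that we now work in the $\Pos$-enriched setting, so both functors involved must be shown to be $\Pos$-enriched left Kan extensions of their restrictions to $\DL_\ff$.

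First I would verify that $L':\DL\to\DL$ from Example~\ref{exle:L'pcal} preserves $\Pos$-enriched sifted colimits. The presentation there is by the two unary monotone operations $\Box,\Diamond$ together with the rank-$1$ axioms that $\Box$ preserves finite meets, $\Diamond$ preserves finite joins, and the two ``interaction'' inequalities. Because $\Box$ preserves meets and $\Diamond$ preserves joins, both operations are forced to be monotone; hence this is a genuine presentation by monotone operations and equations in the sense of Section~\ref{pres_DL}. By Theorem~\ref{thm:presentmonop}, $L'$ therefore preserves $\Pos$-enriched sifted colimits, and by Corollary~\ref{cor:when L is lan} it is the $\Pos$-enriched left Kan extension of its restriction along $\mathbf{J}':\DL_\ff\hookrightarrow \DL$. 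By construction in Section~\ref{subsec:DLlogic}, $\mathbf L'$ is also such a left Kan extension of $P'T'^\op S'\mathbf J'$.

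Next, pass to the transposes. Let $\widehat{\delta'}:L'\to P'T'^\op S'$ be the transpose of the natural transformation $\delta':L'P'\to P'T'^\op$ defined in Example~\ref{exle:L'pcal}. It suffices to show that the component of $\widehat{\delta'}$ at every finitely discretely generated free distributive lattice $F'Dn$ is an isomorphism, that is, that
\[
\widehat{\delta'}_{F'Dn}:L'F'Dn \longrightarrow P'T'^\op S'F'Dn \;=\; \mathbf L'F'Dn
\]
is iso for every natural number $n$. Since $S'F'Dn$ is a finite poset (the prime filters of the free distributive lattice on $n$ discrete generators), and since the Remark following Example~\ref{exle:L'pcal} already records that $\delta'_X$ is an isomorphism whenever $X$ is a finite poset---a classical fact from~\cite{johnstone:stone-spaces,johnstone:vietoris-locales,Abramsky88,palmigiano:cmcs03-j} applied to the convex powerset $T'=\pcal'$---the component $\widehat{\delta'}_{F'Dn}$ is indeed an isomorphism.

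Finally, since both $L'$ and $\mathbf L'$ are $\Pos$-enriched left Kan extensions of their restrictions to $\DL_\ff$, and since $\widehat{\delta'}$ restricts on $\DL_\ff$ to a natural isomorphism, the universal property of left Kan extensions (Remark~\ref{rem:lan}) lifts it to a natural isomorphism $L'\cong\mathbf L'$ of endofunctors of $\DL$. The main point at which one needs to be careful is the enriched nature of the Kan extension: it is precisely Theorem~\ref{thm:presentmonop}, applied to the monotone presentation of $L'$, that guarantees $L'$ is an enriched (not merely ordinary) left Kan extension of its restriction, so that the isomorphism on $\DL_\ff$ propagates to all of $\DL$. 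Modulo that verification, the argument is a routine transcription of the proof of Proposition~\ref{prop:mathbf L}.
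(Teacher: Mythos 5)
Your proposal is correct and follows essentially the route the paper intends: the paper's own justification is simply ``as in Proposition~\ref{prop:mathbf L}'', i.e.\ observe that the transpose $\widehat{\delta'}$ is an isomorphism on discretely finitely generated free distributive lattices and that both $L'$ (via its monotone presentation, Example~\ref{exle:AoA} and Theorem~\ref{thm:presentmonop}) and $\mathbf L'$ (by construction) are enriched left Kan extensions along $\mathbf J'$. Your explicit attention to the enriched nature of the Kan extension is exactly the point the placement of the corollary after Theorem~\ref{thm:presentmonop} is meant to supply.
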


%========================================================%

\section{Positive coalgebraic logic}\label{sec:poscoalglog}

\medskip\noindent
The reader might find it useful to consult Section~\ref{sec:overview} first, even though it relies on some notation introduced in the next two subsections.

%========================================================%

\subsection{Morphisms of logical connections}\label{sec:morph}

We recall the logical connections (dual adjunctions, see~\cite{kurzvelebil:acs}) mentioned in Section~\ref{sec:coalgebras} between sets and Boolean algebras, and between posets and distributive lattices. Both are considered to be $\Pos$-enriched, where for the first logical connection the enrichment is discrete. They are related as follows: 
\begin{equation}
\label{morphconnect}
\vcenter{
\xymatrix@C=60pt@R=15pt{
\Set^{op} 
\ar @{} [r] |-{\perp} 
\ar@/_/[r]_-{P}
\ar[dd]_{D^{\opp}} 
& 
\BA
\ar@/_/[l]_-{S}
\ar[dd]^{W} 
\\
\\
\Pos^{op} 
\ar @{} [r] |-{\perp} 
\ar@/_/[r]_-{P'}
& 
{\DL}\ar@/_/[l]_-{S'}
}
}
\end{equation}
In the top row of the above diagram, we recall again for the reader's convenience that $P$ is the contravariant powerset functor, while $S$ maps a Boolean algebra to its set of ultrafilters. 
The bottom row has $P'$ mapping a poset to the distributive lattice of its 
upsets, and $S'$ associating to each distributive lattice the poset of 
its prime filters. 
As for the pair of functors connecting the two logical connections: $D$ was introduced earlier as the discrete functor, while $W$ is the functor associating to each Boolean algebra its underlying distributive lattice. 

\medskip\noindent It is easy to see that the pair $(D^\opp,W)$ is a 
{\em morphism of adjunctions\/} in the sense of~\cite[\textsection~IV.7]{maclane:cwm}.
This means that the following diagrams commute, and that the coherence condition below holds:
\begin{equation}
\label{eq:morph_of_adj}
\vcenter{
\xymatrix{
\Set^\op 
\ar[0,1]^-{P}
\ar[1,0]_{D^\op}
&
\BA
\ar[1,0]^{W}
\\
\Pos^\op
\ar[0,1]_-{P'}
&
\DL
}}
\qquad
\vcenter{\xymatrix{
\BA
\ar[0,1]^-{S}
\ar[1,0]_{W}
&
\Set^\op
\ar[1,0]^{D^\op}
\\
\DL
\ar[0,1]_-{S'}
&
\Pos^\op
}}
\qquad \mbox{$\epsilon' D^\opp = D^\opp\epsilon$}
\end{equation}
where $\epsilon$ and $\epsilon'$ are
the counits of $S\dashv P$ and $S'\dashv P'$, respectively.

\begin{remark}\label{rmk:functor-K}
It will turn out useful later to use not only that $D$ has, as mentioned in Section~\ref{sec:pos}, as a $\Pos$-enriched left adjoint the connected components functor $C$, but also that $W$ has a $\Pos$-enriched right adjoint $K$, mapping a distributive lattice $A$ to the Boolean algebra of complemented elements in $A$, also known as the centre of $A$, see~\cite{birkhoff:lt}. Then the mate of the first square in~\eqref{eq:morph_of_adj} under the above adjunctions, namely $PC^\op \to KP'$, is in fact an isomorphism (to see this, use that the connected components of a poset are precisely its minimal subsets which are both upward and downward closed). 
\end{remark}

%========================================================%

\subsection{Positive coalgebraic logic} 

We shall now expand the propositional logics $\BA$ and $\DL$ by modal operators. We start with an endofunctor $T$ of $\Set$ in the top left-hand corner of~\eqref{morphconnect}, and an endofunctor $T'$ 
of $\Pos$ in the bottom left-hand corner. We are mostly interested in the case where $T':\Pos\to\Pos$ is the posetification of $T$ (see Definition~\ref{def:extension}) and $L:\BA\to\BA$ and $L':\DL\to\DL$ are (the functors of) the associated logics as in~\eqref{eq:LFn} and~\eqref{eq:L'F'n}, in which case we denote the logics by boldface letters $\bf L$ and $\bf L'$. 

\medskip\noindent But some of the following hold under the weaker assumptions that 
$T'$ is an arbitrary extension of $T$, and that $L$ and $L'$ are arbitrary coalgebraic logics for $T$ and $T'$, respectively. 

\medskip\noindent Let therefore $T$ be an endofunctor of $\Set$, and $T'$ be an extension of $T$ to $\Pos$ as in~\eqref{eq:extension}. Logics for $T$, respectively $T'$ are given by functors $L:\BA\to\BA$ and $L':\DL\to\DL$ and by natural transformations 
\[
\delta:LP\to PT^\op \quad \quad \delta':L'P'\to P'{T'}^\op
\]
Intuitively, $\delta$ and $ \delta'$ assign to the syntax given by (presentations of) $L$ and $L'$ the corresponding one-step semantics in subsets, respectively upsets. 
To compare $L$ and $L'$ we need the isomorphism $\alpha:DT\to T'D$ from~\eqref{eq:extension} saying that $T'$ extends $T$, and also the relation $WP=P'D^\op$ from~\eqref{eq:morph_of_adj} (which formalizes the trivial observation that taking all upsets of a discrete set is the same as taking all subsets). 
Referring back to the introduction, we now make the following
definition.

\begin{definition}\label{def:pos-frag}
We say that a logic $(L',\delta')$ for $T'$ is \emph{a positive fragment} of the logic $(L,\delta)$ for $T$, if there is a natural transformation $\beta: L'W\to WL$ with $W\delta \circ \beta P =  P'\alpha^\opp \circ \delta'D^\opp$, or, in diagrams
\begin{equation}
\label{eq:fragment}
\vcenter{
\xymatrix@C=30pt{
\Set^\opp
\ar[0,1]^-{P}
\ar[1,0]_{T^\opp}
&
\BA
\ar[0,1]^-{W}
\ar[1,0]^{L}
\ar@{} [1,-1]|{\swarrow\delta}
&
\DL
\ar[1,0]^{L'}
\ar@{} [1,-1]|{\swarrow\beta}
&
\ar@{} [1,0]|{=}
&
\Set^\opp
\ar[0,1]^-{D^\opp}
\ar[1,0]_{T^\opp}
&
\Pos^\opp
\ar[0,1]^-{P'}
\ar[1,0]^{T'^\opp}
\ar@{} [1,-1]|{\swarrow\alpha^\opp}
&
\DL
\ar[1,0]^{L'}
\ar@{} [1,-1]|{\ \ \ \swarrow\delta'}
\\
\Set^\opp
\ar[0,1]_-{P}
&
\BA
\ar[0,1]_-{W}
&
\DL
&
&
\Set^\opp
\ar[0,1]_-{D^\opp}
&
\Pos^\opp
\ar[0,1]_-{P'}
&
\DL
}
}
\end{equation}
We call $(L',\delta')$ \emph{the maximal positive fragment} 
of $(L,\delta)$ if
$\beta$ is an isomorphism.
\end{definition}

\medskip\noindent Recall that we have defined in Sections~\ref{subsec:BAlogic} and~\ref{subsec:DLlogic}, respectively, the logics $\mathbf L$, $\mathbf L'$ induced by $T$ and (an extension) $T'$ as $\mathbf L = PT^\op S$ and $\mathbf L' = P'T'^\op S'$ on discretely finitely generated free objects. As explained in the introduction, our desired result is to prove that a certain canonically given $\mathbf L' W \to W\mathbf L$, denoted by $\boldsymbol\beta$, is an isomorphism.  
The difficulty, as well as the need for the proviso that $T$ preserves weak pullbacks, stems from the fact that in $\DL$ (as opposed to $\BA$) the class of functors determined on free algebras on finitely many discrete generators is strictly smaller than the class of functors determined on finitely presentable (=finite) algebras, as 
Example~\ref{ex:not strongly finitary} will show.

\medskip\noindent
As stepping stones, therefore, we first investigate what happens in the cases where 
\begin{itemize}
\item the functor $L'$ is determined by $P'T'^\op S'$ on all algebras, 
\item the functor $\bar L'$ is determined by $P'T'^\op S'$ on the full subcategory $\DL_f$ of finitely presentable algebras, 
\end{itemize}
before turning to the situation of the functor $\mathbf L'$ determined by $P'T'^\op S'$ on the full subcategory $\DL_\ff$ of strongly finitely presentable algebras (=free algebras on finitely many discrete generators).

%========================================================%

\subsection{Overview}\label{sec:overview}
To summarize the situation, consider
\begin{equation}\label{eq:P'T'S'}
\vcenter{
\xymatrix@R=20pt{
\DL \ar[rr]^{L',\ \bar L', \ \mathbf L'} && \DL\\
\DL_f \ar[u]^{J'}\ar[urr]|-{\ \bar L'_f\ } & \\
\DL_\ff 
\ar`l[uu]`[uu]^{\mathbf J'}[uu]
\ar[u]^{J''}\ar[uurr]_{\mathbf L'_\ff}
}
}
\end{equation}
where we use subscripts to show the restrictions of $\bar L'$ and $\mathbf L'$ to the corresponding subcategories of $\DL$ where these functors coincide by definition with $P'T'^\op S'$. 

\medskip\noindent
Section~\ref{sec:allalgebras} treats the case of $L'$. Using the dual adjunction of $\DL$ and $\Pos$, one easily obtains the required isomorphism $\beta: L'W \to WL$. But on the other hand, presenting the logic of $L'$ would require operations of infinite arity. 

\medskip\noindent
Section~\ref{finitary} achieves the restriction to finitary syntax, by reducing $L'$ to a functor \linebreak $\bar L':\DL \to \DL$ that is determined by the action $P'T'^\op S'$  on the full subcategory $\DL_f$ on \emph{finitely presentable} distributive lattices. This setting has the advantage that $\DL_f$ is dually equivalent to $\Pos_f$, from which we obtain again the required isomorphism $\bar \beta:\bar L'W \to W \bar L $. On the other hand, such functors $\bar L'$ are in general not presentable by \emph{monotone} operations and equations.

\medskip\noindent
Finally, Section~\ref{sec:DLff} restricts to the category $\DL_\ff$ of \emph{discretely finitely generated free} distributive lattices, which guarantees that the logic of $\mathbf L'$ can be indeed presented by monotone operations and equations, and that the category of $\mathbf L'$-algebras is a(n ordered) variety of modal algebras. In order to obtain that the corresponding $\boldsymbol\beta:\mathbf L 'W \to W \mathbf L$ is an isomorphism, we give conditions under which the functors $\bar L'$ and $\mathbf L'$ coincide. This is done by determining when $\bar L'_f$ in \eqref{eq:P'T'S'} is a left Kan extension of its restriction along $J'':\DL_\ff \to \DL_f$.

%========================================================%

\subsection{The case of $L'=P'T'^\op S'$ on all algebras}\label{sec:allalgebras}

We shall associate to any extension $\alpha:DT\to T'D$ the pairs $(L,\delta)$ and $(L',\delta')$ corresponding to $T$ and $T'$ respectively, with $L=PT^\op S$ and $\delta=PT^\opp\epsilon:PT^\opp SP\to
PT^\opp
$, $L'=P'{T'}^\op S'$ and $\delta'$ being defined analogously. We then immediately obtain an isomorphism $\beta$ by the following:

\begin{proposition}\label{prop:canonical_fragment}
Given an extension $\alpha:DT\to T'D$, the natural isomorphism $\beta:L'W\to WL$ given by the composite below 
\[
\xymatrix{
\BA
\ar[0,1]^-{S}
\ar[1,0]_W
&
\Set^\opp
\ar[1,0]_{D^\opp}
\ar[0,1]^-{T^\opp}
&
\Set^\opp
\ar[1,0]^{D^\opp}
\ar[0,1]^-{P}
&
\BA
\ar@{<-} `u[lll] `[lll]_-{L} [lll]
\ar[1,0]^W
\\
\DL
\ar[0,1]_-{S'}
%\ar[-1,0]^{W}
&
\Pos^\opp
\ar[0,1]_-{T'^\opp}
&
\Pos^\opp
\ar[0,1]_-{P'}
\ar@{} [-1,-1]|{\nearrow\alpha^\opp}
&
\DL
%\ar[-1,0]_{W}
\ar@{<-} `d[lll] `[lll]^-{L'} [lll]
}
\]
exhibits $L'=P'{T'}^\op S'$ as the maximal 
positive fragment of $L=PT^\op S$.
\end{proposition}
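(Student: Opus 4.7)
The plan is to read off from the displayed diagram what $\beta$ actually is, then verify in turn that it is a natural isomorphism and that it satisfies the coherence condition of Definition~\ref{def:pos-frag}. Using the identities $WP=P'D^\op$ and $S'W=D^\op S$ which express the fact that $(D^\op,W)$ is a morphism of adjunctions (see~\eqref{eq:morph_of_adj}), the composite reads as
\[
L'W \;=\; P'T'^\op S'W \;\cong\; P'T'^\op D^\op S \;\xrightarrow{\,P'\alpha^\op S\,}\; P'D^\op T^\op S \;\cong\; WPT^\op S \;=\; WL,
\]
so $\beta$ is simply $P'\alpha^\op S$ after these two endpoint identifications. Since $\alpha:DT\to T'D$ is an isomorphism and $P'$, $S$ are functors, $\beta$ is an isomorphism; hence, once it is shown to be a positive fragment, it will automatically be the \emph{maximal} one in the sense of Definition~\ref{def:pos-frag}.

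It remains to verify the coherence equation $W\delta\circ\beta P=P'\alpha^\op\circ\delta' D^\op$. Unfolding the one-step semantics we have $\delta=PT^\op\epsilon$, with $\epsilon:SP\to\id_{\Set^\op}$ the counit of $S\dashv P$, and analogously $\delta'=P'T'^\op\epsilon'$. Evaluated at a set $X$, the left-hand side becomes
\[
W\delta_X\circ\beta_{PX} \;=\; P'D^\op T^\op\epsilon_X \,\circ\, P'\alpha^\op_{SPX},
\]
while the coherence $\epsilon'D^\op=D^\op\epsilon$ from~\eqref{eq:morph_of_adj} turns the right-hand side into
\[
P'\alpha^\op_X\circ\delta'_{D^\op X} \;=\; P'\alpha^\op_X \,\circ\, P'T'^\op D^\op\epsilon_X.
\]
The required equality then reduces, after stripping off the outer $P'$, to the naturality square of $\alpha^\op:T'^\op D^\op\to D^\op T^\op$ evaluated on the morphism $\epsilon_X:SPX\to X$ of $\Set^\op$, which holds by definition of a natural transformation.

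The only delicate point I anticipate is the bookkeeping: carefully tracking the directions of the various natural transformations, the precise form of the two identifications $WP\cong P'D^\op$ and $S'W\cong D^\op S$, and the fact that the coherence of the morphism of adjunctions gives $\epsilon'D^\op=D^\op\epsilon$ (rather than the companion identity on units). Once this is in place, naturality of $\beta$ is inherited from that of $\alpha$ after whiskering with $P'$ and $S$, and the coherence equation is, as sketched above, a single instance of naturality of $\alpha^\op$ combined with this coherence identity.
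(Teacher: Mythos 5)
Your proof is correct and follows essentially the same route as the paper, which simply remarks that everything follows from $(D^\op,W)$ being a morphism of adjunctions; you have spelled out the details (the identification $\beta=P'\alpha^\op S$ via $S'W=D^\op S$ and $WP=P'D^\op$, invertibility from that of $\alpha$, and the coherence equation reducing via $\epsilon'D^\op=D^\op\epsilon$ to naturality of $\alpha^\op$ at $\epsilon_X$) accurately.
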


\begin{proof}
This follows from $(D^\opp,W)$ being a morphism of adjunctions (see~\eqref{eq:morph_of_adj}). 
\end{proof}

%========================================================%

\subsection{The case of $\bar L'=P'T'^\op S'$ on finitely presentable algebras} 
\label{finitary}

A similar result holds if we define logics via $PT^\op SA$ for finitely
presentable $A$, as we are going to show now. 
To this end, we use the subscript $(-)_f$ to denote the restriction of categories and (domain-codomain) functors to finite 
objects as e.g. when writing as earlier the dense inclusions $I:\Set_f\to\Set$, $I':\Pos_f\to\Pos$, $J:\BA_f\to\BA$ and $J':\DL_f\to\DL$. 

\medskip\noindent Since $\Pos$ is locally finitely presentable as a closed category, and the underlying ordinary categories $\Set_o$, $\DL_o$, $\BA_o$ are also locally finitely presentable, it follows from~\cite{kelly:lfp} 
that the finitely presentable objects in all the above categories are precisely the same as in the ordinary case 
i.e., the ones for which the underlying sets are finite.

\medskip\noindent Note that we have the following commuting diagram
\begin{equation}\label{eq:square_of_transformations}
\xymatrix{ 
S_f\dashv P_f \ar[0,2]^-{(D_f^\op,W_f)}
 \ar[1,0]_{(I^\op,J)} & & S'_f \dashv P'_f \ar[1,0]^{(I'^\op,J')} \\
 S\dashv P \ar[0,2]_-{(D^\op,W)} & & S'\dashv P'
 }
\end{equation} 
in the category of transformations of adjoints.

\medskip\noindent Define now $(\bar L,\bar{\delta})$ for $T$ as 
\[
\bar L=\Lan_J(PT^\op S J) \qquad \mbox{ and } \qquad \bar{\delta}:\bar LP \to PT^{op}
\]
as the adjoint transpose of $\bar L\to  PT^\op S$ arising from the universal property of the left Kan extension $\bar L$. By construction, $\bar L$ is finitary and is given by $PT^\op S$ on finite(ly presentable) Boolean algebras. Similarly, obtain $(\bar L',\bar \delta')$ for $T'$ on distributive lattices.

\medskip\noindent The forgetful functor $W:\BA\to \DL$ is finitary, being a left adjoint (Section~\ref{sec:morph}).
Thus in order to obtain an (iso)morphism $ \bar \beta:\bar L'W \to W\bar L $ 
between finitary functors, it will be enough to provide its restriction along $J$ to finitely presentable objects. But we can get such a transformation from the isomorphism of Proposition~\ref{prop:canonical_fragment}, namely 
\begin{equation}
\label{eq:bar_beta_f}
\bar \beta_f: \bar L' W J \cong \bar L'J'W_f \cong P'{T'}^\op S'J'W_f \cong WPT^\op SJ \cong W\bar L J
\end{equation}
where the second and the last isomorphisms are given by the units of left Kan extensions. 

\begin{remark}\label{rem:fpL=sfpL}
Recall the definition of $\mathbf L$ from~Equation~\eqref{eq:LFn}. Since every finitely presentable non-trivial Boolean algebra is a retract of a finitely generated free algebra, we can identify $\mathbf L= \bar L$, see e.g.~\cite[Proposition~3.4]{kurz-rosicky:strong}. 
\end{remark}

\medskip\noindent According to the above, we then have  

\begin{proposition}\label{prop:canonical_finitary_fragment}
The isomorphism $\bar\beta$ exhibits $\bar L=\Lan_{J'}(P'{T'}^\op S'J')$ as the maximal positive fragment of $(\mathbf L,\mathbf\delta)$.
\end{proposition}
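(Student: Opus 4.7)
The plan is to reduce the statement to its content on finitely presentable objects, where everything is already verified by Proposition~\ref{prop:canonical_fragment} (via the bottom face of~\eqref{eq:square_of_transformations}), and then to transport the result along left Kan extension. First I would observe that both $\bar L' W$ and $W\bar L$ are finitary functors $\BA\to\DL$: $\bar L$ is finitary by construction as $\Lan_J(PT^\op S J)$, and similarly $\bar L'$ is finitary as $\Lan_{J'}(P'T'^\op S' J')$; the forgetful functor $W:\BA\to\DL$ is a left adjoint (with right adjoint $K$, see Remark~\ref{rmk:functor-K}), hence cocontinuous, so in particular finitary. Since $J:\BA_f\to\BA$ is dense, the usual Kan-extension argument gives $W\cong \Lan_J(WJ)$, and composing with the finitary functor $\bar L'$ yields $\bar L' W\cong \Lan_J(\bar L' W J)$. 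Dually, cocontinuity of $W$ together with $\bar L J=PT^\op SJ$ gives $W\bar L\cong \Lan_J(W\bar L J)$. Thus both functors are the left Kan extension along $J$ of their restrictions to $\BA_f$.

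Next I would check that the composite~\eqref{eq:bar_beta_f} is natural in $\BA_f$. Each of the four constituent isomorphisms is natural: the first and the last come from the commutativity $WJ\cong J' W_f$ recorded in~\eqref{eq:square_of_transformations} (together with $\bar L J\cong PT^\op S J$, which holds as the unit of a left Kan extension along the fully faithful $J$); the second comes from the analogous unit of $\bar L'$ along the fully faithful $J'$, which is iso; and the third is the restriction to the finite case of the natural isomorphism of Proposition~\ref{prop:canonical_fragment}, in view of~\eqref{eq:square_of_transformations}. Hence $\bar\beta_f$ is a natural isomorphism $\bar L' WJ\to W\bar L J$, and the universal property of $\Lan_J$ extends it uniquely to a natural transformation $\bar\beta:\bar L' W\to W\bar L$. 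Because both $\bar L' W$ and $W\bar L$ are the Kan extensions of these restrictions and $\bar\beta_f$ is a componentwise isomorphism, so is $\bar\beta$.

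It remains to verify the compatibility condition of Definition~\ref{def:pos-frag}: the equality $W\bar\delta\circ\bar\beta P=P'\alpha^\op\circ\bar\delta' D^\op$ as natural transformations $\bar L' WP\to WPT^\op=P'D^\op T^\op$. By construction $\bar\delta$ and $\bar\delta'$ are defined as the adjoint transposes of the canonical maps $\bar L\to PT^\op S$ and $\bar L'\to P'T'^\op S'$ provided by left Kan extension; on finitely presentable algebras these transposes are (the transposes of) identities. Restricting both sides of the compatibility equation along $SJ:\BA_f^\op\to\Set^\op$ (equivalently, evaluating at finite sets, since $SJ$ is an equivalence onto $\Set_f^\op$), the equation thus collapses to the equality already established in Proposition~\ref{prop:canonical_fragment}, in view of the coherence condition $\epsilon'D^\op=D^\op\epsilon$ from~\eqref{eq:morph_of_adj}. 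A finitarity argument like the one above then transports this equality from finite sets to all of $\Set^\op$: both sides are natural transformations between functors that, composed with $S$, are left Kan extensions along $J$ of their restrictions to $\BA_f$.

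The main obstacle is the last diagrammatic verification. The two sides of the required equation are constructed differently, one using the unit of $\bar L'$ together with the pasting $\alpha^\op$, the other using $\bar\beta$ followed by $W\bar\delta$, and reconciling them requires the interplay of the transposes $\bar\delta$, $\bar\delta'$ with the isomorphism $\bar\beta_f$ and the morphism-of-adjunctions square~\eqref{eq:square_of_transformations}. I expect this to reduce, after a routine but slightly tedious diagram chase using the triangle identities for $S\dashv P$ and $S'\dashv P'$, to the coherence condition $\epsilon' D^\op=D^\op\epsilon$.
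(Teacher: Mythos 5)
Your proposal is correct and follows essentially the same route as the paper: define $\bar\beta_f$ on $\BA_f$ by composing the Kan-extension units with the restriction of the isomorphism of Proposition~\ref{prop:canonical_fragment}, extend along $J$ using finitarity of $\bar L$, $\bar L'$ and cocontinuity of $W$, and verify the compatibility condition of Definition~\ref{def:pos-frag} by reducing it to the finite case via~\eqref{eq:square_of_transformations} and the coherence $\epsilon' D^\op = D^\op\epsilon$. The paper's proof is exactly this reduction (stated as a single pasting-diagram equality over $\Set_f^\op$), and it is no more explicit than you are about the final diagram chase.
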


\begin{proof}
The easiest way to check~Equation~\eqref{eq:fragment} is to show that $\bar \beta_f$, defined by~Equation~\eqref{eq:bar_beta_f}, fulfills
\begin{equation*}\label{eq:sf_fragment}
\vcenter{
%\xymatrixrowsep{10pt}
\xymatrix@R=7pt@C=35pt{
&
\Pos_f^\op
\ar[1,1]^{P'_f}
%\ar@{}[2,0]|{\downarrow\pi}
&
\\
\Set_f^\op
\ar[-1,1]^{D_f^\op}
\ar[1,1]^{P_f}
\ar[2,0]_{T^\op I^\op}
&
&
\DL_f
\ar[2,0]^{L'J'}
\\
\ar@{}[1,1]|{\downarrow\delta_f I^\op}
&
\BA_f
\ar[-1,1]^{W_f}
\ar[2,0]|-{L J}
\ar@{}[1,1]|{\downarrow\bar \beta_f}
&
\\
\Set^\op
\ar[1,1]_{P}
&
&
\DL
\\
&
\BA
\ar[-1,1]_{W}
&
}
}
\quad
=
\quad
\vcenter{
%\xymatrixrowsep{1pc}
\xymatrix@R=10pt@C=40pt{
&
\Pos_f^\op
\ar[1,1]^{P'_f}
\ar[2,0]|{T'^\op I'^\op}
&
\\
\Set_f^\op
\ar[-1,1]^{D_f^\op}
\ar[2,0]_{T^\op I^\op}
\ar@{}[1,1]|{\downarrow\alpha^\op I^\op}
&
\ar@{}[1,1]|{\downarrow\delta'_f I'^\op}
&
\DL_f
\ar[2,0]^{L' J'}
\\
&
\Pos^\op
\ar[1,1]_{P'}
%\ar@{}[2,0]|{\downarrow\pi}
&
\\
\Set^\op
\ar[-1,1]_{D^\op}
\ar[1,1]_{P}
&
&
\DL
\\
&
\BA
\ar[-1,1]_{W}
&
}
}
\end{equation*}
But this follows from Proposition~\ref{prop:canonical_fragment}
and~Equation~\eqref{eq:square_of_transformations}.
\end{proof}

\noindent This proposition does not yet give us the desired result, as the endofunctor $\bar L'$ on $\DL$ is not necessarily determined by its action on \emph{discretely finitely generated free algebras} 
and, therefore, is not guaranteed to be equationally definable and to give rise to a variety of modal algebras.

\begin{example}\label{ex:not strongly finitary}
We give an example of a locally monotone functor $\bar L': \DL\to\DL$
that is finitary but does not preserve sifted colimits i.e., it is not determined by its action on discretely finitely generated free algebras.
Consider the composite (comonad) 
$$
\bar L' : \xymatrix{
\DL
\ar[0,1]^-{K}
&
\BA
\ar[0,1]^-{W}
&
\DL
}
$$
where $K$ is the right adjoint of the forgetful functor $W$ (recall that we met $K$  in Remark~\ref{rmk:functor-K} assigning to each distributive lattice $A$ the Boolean subalgebra $KA$ of its complemented elements). We shall exhibit below a reflexive coinserter that it is not preserved by $\bar L '$. Since reflexive coinserters are sifted colimits, this means that $\bar L':\DL\to\DL$ does not preserve (all) sifted colimits.

Let $e:D2\to X$ be the obvious embedding of the two-element
discrete poset into the poset $X$ with three elements
$0$, $1$ and $\top$, satisfying $0<\top$ and $1<\top$. 
Then $e$ is a coreflexive inserter in $\Pos$ of the 
pair $f,g:X\to Y$, where $Y$ is the poset
$$
\xy
\POS (000,010) *+{\top_1} = "t1"
   ,  (010,010) *+{\top_2} = "t2"
   ,  (000,000) *+{0} = "0"
   ,  (010,000) *+{1} = "1"
%%%
\POS "0" \ar@{-} "t1"
\POS "1" \ar@{-} "t1"     
\POS "0" \ar@{-} "t2"
\POS "1" \ar@{-} "t2" 
\endxy
$$
and $f(0)=g(0)=0$, $f(1)=g(1)=1$, $f(\top)=\top_1$,
$g(\top)=\top_2$. 

By applying $P':\Pos^\op\to\DL$ to the above coreflexive
inserter in $\Pos$ we obtain
a reflexive coinserter in $\DL$, which can either be checked directly or by appealing to Proposition~\ref{P'corefl eq} below.
Observe that $P'e:P'D2\to P'X$ is the embedding of the four-element
Boolean algebra into the distributive lattice
$$
\xy
\POS (000,000) *+{\{0,\top\}} = "l"
   , (020,000) *+{\{1,\top\}} = "r"
   , (010,-10) *+{\{\top\}} = "b"
   , (010,010) *+{\{0,1,\top\}} = "t"
   , (010,-20) *+{\emptyset} = "bb"
%%%
\POS "bb" \ar@{-} "b"
\POS "b" \ar@{-} "l"     
\POS "b" \ar@{-} "r"
\POS "l" \ar@{-} "t" 
\POS "r" \ar@{-} "t"
\endxy
$$
Then it is easy to see that $\bar L'P'e: \bar L'P'X\to \bar L'P'D2$ fails to be surjective
(since $\bar L'P'D2$ has four elements and $\bar L'P'X$ only two elements). Hence
$\bar L'P'e$ is not the coinserter in $\DL$ of the parallel pair $\bar L'P'f$, $\bar L'P'g$.

Although the locally monotone functor $\bar L'=WK$ fails to preserve reflexive coinserters, it is however finitary, being a composite of such. Indeed, $W$ is left adjoint, while for $K$ notice the following: $K$ is a right adjoint functor between locally finitely presentable categories, whose left adjoint $W$ preserves finitely presentable objects (see paragraph (2) in the proof of \cite[Theorem~1.66]{adamekrosicky}). %
%More in detail: consider a filtered colimit $A=\mathsf{colim}A_i$ in $\DL$; then we have
%\begin{align*}
%&\BA(B, K\mathsf{colim} A_i) &\cong & \, \DL(WB, \mathsf{colim} A_i) &\cong & \, \mathsf{colim} \, \DL(WB, A_i) \\
%& &\cong & \, \mathsf{colim} \, \BA(B, KA_i) &\cong & \, \BA(B,\mathsf{colim}KA_i)
%\end{align*}
%for every $B$ a finite(ly presentable) Boolean algebra. Use now that each Boolean algebra is a filtered colimit of such, and $W$ preserves filtered colimits being left adjoint, to obtain that 
%\begin{align*}
%& \BA(B,K\mathsf{colim} A_i) &\cong & \, \BA(\mathsf{colim} B_j, K\mathsf{colim} A_i) 
%& \cong & \, \mathsf{lim} \, \BA(B_j, K\mathsf{colim} A_i) \\
%&& \cong & \, \mathsf{lim} \, \BA(B_j, \mathsf{colim} \, KA_i) 
%& \cong & \, \BA(\mathsf{colim} B_j, \mathsf{colim} \, KA_i) \\
%&& \cong & \, \BA(B, \mathsf{colim} KA_i)
%\end{align*}
%for each Boolean algebra $B$. Now the result follows from the Yoneda lemma. 
%\qed
\end{example}

\medskip\noindent The next paragraph will investigate when the functor given by $\bar L'A=P'T'^\op S'A$ on finitely presentable $A$ is not only finitary but also  preserves sifted colimits.

%========================================================%

\subsection{The case of $\mathbf L' =P'T'^\op S'$ on discretely finitely generated free algebras} \label{sec:DLff}

Recall that in Section~\ref{sec:sifted} we have denoted by $\mathbf J:\BA_\ff\to\BA$ and $\mathbf J':\DL_\ff\to\DL $ the inclusion functors of the full subcategories spanned by the algebras which are free on finite discrete posets.

\begin{definition} 

Let $T'$ be an endofunctor of $\Pos$. We define the logic for $T'$ to be the pair $(\mathbf L',\boldsymbol \delta')$, where: 
\begin{enumerate}
\item 
$\mathbf L':\DL\to\DL$ is an endofunctor of $\Pos$ preserving sifted colimits, whose restriction to discretely finitely generated free distributive lattices is $P'T'^\op S'\mathbf J'$, that is, $\mathbf L'=\Lan_{\mathbf J'}(P'{T'}^{\op}S'\mathbf J')$. 
\[
\xymatrix{\DL_\ff \ar@{}[1,3]|{\nearrow} \ar[0,3]^{\mathbf J'} \ar[1,0]_{\mathbf J'} & & & \DL \ar[d]^{\mathbf L'} \\
\DL \ar[0,1]_-{P'} & \Pos^\opp \ar[0,1]_-{{T'}^\op} & \Pos^\op \ar[r]_{S'} & \DL %\ar@{<-} `d[lll] `[lll]^-{L'} [lll] 
 } 
\]
\item 
$\boldsymbol \delta':\mathbf L'P'\to P'{T'}^{\op}$ is the pasting composite 
\begin{equation*} \label{eq:canonical_strongly_finitary_logics}
\xymatrix{ 
\DL \ar[d]_{\mathbf L'} \ar[r]^{S'} & \Pos^\op \ar[r]^{{T'}^\op} & \Pos^\op \ar[d]_{P'} 
\ar@<+1.25pt>@{-}`r[dr][dr] 
\ar@<-1.25pt>@{-}`r[dr][dr]
& \\
\DL \ar@{}[urr]|{\nearrow} \ar@{=}[rr] && \DL \ar[r]_{S'} \ar@{}[ur]|{\nearrow\eps'} & \Pos^\op
} \end{equation*}
that is, the adjoint transpose of the natural transformation $\mathbf L'\to P'{T'}^{\op}S'$, which in turn is given by the universal property of the left Kan extension $\mathbf L'$.
\end{enumerate}

\end{definition}

\begin{remark} 
By Corollary~\ref{cor:when L is lan}, $\mathbf L'$ defined above preserves sifted colimits. Thus, by Theorem~\ref{thm:eq-pres}, the functor $\mathbf L'$ admits an equational presentation by monotone operations, which subsequently gives rise to a positive modal logic concretely given in terms of modal operators and axioms. 
\end{remark}

\noindent Recall from the previous section that we have introduced the functor $\bar L'$, which on finitely presentable distributive lattices is $P'T'^\op S'$, while now we have defined ${\bf L'} = P'T'^\op S'$ only on discretely finitely generated free distributive lattices. The next theorem will provide sufficient conditions for these two functors $\bar L'$ and $\mathbf L'$ to coincide. 

\medskip\noindent First note that the restrictions of $\bar L'$ and $\mathbf L'$ to $\DL_\ff$ coincide by definition with $P'T'^\op S'\mathbf J'$. Abbreviate as in \eqref{eq:P'T'S'} $\bar L'_f = \bar L' J'$ and $\mathbf L'_\ff=\mathbf L'\mathbf J'$. Also observe that due to \cite[\mbox{Theorem~4.47}]{kelly:enriched}, the functor $\mathbf L'=\Lan_{\mathbf J'}\mathbf L'_\ff$ can equally be expressed as the iterated Kan extension 
$$
\Lan_{J'} (\Lan_{J''} \mathbf L'_\ff)
$$
Because $\bar L'=\Lan_{J'}\bar L'_f$, it will be then enough to show that $\bar L'_f=\Lan_{J''} \mathbf L'_\ff$, or, in other words, that $P'T'^\op S'J'$ is the left Kan extension of its restriction to $J''$. 
\begin{equation}\label{eq:P'T'S'2}
\vcenter{
\xymatrix@R=20pt{
\DL \ar[rr]^{\bar L' \ , \  \mathbf L'} && \DL\\
\DL_f \ar[u]^{J'}\ar[urr]|-{\ \bar L'_f\ } & \\
\DL_\ff \ar`l[uu]`[uu]^{\mathbf J'}[uu]
\ar[u]^{J''}
\ar[uurr]_{\mathbf L'_\ff}
}
}
\end{equation}
This will follow once we have shown that $J''$ is dense and that $P'T'^\op S'J'$ preserves the colimits of its density presentation.

\begin{theorem}\label{thm:DLffisdense}
The inclusion $J'':\DL_\ff\to\DL_f$ from discretely finitely generated free distributive lattices to finitely presentable distributive lattices is dense. A density presentation is given by reflexive coinserters.
\end{theorem}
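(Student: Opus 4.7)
The plan is to invoke the enriched density theorem~\cite[Theorem~5.29]{kelly:enriched}, which reduces the claim to two assertions: (i) every $A \in \DL_f$ admits a reflexive coinserter presentation $F'Dm \rightrightarrows F'Dn \to A$ with $F'Dm, F'Dn \in \DL_\ff$, and (ii) each representable $\DL_f(F'Dk,-):\DL_f\to\Pos$ preserves such coinserters. Together these will yield both the density of $J''$ and the fact that reflexive coinserters form a density presentation.

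For (i), I would fix $A \in \DL_f$ and set $n=|A|$, so that the identity $Dn\to U'A$ transposes to a surjection $q:F'Dn\twoheadrightarrow A$. Since $\DL$ is a $\Pos$-enriched finitary variety (Lemma~\ref{dense lemma}) in which enriched finite presentability coincides with the ordinary one, $A$ admits a finite inequational presentation over these $n$ discrete generators by some $t_1\leq s_1, \ldots, t_k\leq s_k$ between DL-terms. Setting $m=k+n$, I would take $d^0, d^1:F'Dm\rightrightarrows F'Dn$ sending the first $k$ generators to $t_i$ and $s_i$ respectively and fixing the remaining $n$ generators identically; the map $i:F'Dn\to F'Dm$ embedding into the last $n$ generators supplies a common right inverse, so the pair is reflexive. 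Because $U'$ is $\Pos$-monadic with a strongly finitary monad, and reflexive coinserters are sifted, $U'$ creates reflexive coinserters; the coinserter in $\DL$ is thus computed in $\Pos$ via the preorder closure of Remark~\ref{rem:construction_coinserter}, which by choice of the $t_i, s_i$ yields exactly $A$.

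For (ii), the adjunction $F'\dashv U'$ gives $\DL(F'Dk,-)\cong\Pos(Dk,U'(-))\cong (U'(-))^k$. The functor $U'$ preserves reflexive coinserters as above, while $(-)^k:\Pos\to\Pos$---being a finite product---preserves sifted colimits, and hence reflexive coinserters, by the very definition of sifted weight recalled in Section~\ref{sec:sifted}. This will verify the hypotheses of~\cite[Theorem~5.29]{kelly:enriched} and conclude the proof.

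The main obstacle will be in step (i): securing a \emph{finite} reflexive coinserter presentation that uses only \emph{discrete} arities. The finiteness of the inequational presentation relies on $\DL$ being a finitary $\Pos$-enriched variety, combined with the coincidence of enriched and ordinary finite presentability already noted in the paper. Reflexivity is then arranged by the padding trick above, so neither ingredient is deep individually; the care is in combining them so that the coinserter in $\DL$ can be transported, via $U'$, to the concrete description of coinserters in $\Pos$.
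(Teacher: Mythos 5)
Your proposal is correct, and your part (ii) --- the $J''$-absoluteness of reflexive coinserters via $\DL_f(F'Dk,-)\cong\Pos(Dk,U'-)\cong(U'-)^k$, with $U'$ creating them and finite powers preserving sifted colimits --- is exactly the paper's argument. Where you genuinely diverge is in showing that every finite distributive lattice lies in the closure of $\DL_\ff$ under reflexive coinserters. The paper does this in two stages: it first observes that each free lattice $F'X$ on a finite \emph{poset} $X$ is a reflexive coinserter of objects of $\DL_\ff$ (apply the colimit-preserving $F'$ to the truncated nerve of Proposition~\ref{poset=refl_coins_discrete}), and then exhibits an arbitrary finite $A$ as a coinserter of free lattices on finite posets via the counit $\eps_A:F'U'A\to A$, made reflexive by a padding trick through the coproduct $F'U'A+F'U'A'$. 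Your route instead produces a single one-step presentation $F'Dm\rightrightarrows F'Dn\to A$ with both vertices already in $\DL_\ff$, by listing inequational relations and padding the arity to force reflexivity; this is more elementary and even yields the slightly stronger fact that one reflexive coinserter suffices, with no iterated closure. The one point you should make explicit is why a \emph{finite} generating set of inequations exists and why the resulting coinserter is exactly $A$: the cleanest justification is that $\DL$ is locally finite, so $F'Dn$ is itself finite and you may take as relations \emph{all} pairs $(u,v)$ with $q(u)\le q(v)$; then the zig-zag closure of Remark~\ref{rem:construction_coinserter} is that kernel preorder on the nose, and the quotient is $A$. (Local finiteness is also what the paper's own remark identifies as the only property of $\DL$ its proof uses, so the two approaches run on the same fuel.) A last, minor item required by the notion of density presentation is that the relevant colimits exist in $\DL_f$, i.e., that a reflexive coinserter of finite lattices is again finite --- immediate, but worth a sentence.
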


\begin{proof}
According to the definition of density presentation \cite[Theorem~5.19]{kelly:enriched}, we have to show that reflexive coinserters exist in $\DL_f$, that they are $J''$-absolute, and that every object in $\DL_f$ can be constructed from objects in $\DL_\ff$ using reflexive coinserters.

First, note that $\DL_f$ has reflexive coinserters since a distributive lattice is in $\DL_f$ if and only if it is finite, and since a coinserter of finite distributive lattices is finite.

Second, to say that a reflexive coinserter is $J''$-absolute is to say that it is preserved by $\DL(F'Dn, -)$ for all finite sets $n$. But $\DL(F'Dn, -)\cong \Pos(Dn,U'-)$, which preserves reflexive coinserters since finite products preserve reflexive coinserters, and so does $U'$.

Finally, we have to prove that every finite distributive lattice is in the closure of $\DL_\ff$ under reflexive coinserters. Notice that every distributive lattice $F'X$ which is free over a finite poset $X$ is  such a reflexive coinserter. This is immediate from $F'$ preserving colimits and from Proposition~\ref{poset=refl_coins_discrete} which presents any poset $X$ as a reflexive coinserter of discrete posets. 
It remains to show that every finite distributive lattice $A$ is a reflexive coinserter of finitely generated free ones. 
Let  $A$ be a finite distributive lattice and consider the counit 
$$
\eps_A : F'U'A\to A
$$ 
of the $\Pos$-enriched adjunction $F'\dashv U':\DL\to\Pos$. Since $\eps_A$ is surjective, it is a coinserter of some pair 
$$
\xymatrix{
A' 
\ar@<.5ex>[r]^-{l'} 
\ar@<-.5ex>[r]_-{r'} 
& 
F'U'A
}
$$ 
(by factoring the pair through its image, we can assume without loss of generality that $A'$ is finite). Now pre-compose this pair with $\eps_{A'} : F'U'A' \to A'$ to obtain 
\begin{equation}\label{eq:coins free finite posets}
\xymatrix@C=35pt{
F'U'A' 
\ar@<.5ex>[r]^-{l'\circ \eps_{A'}} 
\ar@<-.5ex>[r]_-{r'\circ\eps_{A'}}
& 
F'U'A 
\ar[r]^-{\eps_A} 
& 
A
}
\end{equation}
which is again a coinserter of free distributive lattices on \emph{finite} posets since $\eps_{A'}$ is surjective. 
Notice that we can always turn the coinserter~\eqref{eq:coins free finite posets} into a \emph{reflexive} one, namely
\begin{equation}\label{eq:refl coins free finite posets}
\xymatrix@C=35pt{
F'U'A+F'U'A' 
\ar@<.5ex>[r]^-{l} 
\ar@<-.5ex>[r]_-{r}
& 
F'U'A 
\ar[r]^-{\eps_A} 
& 
A
}
\end{equation}
where the parallel arrows $l=[\id_{F'U'A}, l'\circ\eps_{A'}]$ and $r=[\id_{F'U'A}, r'\circ\eps_{A'}]$ are given by the universal property of the coproduct, with common splitting $\mathsf{in}:F'U'A \to F'U'A +F'U'A'$ provided by the canonical injection into the coproduct, as the diagram below indicates:
$$
\xymatrix@C=50pt{
F'U'A' 
\ar[r]^-{\eps_{A'}} 
\ar[d]_{\mathsf{in'}}
&
A' 
\ar@<-0.4ex>[d]_{r'}
\ar@<0.4ex>[d]^{l'}
&
\\
F'U'A+F'U'A' 
\ar@<-0.35ex>[r]_-r
\ar@<+0.35ex>[r]^-l
&
F'U'A
\ar[r]^-{\eps_{A}}
&
A
\\
F'U'A 
\ar[u]^{\mathsf{in}}
\ar@<-1.7pt>@{->}`r[ur]_-\id[ur]
\ar@<+1.7pt>@{->}`r[ur]^-\id[ur]
}
$$

\noindent
To see that~\eqref{eq:refl coins free finite posets} is indeed a coinserter, notice first that the inequality $\eps_A\circ l \le \eps_A\circ r$ follows from $\eps_A\circ l' \le \eps_A\circ r'$ and from the 2-dimensional property of the $\Pos$-enriched coproduct.
 
Next, let $h:F'U'A \to B$ be an arrow with $h\circ l\le h\circ r$. It follows that
$$
h \circ l'\circ \eps_{A'} = h\circ l\circ \mathsf{in}\le h\circ r\circ\mathsf{in} = h\circ r'\circ \eps_{A'}
$$
and since $\eps_{A'}$ is onto, we obtain 
$$
h\circ l'\le h\circ r'
$$
But $\eps_A$ is the coinserter of the pair $l'$, $r'$, therefore there is an arrow $k:A \to B$ such that $h=k\circ\eps_A$. Since $\eps_{A}$ is onto, $k$ is unique. We have therefore shown that in~\eqref{eq:refl coins free finite posets}, the morphism $\eps_A$ is the reflexive coinserter of the pair $l$, $r$.

Using now in~\eqref{eq:refl coins free finite posets} that 
$$
F'U'A+F'U'A' \cong F'(U'A+U'A')
$$ 
we see that the finite distributive lattice $A$ can be obtained as a \emph{reflexive} coinserter of distributive lattices that are \emph{free} over finite posets. Since we have already established that the latter are in the closure of $\DL_\ff$ under reflexive coinserters, the proof is finished.
\end{proof}

\begin{remark}
Since the class of reflexive coinserters is definable by a weight, the theorem also shows that $\DL_f$ is the free cocompletion of $\DL_\ff$ by reflexive coinserters, see Proposition~\cite[Proposition~4.1]{kelly-schmidt}. Note that the only particular property of the ordered variety $\DL$ used in the proof is that $\DL$ is locally finite, that is, that finitely generated algebras are finite. 
\end{remark}

\medskip\noindent It follows from the theorem that a functor with domain $\DL_f$ is the left Kan extension of its restriction along $J'':\DL_\ff \to \DL_f$ if it preserves reflexive coinserters. Being interested in the composite functor $P'T'^\op S'J':\DL_f \to \DL$, our next step is to establish that $P':\Pos^\op\to\DL$ preserves  reflexive coinserters. We split this in several lemmas.

\begin{lemma}\label{exists}
Let $e:E\to X$ be an embedding (i.e., a monotone and order-reflecting map) of posets. Then $[e,\Two]$ has a right inverse.
\end{lemma}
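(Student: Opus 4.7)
My plan is to identify $[X,\Two]$ with the poset $P'X$ of upsets of $X$ ordered by inclusion (via $f\mapsto f^{-1}(1)$), so that under this identification the map $[e,\Two]:[X,\Two]\to[E,\Two]$ becomes the inverse-image map $U\mapsto e^{-1}(U)$ sending an upset of $X$ to its preimage, which is easily checked to be an upset of $E$. The problem then becomes: produce a monotone section of $e^{-1}(-):P'X\to P'E$.

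The obvious candidate is the up-closure of the image: define $r:P'E\to P'X$ by
\[
r(V) \;=\; \mathord{\uparrow} e(V) \;=\; \{x\in X \mid \exists v\in V.\ e(v)\le x\}.
\]
This is clearly an upset of $X$, and if $V\subseteq V'$ in $P'E$ then $r(V)\subseteq r(V')$ in $P'X$, so $r$ is monotone.

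It remains to verify that $e^{-1}(r(V))=V$ for every upset $V\subseteq E$. The inclusion $V\subseteq e^{-1}(r(V))$ is immediate from $e(v)\le e(v)$. For the reverse, suppose $v\in E$ with $e(v)\in r(V)$; then there is some $v'\in V$ with $e(v')\le e(v)$, and here is where the hypothesis that $e$ is an embedding enters crucially: order-reflection gives $v'\le v$, and since $V$ is an upset we conclude $v\in V$. Thus $[e,\Two]\circ r=\id_{[E,\Two]}$, as required. The whole argument is routine once the correspondence $[-,\Two]\cong P'(-)$ is in place; the only non-trivial use of the hypothesis is the order-reflection step in the last verification.
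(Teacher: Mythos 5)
Your proof is correct, and the section you construct is literally the same map as the one in the paper: your $r(V)=\{x\in X\mid \exists v\in V.\ e(v)\le x\}$ is exactly the left Kan extension $\exists_e$ along $e$, read through the identification $[X,\Two]\cong P'X$. The only difference is in the verification of $[e,\Two]\circ r=\id$: the paper observes that an embedding is a fully faithful $\Two$-functor and invokes the general fact that the unit of a left Kan extension along a fully faithful functor is an isomorphism, whereas you check the identity elementwise, with order-reflection doing the work in the one nontrivial inclusion. Both are fine; the paper's packaging of $\exists_e$ as a left adjoint (with the counit inequality $\exists_e\circ[e,\Two]\le\id$ as well) is what gets reused later in Lemma~\ref{BC} and Proposition~\ref{P'corefl eq} to produce a split coinserter, so it is worth keeping that formulation in mind even though your argument fully suffices for the lemma as stated.
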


\begin{proof}
Remember that any poset can be seen as a category enriched over the two-elements poset $\Two$, and any monotone map $e:E\to X$ as a $\Two$-enriched functor. 

Pre-composition with $e$ gives a monotone map (hence, a $\Two$-functor) $[e,\Two]:[X,\Two]\to [E, \Two]$ which always has a left adjoint $\exists_e:[E,\Two]\to[X, \Two]$, given by \emph{left} Kan extension along $e$ (but also a right adjoint $\forall_e:[E,\Two] \to [X, \Two]$, provided by the \emph{right} Kan extension along $e$). Explicitly, for a monotone map $f:E \to \Two$, the left adjoint acts as follows: $\exists_e(f):X \to \Two$ is the monotone map given by 
\[
\exists_e (f) (x) = 1 \quad \Longleftrightarrow \quad \exists a\in E \, . \, (f(a)= 1 \, \land \, e(a) \leq x)
\]
for any $x \in X$. 

The unit and counit of the adjunction $\exists_e \dashv [e,\Two]$ are the inequalities 
\begin{eqnarray} 
\id_{[E,\Two]} \leq [e,\Two] \circ \exists_e \label{eq:unit-adj-exists} 
\\
\exists_e \circ [e, \Two] \leq \id_{[X,\Two]} \label{eq:counit-adj-exists}
\end{eqnarray}
By hypothesis, $e$ is an embedding. This means precisely that $e$ is fully faithful as a $\Two$-enriched functor. Therefore, the unit $\,f\le [e,\Two]\circ \exists_e(f)\,$ of the left Kan extension $\exists_e(f)$ of $f:E\to\Two$ along $e$ is an isomorphism~\cite[Proposition~4.23]{kelly:enriched}. But since $\Two$ is a poset, isomorphism means equality and we obtain 
\begin{equation}\label{eq:unit-adj-equals}
\id_{[E,\Two]}=[e,\Two]\circ \exists_e. 
\end{equation}
improving~\eqref{eq:unit-adj-exists}.
\end{proof}

\medskip\noindent Remember the notion of an exact square from Definition~\ref{def:exact}. We shall give now an equivalent formulation:

\begin{lemma}[Beck-Chevalley Property]\label{BC}
The diagram~\eqref{exact-sq} exhibits an exact square if and only if $[g,\Two]\circ \exists_f=\exists_\alpha\circ [\beta, \Two]$. That is
\[
\vcenter{
\xymatrix{
E \ar[r]^{\alpha} \ar[d]_{\beta} \ar@{}[1,1]|{\swarrow}& X\ar[d]^f \\
Y\ar[r]_g & Z
}
} 
\qquad
\mbox{is exact}
\qquad  
\Longleftrightarrow
\qquad  
\vcenter{
\xymatrix{
%\ar@{}[dr]|{=}
[E,\Two] \ar[d]_{\exists_\beta}& [X,\Two] \ar[d]^{\exists _f} \ar[l]_{[\alpha,\Two]}  \\
[Y,\Two]  & [Z,\Two]\ar[l]^{[g,\Two]} 
}
}
\qquad
\mbox{commutes.}
\]
\end{lemma}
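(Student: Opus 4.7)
The plan is to verify the asserted equality of monotone maps $[X,\Two]\to[Y,\Two]$ pointwise on upsets, after first making both composites concrete. Following the setup of Lemma~\ref{exists}, for a monotone $k:A\to B$ the left Kan extension $\exists_k:[A,\Two]\to[B,\Two]$ is given by $\exists_k(h)(b)=\bigvee\{h(a)\mid k(a)\le b\}$, which in $\Two$ just says that $\exists_k(h)(b)=1$ iff there is some $a\in A$ with $k(a)\le b$ and $h(a)=1$. Specialising to $f:X\to Z$ and $\beta:E\to Y$ and using that $[g,\Two]$ and $[\alpha,\Two]$ are precomposition, one obtains, for an upset $h:X\to\Two$ and $y\in Y$,
\begin{align*}
([g,\Two]\circ\exists_f)(h)(y)&=1 \iff \exists\, x\in X \,.\, h(x)=1 \ \text{and}\ f(x)\le g(y),\\
(\exists_\beta\circ[\alpha,\Two])(h)(y)&=1 \iff \exists\, e\in E \,.\, h(\alpha(e))=1 \ \text{and}\ \beta(e)\le y.
\end{align*}

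I would then observe that one of the two inclusions is automatic from the inequality $f\circ\alpha\le g\circ\beta$ built into any (lax) commutative square: if $e\in E$ witnesses the right-hand side, then $x:=\alpha(e)$ witnesses the left-hand side, since $h(\alpha(e))=1$ and $f(\alpha(e))\le g(\beta(e))\le g(y)$ by monotonicity of $g$. Hence $\exists_\beta\circ[\alpha,\Two]\le[g,\Two]\circ\exists_f$ holds unconditionally, so the Beck--Chevalley equation is equivalent to the converse inequality $[g,\Two]\circ\exists_f\le\exists_\beta\circ[\alpha,\Two]$ quantified over all upsets $h$.

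Finally, I would identify this converse inequality with exactness of the square. For the forward direction, assume the square is exact and suppose $h(x)=1$ and $f(x)\le g(y)$; by Definition~\ref{def:exact} there is $w\in E$ with $x\le\alpha(w)$ and $\beta(w)\le y$, and upward-closure of $h$ gives $h(\alpha(w))=1$, so $e:=w$ is the required witness. For the reverse direction, assume the converse inequality and let $x\in X$, $y\in Y$ satisfy $f(x)\le g(y)$; apply the inequality to the principal upset $h=\{x'\in X\mid x\le x'\}$, for which $h(x)=1$. The resulting witness $e\in E$ satisfies $h(\alpha(e))=1$, i.e.\ $x\le\alpha(e)$, and $\beta(e)\le y$, which is exactly the exactness clause.

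The main obstacle is purely bookkeeping: keeping the variances, the handedness of the mate correspondence, and the left/right adjoints $\exists_k\dashv[k,\Two]$ straight. The explicit pointwise computation in $\Two$ is deliberately chosen to sidestep the abstract mate formalism, and in the converse direction the reduction to the principal upset at a single $x$ turns the statement of exactness directly into an instance of the Beck--Chevalley equation; the same argument works verbatim in $\Pre$.
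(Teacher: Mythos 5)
Your proof is correct and is precisely the ``direct computation'' that the paper's proof alludes to without spelling out: unpacking $\exists_f$ and $\exists_\beta$ pointwise in $\Two$, noting that one inequality is forced by the lax square, and testing the other against principal upsets to recover exactness. No gaps; the use of monotonicity of $h$ in the forward direction and the choice $h=\mathord\uparrow x$ in the converse are exactly the right moves.
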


\begin{proof}It follows easily by direct computation.
\end{proof}

\medskip\noindent As in the proof of subsequent Proposition~\ref{P'corefl eq} we shall encounter split coinserters -- the ordered analogue of split coequalizers~\cite[\textsection~VI.6]{maclane:cwm}, we provide below the precise definition:

\begin{definition}
In a $\Pos$-category, the diagram below is called a \emph{split coinserter} if it satisfies the following equations and inequations:
\begin{equation}\label{eq:split-coins}
\vcenter{
\xymatrix@C=40pt{
A
\ar@<0.4ex>[r]^{d^0} 
\ar@<-0.4ex>[r]_{d^1} 
\ar@{<-}`u[r]`[r]^{t}
&
B \ar[r]^-{c}
\ar@{<-}`d[r]`[r]_{s}
&
C
}
}
\qquad \qquad \qquad 
\begin{cases}
c \circ d^0 \le c \circ d^1
\\
c \circ s=\id_{C}
\\
d^0 \circ t=\id_{B}
\\
d^1 \circ t = s \circ c \le \id_{B}
\end{cases}
\end{equation}
\end{definition}

\begin{proposition}\label{prop:split-coinserter}
\mbox{}\hfill
\begin{enumerate}
\item A split coinserter is a coinserter.
\item Split coinserters are absolute: they are preserved by all \emph{locally monotone} functors.
\end{enumerate}
\end{proposition}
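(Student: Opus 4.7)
The plan is to verify that the split coinserter data directly witnesses the universal property of a coinserter, and then observe that every clause in the definition is equational/inequational, hence preserved by any locally monotone functor.

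For part (1), I would check both the 1- and 2-dimensional aspects of the universal property of $\mathsf{coins}(d^0,d^1)$. Given $q:B\to Z$ with $q \circ d^0 \le q \circ d^1$, the only candidate mediator is $h=q\circ s:C\to Z$, since composing any solution $h$ of $h\circ c = q$ on the right with $s$ forces $h = q\circ s$ (using $c\circ s=\id_C$); this already gives uniqueness. To verify existence, I would compute $h\circ c = q\circ s\circ c$ and use the two inequalities supplied by the split data: on one hand $s\circ c = d^1\circ t \le \id_B$ yields $q \circ s\circ c \le q$; on the other hand, using $d^0\circ t = \id_B$ and $q\circ d^0 \le q\circ d^1$, one obtains $q = q\circ d^0\circ t \le q\circ d^1\circ t = q\circ s\circ c$. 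Anti-symmetry of the order on the homset $\mathscr K(B,Z)$ then gives $h\circ c = q$. For the 2-dimensional part, if $q\le q'$ (both with $q\circ d^0\le q\circ d^1$ and $q'\circ d^0\le q'\circ d^1$), then the induced mediators $q\circ s$ and $q'\circ s$ satisfy the required inequality by local monotonicity of composition.

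For part (2), I would simply observe that the conditions~\eqref{eq:split-coins} defining a split coinserter are a finite list of equations and inequalities between composites of the given arrows, together with the identities $\id_B$ and $\id_C$. Any locally monotone functor $F$ preserves composition, identities, and the order on homsets, so applying $F$ sends the split coinserter data in $\mathscr K$ to split coinserter data for $Fc$ with splittings $Fs$ and $Ft$. By part (1), the image is again a coinserter, proving absoluteness.

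I do not anticipate a genuine obstacle here; the only subtle point is noticing that the apparently one-sided inequality $s\circ c \le \id_B$, combined with the equations $d^0\circ t = \id_B$ and $d^1\circ t = s\circ c$, \emph{forces} the reverse inequality $q \le q\circ s\circ c$ precisely when $q$ is a candidate factorisation, so that equality $q = q\circ s\circ c$ is recovered without having to assume $s\circ c = \id_B$. This is why the asymmetric definition in~\eqref{eq:split-coins} is still strong enough to make the split coinserter absolute.
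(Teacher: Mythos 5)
Your proof is correct and follows essentially the same route as the paper: the mediator is $q\circ s$, the equality $q\circ s\circ c = q$ is obtained by playing the inequality $s\circ c\le\id_B$ against $q = q\circ d^0\circ t\le q\circ d^1\circ t = q\circ s\circ c$, and uniqueness comes from $c$ being split epi. You additionally spell out the 2-dimensional aspect of the universal property and the reason absoluteness holds (the defining data is purely (in)equational), both of which the paper leaves implicit, but the argument is the same.
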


\begin{proof}
The second statement is immediate. To show the first one, let $h:B\to D$ be an arrow such that $h\circ d^0 \le h\circ d^1$. Define $k:C\to D$ by $k=h\circ s$. We have 
$$
k\circ c = h \circ s \circ c \le h
$$ 
and 
$$
k\circ c = h \circ s \circ c = h \circ d^1 \circ t \ge h \circ d^0 \circ t =  h
$$
Therefore $k\circ c = h$. And $k$ is unique with this property since $c$ is (split) epi.
\end{proof}

\medskip\noindent We have now all ingredients to come back to the question of $P'$ preserving reflexive coinserters.

\begin{proposition}\label{P'corefl eq}
The functor $P':\Pos^\op\to \DL$ preserves reflexive coinserters. 
\end{proposition}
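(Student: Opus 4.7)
To prove that $P':\Pos^\op\to\DL$ preserves reflexive coinserters, I would first unravel the structure: a reflexive coinserter in $\Pos^\op$ corresponds to a coreflexive inserter $e:E\to X$ of a pair $f,g:X\to Y$ of monotone maps in $\Pos$ equipped with a common left inverse $i:Y\to X$ (so $i\circ f=i\circ g=\id_X$), where $E$ is the subposet $\{x\in X:f(x)\le g(x)\}$. The goal is to show that $P'e:P'X\to P'E$ is the reflexive coinserter in $\DL$ of the pair $P'f,P'g:P'Y\to P'X$, with common right inverse $P'i$ (noting $P'f\circ P'i=P'g\circ P'i=\id$ by functoriality).

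The strategy is to exhibit a split coinserter structure on $(P'e,P'f,P'g)$ in the sense of~\eqref{eq:split-coins}. First, I would observe that $e,f,g$ are all embeddings in $\Pos$: $e$ as a subposet inclusion, and $f,g$ as split monomorphisms via $i$. By Lemma~\ref{exists}, there are then monotone right inverses $\exists_e:P'E\to P'X$ and $\exists_f:P'X\to P'Y$ satisfying $P'e\circ\exists_e=\id$ and $P'f\circ\exists_f=\id$, together with the counit inequalities $\exists_e\circ P'e\le\id$ and $\exists_f\circ P'f\le\id$ coming from the adjunctions.

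The central step will be to verify the exactness of the square
\[
\xymatrix{
E \ar[r]^e \ar[d]_e \ar@{}[1,1]|{\swarrow} & X \ar[d]^f \\
X \ar[r]_g & Y
}
\]
whose 2-cell is $f\circ e\le g\circ e$. Given $x_1,x_2\in X$ with $f(x_1)\le g(x_2)$, the left inverse $i$ yields $x_1\le x_2$, and the combinatorial task is to locate an intermediate $w\in E$ with $x_1\le e(w)\le x_2$, exploiting the interplay between $f,g$ and $i$. By Lemma~\ref{BC}, this exactness translates into the Beck--Chevalley identity $P'g\circ\exists_f=\exists_e\circ P'e$. Combined with the identities above and the inequality $P'e\circ P'f\le P'e\circ P'g$ (from $f\circ e\le g\circ e$ via local monotonicity of $P'$), this assembles into a split coinserter with $c=P'e$, $s=\exists_e$, $d^0=P'f$, $d^1=P'g$, $t=\exists_f$: the four axioms follow, as $c\circ s=\id$ and $d^0\circ t=\id$ come from Lemma~\ref{exists}, while $d^1\circ t=s\circ c\le\id$ combines Beck--Chevalley with the counit. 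By Proposition~\ref{prop:split-coinserter}, split coinserters are absolute, exhibiting $P'e$ as the required reflexive coinserter of $(P'f,P'g)$ in $\Pos$, and hence also in $\DL$ because the forgetful $U':\DL\to\Pos$ creates reflexive coinserters as sifted colimits of the ordered variety $\DL$. The hardest part will be verifying exactness, where the delicate combinatorics of the coreflexive structure --- the interplay between the inserter inclusion $e$ and the common left inverse $i$ --- are needed to produce the witness $w$.
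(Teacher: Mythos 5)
Your reduction is exactly the one the paper uses: pass along the monadic $U'$ to $[-,\Two]:\Pos^\op\to\Pos$, build the candidate splitting from $\exists_e$ and $\exists_f$ via Lemma~\ref{exists} (using that $e$ and $f$ are embeddings because of the common retraction $i$), convert exactness of the square with legs $e,e,f,g$ into the Beck--Chevalley identity $[g,\Two]\circ\exists_f=\exists_e\circ[e,\Two]$ via Lemma~\ref{BC}, and conclude by absoluteness of split coinserters (Proposition~\ref{prop:split-coinserter}) together with $U'$ creating reflexive coinserters. Every one of these steps matches the paper's proof.

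The gap is precisely the step you flag as the hardest: you never produce the witness $w$, and in fact it need not exist, so exactness of that square does not follow from the coreflexive-inserter data alone. Take $X=\{x_1<x_2\}$ and let $Y$ be the four-element poset $\{a_1,a_2,b_1,b_2\}$ whose only nontrivial relations are $a_1<a_2$, $b_1<b_2$ and $a_1<b_2$; put $f(x_j)=a_j$, $g(x_j)=b_j$ and $i(a_j)=i(b_j)=x_j$. Then $i$ is monotone with $i\circ f=i\circ g=\id_X$, yet $\mathsf{ins}(f,g)=\emptyset$ while $f(x_1)=a_1\le b_2=g(x_2)$, so no $w$ with $x_1\le e(w)\le x_2$ exists and the square is not exact; correspondingly $[g,\Two]\circ\exists_f\neq\exists_e\circ[e,\Two]$ (evaluate both sides at the top element of $[X,\Two]$ and at $x_2$). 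To be fair, the paper's own proof asserts the exactness of this square with no more justification than you offer, so you have reproduced its argument faithfully --- but as written the argument does not close, and ``the interplay between $e$ and $i$'' cannot supply the missing witness. Note that the conclusion of the proposition survives in this example (the coinserter of $[f,\Two]$, $[g,\Two]$ collapses $[X,\Two]$ to a point, which is indeed $[\emptyset,\Two]$), so what is needed is either a direct verification of the universal property of $[e,\Two]$ not routed through exactness of this particular square, or a restriction to the coreflexive inserters that actually arise in Theorem~\ref{thm:strong}.
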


\begin{proof}
Notice that $U'P'=[-,\Two]$ and that $U'$ is monadic. Since $\DL$ is an ordered variety, it has all (sifted) colimits, in particular reflexive coinserters, and $U'$ creates them. Thus it is enough to show that $[-,\Two]:\Pos^\op \to \Pos$ preserves reflexive coinserters. 

But reflexive coinserters in $\Pos^\op$ are coreflexive inserters in $\Pos$; consider therefore two monotone maps with common left inverse in $\Pos$
\begin{equation}\label{descent}
\xymatrix{
X 
\ar@<0.4ex>[r]^{f} 
\ar@<-0.4ex>[r]_{g} 
\ar@{<-}`u[r]`[r]^i
&
Y 
}
\end{equation}
The inserter of the above data is realized as the poset $\mathsf{ins}(f,g)=\{x\in X \mid f(x)\leq g(x)\}$ with the order inherited from $X$, together with the inclusion map $\mathsf{ins}(f,g)\overset{e}{\to} X$. In particular, the diagram below is an exact square: 
\[
\xymatrix{\mathsf{ins}(f,g) \ar[r]^-e \ar[d]_e \ar@{}[1,1]|{\swarrow} & X\ar[d]^{f} \\ X \ar[r]_-{g} & Y}
\]
By Lemma \ref{BC}, we obtain $[g,\Two]\circ \exists_{f}=\exists_e\circ [e,\Two]$. Additionally, as both $e$ and $f$ are embeddings (recall that the inserter pair $f,g$ was assumed to have a common left inverse), $[e,\Two]\circ \exists_e=\id_{[\mathsf{ins}(f,g), \Two]}$ and $[f,\Two]\circ \exists_{f}=\id_{[X,\Two]}$ by Lemma \ref{exists}. 

That is, by applying $[-,\Two]$ to the diagram~\eqref{descent} augmented by $\mathsf{ins}(f,g)\overset{e}{\to} X$ and $\exists_e,\exists_{f}$, 
we obtain the split coinserter
\begin{equation}\label{eq:split-refl-coins}
\vcenter{
\xymatrix@C=40pt{
[Y,\Two]
\ar@<0.4ex>[r]^{[f,\Two]} 
\ar@<-0.4ex>[r]_{[g,\Two]} 
\ar@{<-}`u[r]`[r]^{\exists_f}
&
[X,\Two] \ar[r]^-{[e,\Two]}
\ar@{<-}`d[r]`[r]_{\exists_e}
&
[\mathsf{ins}(f,g),\Two]
}
}
\quad \ \
\begin{cases}
[e,\Two]\circ [f,\Two] \le [e,\Two]\circ [g,\Two]
\\
[e,\Two]\circ \exists_e=\id_{[\mathsf{ins}(f,g),\Two]}
\\
[f,\Two]\circ \exists_{f}=\id_{[X,\Two]}
\\
[g,\Two]\circ \exists_{f}=\exists_e\circ [e,\Two] \le \id_{[X,\Two]}
\end{cases}
\end{equation}

\medskip\noindent
where the first inequation is due to $[-,\Two]$ being a locally monotone functor, the next two equations correspond to~\eqref{eq:unit-adj-equals}, the last equation follows from Lemma~\ref{BC}, while the last inequality is due to~\eqref{eq:counit-adj-exists}. Use then Proposition~\ref{prop:split-coinserter} to conclude that $[-,\Two]$ maps coreflexive inserters to (split and reflexive) coinserters. 
\end{proof}

\medskip\noindent The following theorem is the technical result around which this section revolves.

\begin{theorem}\label{thm:strong}
Let $\bar L=\Lan_{J'}(P'T'^\op S'J')$ and $\mathbf L=\Lan_{\mathbf J'}(P'T'^\op S'\mathbf J')$. If $T'$ sends coreflexive inserters of finite posets into coreflexive inserters, then $\bar L'\cong \mathbf L'$.
\end{theorem}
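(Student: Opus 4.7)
The plan is to reduce the claim to a preservation property for $P'T'^{\op}S'J':\DL_f\to\DL$. Observe that $\mathbf J' = J'\circ J''$, so iteration of $\Pos$-enriched left Kan extensions \cite[Theorem~4.47]{kelly:enriched} gives
\[
\mathbf L' \;=\; \Lan_{\mathbf J'}(P'T'^{\op}S'\mathbf J') \;\cong\; \Lan_{J'}\bigl(\Lan_{J''}(P'T'^{\op}S'\mathbf J')\bigr).
\]
Since $\bar L' = \Lan_{J'}(P'T'^{\op}S'J')$, it will suffice to show that $P'T'^{\op}S'J'$ is itself the left Kan extension of its own restriction along $J''$, i.e.\ that $P'T'^{\op}S'J' \cong \Lan_{J''}(P'T'^{\op}S'\mathbf J')$.

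By Theorem~\ref{thm:DLffisdense}, $J''$ is dense with density presentation given by reflexive coinserters, so by \cite[Theorem~5.29]{kelly:enriched} the required isomorphism will hold if and only if $P'T'^{\op}S'J'$ preserves reflexive coinserters in $\DL_f$. The plan is to verify this by chasing the composite one functor at a time. First, the restriction of the dual adjunction $S'\dashv P'$ to finite objects is a dual equivalence $S'_f:\DL_f\simeq\Pos_f^{\op}$ of $\Pos$-categories, so $S'J'$ will send a reflexive coinserter in $\DL_f$ to a reflexive coinserter in $\Pos_f^{\op}$, which---unpacking the opposite---is exactly a coreflexive inserter of finite posets in $\Pos_f$. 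Next, this is precisely the situation in which the hypothesis on $T'$ applies: such coreflexive inserters are preserved by $T'$. Finally, Proposition~\ref{P'corefl eq} asserts that $P':\Pos^{\op}\to\DL$ preserves reflexive coinserters; equivalently, it sends the coreflexive inserter produced by the previous step to a reflexive coinserter in $\DL$. Chaining these three preservation results then yields the required preservation.

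The one delicate point is the dualisation between $\DL_f$ and $\Pos_f$: a reflexive coinserter in $\DL_f$ translates, under the dual equivalence, into a \emph{coreflexive inserter} (not a reflexive coinserter) in $\Pos_f$, so that the hypothesis on $T'$ can be invoked; and then Proposition~\ref{P'corefl eq} performs the reverse translation back to a reflexive coinserter in $\DL$. Once this bookkeeping is carried out, the argument reduces to the composition of three standard preservation facts together with one application of the density result for $J''$.
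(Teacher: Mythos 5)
Your proposal is correct and follows essentially the same route as the paper's proof: reduce via iterated left Kan extensions to showing that $P'T'^{\op}S'J'$ preserves reflexive coinserters of finite distributive lattices, then combine Theorem~\ref{thm:DLffisdense} with \cite[Theorem~5.29]{kelly:enriched} and chase the composite functor by functor. The only cosmetic difference is that you invoke the finite duality $\DL_f\simeq\Pos_f^{\op}$ where the paper uses that $S'$ is a left adjoint (hence sends reflexive coinserters to coreflexive inserters) together with the fact that it takes finite lattices to finite posets; both justifications are sound.
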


\begin{proof}
Taking up the reasoning preceding~\eqref{eq:P'T'S'2}, we need to show that $P'T'^\op S'J'$ is the left Kan extension of its restriction along $J''$.  We know that $P'T'^\op S'J'$ preserves reflexive coinserters of finite distributive lattices. Indeed, $J'$ is a completion by filtered colimits and preserves all finite colimits~\cite{adamekrosicky}; $S'$ as a left adjoint preserves all colimits (hence it maps reflexive coinserters to coreflexive inserters) and maps finite distributive lattices to finite posets; $T'^\op$ preserves coreflexive inserters of finite posets by hypothesis, and $P'$ sends them to reflexive coinserters, due to Proposition~\ref{P'corefl eq}. Our claim now follows from Theorem~\ref{thm:DLffisdense} and  \cite[Theorem~5.29]{kelly:enriched}.
\end{proof}

\medskip\noindent As a result about modal logics, the theorem can be reformulated as follows.

\begin{corollary}\label{cor:strong}
Let $T$ be an endofunctor on $\Set$ and $T'$ a $\Pos$-extension of $T$ which \emph{preserves coreflexive inserters}. Then $(\bar L', \bar{\delta}')$ and $(\mathbf L', \boldsymbol \delta ')$ coincide. In particular, it follows from Proposition~\ref{prop:canonical_finitary_fragment} that $\mathbf L'$ is \emph{the maximal positive fragment} of $\mathbf L$. 
\end{corollary}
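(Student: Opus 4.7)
The plan is to obtain the corollary as a straightforward combination of Theorem~\ref{thm:strong} and Proposition~\ref{prop:canonical_finitary_fragment}. First I would observe that the hypothesis ``$T'$ preserves coreflexive inserters'' is (strictly) stronger than the hypothesis of Theorem~\ref{thm:strong}, which asks only for preservation of coreflexive inserters of finite posets. Hence Theorem~\ref{thm:strong} applies and yields a natural isomorphism $\mathbf L' \cong \bar L'$.

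Next I would check that under this isomorphism the two semantics $\boldsymbol\delta'$ and $\bar\delta'$ agree. Both are adjoint transposes (under $S'\dashv P'$) of natural transformations into $P'T'^\op S'$, namely the units of the left Kan extensions defining $\mathbf L'=\Lan_{\mathbf J'}(P'T'^\op S'\mathbf J')$ and $\bar L'=\Lan_{J'}(P'T'^\op S'J')$. Since $\mathbf J'=J'\circ J''$, the universal property of left Kan extensions gives a canonical comparison $\mathbf L'\to\bar L'$ compatible with the units into $P'T'^\op S'$; and the content of Theorem~\ref{thm:strong} is precisely that, under the inserter-preservation hypothesis, this comparison is an isomorphism. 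A routine diagram chase then identifies $\boldsymbol\delta'$ with $\bar\delta'$, so $(\mathbf L',\boldsymbol\delta')$ and $(\bar L',\bar\delta')$ coincide.

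Finally, Proposition~\ref{prop:canonical_finitary_fragment} supplies an isomorphism $\bar\beta:\bar L'W\to W\mathbf L$ exhibiting $(\bar L',\bar\delta')$ as the maximal positive fragment of $(\mathbf L,\boldsymbol\delta)$. Whiskering the isomorphism $\mathbf L'\cong\bar L'$ with $W$ and composing with $\bar\beta$ yields the required isomorphism $\boldsymbol\beta:\mathbf L'W\to W\mathbf L$, and the compatibility condition in Definition~\ref{def:pos-frag} follows from the identification of the semantics carried out in the previous paragraph. The main obstacle, if any, is purely bookkeeping: verifying that the canonical isomorphism $\mathbf L'\cong\bar L'$ intertwines the two natural transformations into $P'T'^\op S'$ and hence commutes with the transposes defining $\boldsymbol\delta'$ and $\bar\delta'$. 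This is immediate from the factorisation $\mathbf J'=J'J''$ and the 2-functoriality of left Kan extension.
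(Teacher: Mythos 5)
Your proposal is correct and follows essentially the same route as the paper, which presents this corollary as a direct reformulation of Theorem~\ref{thm:strong} combined with Proposition~\ref{prop:canonical_finitary_fragment} and gives no further argument. Your extra bookkeeping --- checking that the canonical comparison $\mathbf L'\to\bar L'$ arising from $\mathbf J'=J'J''$ intertwines the units into $P'T'^\op S'$ and hence identifies $\boldsymbol\delta'$ with $\bar\delta'$ --- is exactly the detail the paper leaves implicit, and it is handled correctly.
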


\medskip\noindent The next example shows what can go wrong in case that $T'$ does not preserve coreflexive inserters.

\begin{example}
For $T=\Id$, the corresponding finitary logics is $\mathbf L=\Id$ on $\BA$, with trivial semantics $\boldsymbol \delta:LP\to PT^\op$. It was noticed in Remark~\ref{rem:posetific}\eqref{DC}
that the identity functor also admits as extension the discrete connected components functor $T'=D C$. But the latter preserves neither embeddings, nor coreflexive inserters. The corresponding logic $\mathbf L'$ for $T'$ is given by the constant functor to the distributive lattice $\Two$. Thus the natural transformation $\beta: \mathbf L'W\to W\mathbf L$ from Definition~\ref{def:pos-frag} fails to be an isomorphism (it is just the unique morphism from the initial object).

Whereas the associated `strongly finitary' logic $\mathbf L'$ of $T'=DC$ is just the logic of the constant functor $\Two$ (i.e., plain positive propositional logic), the associated `finitary logic' $\bar L'$ is given by the functor $WK$ on $\DL$. This can be seen as follows:  on finite distributive lattices, $\bar L'$ is $P'D^\op C^\op S' \cong WPC^\op S' \cong W KP' S'\cong WK$. As $WK$ is finitary (Example~\ref{ex:not strongly finitary}), it coincides with $\bar L'$ on all distributive lattices, not just on the finite ones.  \qed
\end{example}

%========================================================%

\medskip\noindent
The next lemma shows that for a locally monotone functor on $\Pos$, preservation of exact squares entails the condition needed in Theorem~\ref{thm:strong}, namely the 
preservation of coreflexive inserters:

\begin{lemma}%\label{ex=>emb}
If $T'$ is a locally monotone endofunctor on $\Pos$ which preserves exact squares, then it preserves embeddings and coreflexive inserters.
\end{lemma}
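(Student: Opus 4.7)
The proof divides naturally into two claims, which I would treat in order.

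For the preservation of embeddings, my plan is to use the exact-square characterisation of order-reflecting maps: a monotone $e\colon E\to X$ is an embedding precisely when the square
\[
\xymatrix@R=12pt@C=12pt{E\ar[r]^{\id_E}\ar[d]_{\id_E}\ar@{}[1,1]|{\swarrow} & E\ar[d]^e\\ E\ar[r]_e & X}
\]
is exact. Indeed, unpacking Definition~\ref{def:exact} for this particular square reduces exactness to the statement that $e(x_1)\le e(x_2)$ implies the existence of $w\in E$ with $x_1\le w\le x_2$, which is equivalent to $e$ being order-reflecting. Since $T'$ preserves identities (as a functor) and exact squares (by hypothesis), applying $T'$ yields the analogous exact square for $T'e$, showing that $T'e$ is an embedding.

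For coreflexive inserters, let $(f,g)\colon X\rightrightarrows Y$ be coreflexive with common left inverse $i\colon Y\to X$ (so $if=ig=\id_X$), and let $e\colon E\hookrightarrow X$ be the inserter, i.e.\ $E=\{x\in X\mid f(x)\le g(x)\}$. First I would note that, by functoriality, $T'i$ is a common left inverse of $(T'f,T'g)$, making this pair coreflexive; moreover, $f,g$ are split monomorphisms, hence embeddings, and the first part of the proof yields that $T'f,T'g$ are embeddings. Local monotonicity gives $T'f\circ T'e\le T'g\circ T'e$, so $T'e$ factors uniquely as $T'e=e'\circ\phi$ through the inclusion $e'\colon E'\hookrightarrow T'X$ of the inserter $E'=\{x\in T'X\mid T'f(x)\le T'g(x)\}$ of $(T'f,T'g)$. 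Because $T'e$ (by the first part) and $e'$ (as a subposet inclusion) are both embeddings, $\phi\colon T'E\to E'$ is also an embedding.

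It then remains to establish that $\phi$ is surjective. For this I would invoke the exact square in $\Pos$ associated with a coreflexive inserter,
\[
\xymatrix@R=12pt@C=12pt{E\ar[r]^e\ar[d]_e\ar@{}[1,1]|{\swarrow} & X\ar[d]^f\\ X\ar[r]_g & Y}
\]
as used in the proof of Proposition~\ref{P'corefl eq}. Since $T'$ preserves exact squares, the image of this square under $T'$ is exact, and instantiating the exactness condition with $x_1=x_2=x\in T'X$ satisfying $T'f(x)\le T'g(x)$ produces some $w\in T'E$ with $x\le T'e(w)\le x$, hence $T'e(w)=x$. This proves surjectivity of $\phi$, so $\phi$ is an isomorphism and $T'e$ is the inserter of $T'f$ and $T'g$. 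The main obstacle I anticipate is precisely the justification that the coreflexive-inserter square is exact in the first place: this is the feature of coreflexivity that lets one descend from an inequality $f(x_1)\le g(x_2)$ in $Y$, via the common left inverse $i$, to a witness $w\in E$ with $x_1\le w\le x_2$, and it is this delicate point on which the second half of the argument rests.
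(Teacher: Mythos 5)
Your argument follows the paper's own proof very closely: the first half uses exactly the same exact square (two identities against $e$) to show that $T'$ preserves embeddings, and the second half rests on the same exact square $f\circ e\le g\circ e$ formed by the inserter inclusion. Your packaging of the second half --- factor $T'e$ through the concrete inserter of $(T'f,T'g)$ and show that the comparison map is a surjective embedding, hence an isomorphism --- is a clean equivalent of the paper's direct verification of the universal property, and your extraction of a witness $w$ with $T'e(w)=x$ from exactness of the $T'$-image of the square is precisely the paper's step.

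The one point you single out as delicate is, however, not settled by the mechanism you sketch. Applying the common retraction $i$ to $f(x_1)\le g(x_2)$ yields only $x_1\le x_2$; it does not produce an element of $\mathsf{ins}(f,g)=\{x\mid f(x)\le g(x)\}$ lying between $x_1$ and $x_2$, and for a general coreflexive pair no such element need exist. For instance, let $X=\{x_1<x_2\}$ and let $Y=\{p,q,r,s\}$ carry the order generated by $p\le q$, $p\le s$, $r\le s$; put $f(x_1)=p$, $f(x_2)=q$, $g(x_1)=r$, $g(x_2)=s$, and $i(p)=i(r)=x_1$, $i(q)=i(s)=x_2$. Then $(f,g)$ is coreflexive with retraction $i$, but $\mathsf{ins}(f,g)=\emptyset$ while $f(x_1)\le g(x_2)$, so the inserter square is not exact. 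The paper is in the same position --- it asserts this exactness in the proof of Proposition~\ref{P'corefl eq} without argument, and your proof inherits that assertion by citation --- but since the entire second half of your argument hinges on it, and the justification you offer for it does not go through, this is a genuine gap: closing it requires either restricting the class of coreflexive pairs considered or a different argument for the surjectivity of your comparison map $\phi$.
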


\begin{proof}
The first assertion follows from the observation~\cite{Guitart80} that each embedding $e:X\to Y$ can be realized as an exact square, namely
\[
\xymatrix{X\ar[r]^{\id} \ar[d]_{\id} \ar@{}[1,1]|{\swarrow}& X\ar[d]^e \\
X\ar[r]_-e & Y}
\]
For the second one, let
\[
\xymatrix{\mathsf{ins}(f,g) \ar[r]^-{e} & X \ar@<0.5ex>[r]^f \ar@<-0.5ex>[r]_g \ar@{<-}`u[r]`[r]^{i} & Y 
}
\]
be a coreflexive inserter. In particular, 
$$\xymatrix{\mathsf{ins}(f,g) \ar@{} [1,1]|{\swarrow} \ar[r]^-e \ar[d]_e & X \ar[d]^f \\
X\ar[r]_-g & Y}$$
is an exact square as remarked in the proof of Proposition~\ref{P'corefl eq}, thus $T'$ maps it to the exact square 
\[
\xymatrix@C=35pt{
T'\mathsf{ins}(f,g) \ar@{} [1,1]|{\swarrow} \ar[r]^-{T'e} \ar[d]_{T'e} & T'X \ar[d]^{T'f} \\
T'X \ar[r]_-{T'g} & T'Y}
\]
Let now $u:U\to T'X$ a monotone map such that $T'f\circ u\leq T'g\circ u$. 
For each $x\in U$, $T'f ( u (x) ) \leq T'g ( u(x) )$, thus there is some $w\in T'\mathsf{ins}(f,g)$ with $u(x)\leq T'e(w)$ and $T'e(w)\leq u(x)$, that is, $u(x)=T'e(w)$. 
As $T'e$ is an embedding, such element $w$ is uniquely determined. Moreover, the assignment $x\mapsto w$ is monotone, as if $x_1\leq x_2$, then $T'e(w_1)=u(x_1)\leq u(x_2)=T'e(w_2)$ and $T'e$ is again an embedding as shown earlier, hence $w_1\leq w_2$. This covers the 1-dimensional aspect of inserters. For the rest, use one more time that $T'e$ is an embedding.
\end{proof}

%========================================================%

\medskip\noindent As a consequence of all the results of this section and of Theorem~\ref{thm:wpb_exsq}, we obtain our main theorem on positive modal logic.

\begin{theorem}\label{mainthm}
Let $T:\Set\to\Set$ be a weak-pullback preserving functor and $T':\Pos\to\Pos$ its posetification. 
Let $(\mathbf L,\boldsymbol\delta)$ and $(\mathbf L',\boldsymbol \delta')$ be the associated logics of $T$ and $T'$, that is $\mathbf L=\Lan_{\mathbf J}(PT^\op S\mathbf J)$ and $\mathbf L'=\Lan_{\mathbf J'}(P'T'^\op S'\mathbf J')$. 
Then $(\mathbf L',\boldsymbol \delta')$ is the maximal 
positive fragment of $(\mathbf L,\boldsymbol\delta)$.
\end{theorem}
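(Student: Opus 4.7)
The plan is simply to chain together the four preparatory results from this section. Starting from the hypothesis that $T$ preserves weak pullbacks, Theorem~\ref{thm:wpb_exsq} hands me the corresponding enriched property of the posetification: $T'$ preserves exact squares. Since $T'$ is locally monotone, the lemma stated just before the theorem to be proved applies and gives preservation of coreflexive inserters (in fact also of embeddings, which is the key ingredient used in the proof of Proposition~\ref{P'corefl eq}).

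With coreflexive inserters preserved by $T'$, the hypothesis of Corollary~\ref{cor:strong} (equivalently Theorem~\ref{thm:strong}) is met, and I conclude that the two logics obtained from $T'$ by left Kan extending $P'T'^\op S'$ from, respectively, finitely presentable and discretely finitely generated free distributive lattices agree: $(\bar L',\bar\delta') \cong (\mathbf L',\boldsymbol\delta')$. In other words, the `strongly finitary' logic of $T'$ coincides with its finitary logic.

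It remains to transfer this to the Boolean side. By Remark~\ref{rem:fpL=sfpL}, every finitely presentable non-trivial Boolean algebra is a retract of a finitely generated free algebra, so on $\BA$ one has $\mathbf L \cong \bar L$. Proposition~\ref{prop:canonical_finitary_fragment} supplies a natural isomorphism $\bar\beta : \bar L' W \to W\bar L$ making the defining diagram~\eqref{eq:fragment} commute. Composing $\bar\beta$ with the isomorphisms $\mathbf L' \cong \bar L'$ on the left and $W\bar L \cong W\mathbf L$ on the right yields the desired natural isomorphism $\boldsymbol\beta : \mathbf L' W \to W\mathbf L$, which still satisfies the coherence condition of Definition~\ref{def:pos-frag} since each factor does. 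Hence $(\mathbf L',\boldsymbol\delta')$ is the maximal positive fragment of $(\mathbf L,\boldsymbol\delta)$.

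Because every non-trivial step has been done in advance (the real work being concentrated in Theorem~\ref{thm:wpb_exsq}, Proposition~\ref{P'corefl eq}, and Theorem~\ref{thm:strong}), there is no remaining obstacle: the proof is a direct assembly. The only point that deserves a brief verification, and which I would spell out explicitly, is that the coherence condition in~\eqref{eq:fragment} is preserved under the composition of the three isomorphisms above; this is routine diagram-chasing using the naturality of the units of the relevant left Kan extensions and the fact that $(D^\op,W)$ is a morphism of adjunctions as recorded in~\eqref{eq:morph_of_adj}.
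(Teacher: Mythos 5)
Your proof is correct and follows essentially the same route as the paper, which obtains Theorem~\ref{mainthm} precisely by assembling Theorem~\ref{thm:wpb_exsq}, the (unlabelled) lemma showing that exact-square preservation implies preservation of coreflexive inserters, Corollary~\ref{cor:strong} (via Theorem~\ref{thm:strong}), and Proposition~\ref{prop:canonical_finitary_fragment} together with Remark~\ref{rem:fpL=sfpL}. There is nothing to add.
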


\begin{corollary}\label{cor:mainthm}
Under the hypotheses of Theorem~\ref{mainthm}, the 
adjunction $W\dashv K:\DL\to \BA$ lifts to an adjunction 
$\widetilde W \vdash \widetilde K:\Alg(\mathbf L') \to \Alg(\mathbf L)$.
\end{corollary}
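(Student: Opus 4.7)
The plan is to use the natural isomorphism $\beta:\mathbf L'W\xrightarrow{\cong}W\mathbf L$ provided by Theorem~\ref{mainthm} to lift both $W$ and $K$ to the respective categories of algebras, and then to verify that the unit and counit of $W\dashv K$ survive as algebra homomorphisms. The underlying principle is entirely formal: whenever a compatibility datum $\lambda:T'F\Rightarrow FT$ for an adjunction $F\dashv G$ happens to be \emph{invertible}, the adjunction automatically lifts to $\Alg(T)$ and $\Alg(T')$.

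First, I would define $\widetilde W:\Alg(\mathbf L)\to\Alg(\mathbf L')$ on objects by
\[
\widetilde W(A,\alpha:\mathbf L A\to A)\ =\ \bigl(WA,\ W\alpha\circ\beta_A:\mathbf L'WA\to WA\bigr)
\]
and on morphisms by $\widetilde W(f)=Wf$. Functoriality, and the fact that $Wf$ is indeed an $\mathbf L'$-algebra homomorphism, are immediate from the naturality of $\beta$.

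Second, let $\eta:\Id_{\BA}\to KW$ and $\eps:WK\to\Id_{\DL}$ denote the unit and counit of $W\dashv K$ (with $K$ the centre functor of Remark~\ref{rmk:functor-K}). Since $\beta$ is invertible by Theorem~\ref{mainthm}, I can form the mate of $\beta^{-1}:W\mathbf L\to\mathbf L'W$ under $W\dashv K$, namely the composite
\[
\rho\ =\ \bigl(\mathbf L K\xrightarrow{\eta\mathbf L K}KW\mathbf L K\xrightarrow{K\beta^{-1}K}K\mathbf L'WK\xrightarrow{K\mathbf L'\eps}K\mathbf L'\bigr).
\]
Then $\rho:\mathbf L K\to K\mathbf L'$ allows me to lift $K$ to $\widetilde K:\Alg(\mathbf L')\to\Alg(\mathbf L)$ by setting $\widetilde K(B,b)=(KB,\ Kb\circ\rho_B)$ on objects and $\widetilde K(g)=Kg$ on morphisms; again naturality of $\rho$ takes care of functoriality.

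Finally, to obtain $\widetilde W\dashv\widetilde K$ I would verify that for every $\mathbf L$-algebra $(A,\alpha)$ the component $\eta_A:A\to KWA$ is an $\mathbf L$-algebra morphism $(A,\alpha)\to\widetilde K\widetilde W(A,\alpha)$, and that for every $\mathbf L'$-algebra $(B,b)$ the component $\eps_B:WKB\to B$ is an $\mathbf L'$-algebra morphism $\widetilde W\widetilde K(B,b)\to(B,b)$; the two triangle identities of the lifted adjunction then follow from those of $W\dashv K$ on underlying objects. Both homomorphism conditions reduce, via the defining composite of $\rho$ and the triangle identities of $\eta,\eps$, to the invertibility of $\beta$. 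I expect this last bookkeeping to be the main---though entirely routine---obstacle, and it is the place where Theorem~\ref{mainthm} (and hence the weak-pullback hypothesis on $T$) is essentially used, since without the invertibility of $\beta$ the mate $\rho$ cannot be defined in the form above and the counit $\eps$ need not lift.
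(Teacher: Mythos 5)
Your proposal is correct and is essentially the paper's argument: the paper defines $\widetilde W(B,b)=(WB,\,Wb\circ\boldsymbol\beta_B)$ exactly as you do and then simply cites Kelly's doctrinal adjunction for the existence of the right adjoint $\widetilde K$, whereas you unpack that citation by explicitly constructing the mate $\rho$ of $\boldsymbol\beta^{-1}$ and checking that the unit and counit lift. The place where invertibility of $\boldsymbol\beta$ (hence Theorem~\ref{mainthm}) is needed is identified correctly in both.
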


\begin{proof}
Recall from Definition~\ref{def:pos-frag} that the statement of  Theorem~\ref{mainthm} implies that there is a natural isomorphism 
$$\boldsymbol \beta: \mathbf L' W \to W\mathbf L.$$ 
By Remark~\ref{rmk:functor-K}, and by standard doctrinal adjunction~\cite{kelly:doctrinal}, we know then that the adjunction $W\dashv K:\DL\to\BA$ lifts to an adjunction between the corresponding categories 
of algebras, where the lifting $\widetilde W$ of $W$ maps an $\mathbf L$-algebra 
$$
\xymatrix{
\mathbf L B
\ar[r]^{b}
& 
B
}
$$
to the $\mathbf L'$-algebra  
$$
\xymatrix{
\mathbf L'WB 
\ar[r]^{\boldsymbol\beta} 
& 
W\mathbf LB 
\ar[r]^{Wb} 
& 
WB
}
$$
\end{proof}

\medskip\noindent The above corollary shows in particular that $\Alg(\mathbf L)$ is a full coreflective subcategory of 
$\Alg(\mathbf L')$ (using that $\tilde W$ is fully-faithful, property inherited from $W$), and that $\widetilde W$ preserves initial algebras. In other words, the Lindenbaum algebra for $\mathbf L$ is the same as the one for $\mathbf L'$ (the analogous statement for free algebras only holds if the generating atomic propositions are closed under complement).

\begin{remark}
Our introductory example of positive modal logic is now regained as an instance of this theorem. It can also easily be adapted to Kripke polynomial functors. More interesting are the cases of probability distribution functor and of multiset functor. We know from the theorem above that they have maximal positive fragments, but their explicit description still needs to be worked out. 
\end{remark}

\medskip\noindent
To see how Dunn's completeness result can be obtained in our setting, we first remark that, from an algebraic point of view, completeness follows from Theorem~\ref{mainthm}. Indeed, let $I,I'$ be the initial algebras (i.e., the  Lindenbaum algebras) for the functors $\mathbf L$ and $\mathbf L'$, respectively. Then the induced arrow $I'\to WI$ is an isomorphism. Hence, if two elements (i.e., formulas) are equal in $I$ they also must be equal in $I'$, which implies, by completeness of equational logic, that every proof with $\mathbf L$-axioms can be imitated with $\mathbf L'$-axioms. Here, the terminology of axioms refers to presentations of the functors in the sense of Definitions~\ref{def:presentation} and \ref{def:presentation2}.

\medskip\noindent 
From the point of view of Kripke semantics, completeness means that if two formulas have the same semantics, then they are equal. In other words, recalling Section~\ref{sec:Completeness}, with $(X,\xi)$ ranging over all coalgebras, the family $\sem{\cdot}_{(X,\xi)}:I\to PX$, or rather $\sem{\cdot}_{(X,\xi)}:I'\to P'X$ must be jointly injective. As  in \cite{kupkeetal:algsem}, this follows from the one-step semantics $\delta':\mathbf L'P'\to P'T'$ being injective. We sketch below (Paragraphs~\hyperref[par:a]{\textbf A}-\hyperref[par:d]{\textbf D}) some of the details.

\medskip\noindent 

\paragraph{\textbf A}\label{par:a}%
First, we show that for all posetifications $T':\Pos\to\Pos$ and $X'$ in
$\Pos$ the one-step semantics $\delta'_{X'}:\mathbf L'P'X'\to P'T'X'$ is injective. Up to replacing $\delta, L,P,T$ by $\delta',\mathbf L', P', T'$, the proof is verbatim the same as in \cite[Lemma 6.12]{kurz-petr:manysorted}, using Theorem~\ref{thm:strong} in order to know that $\mathbf L'P'X'$ is given as a filtered colimit as in \cite[(44)]{kurz-petr:manysorted} and using Proposition~\ref{prop:T'-pres-monot-surj} in order to know that the posetification $T'$ preserves surjections.

\medskip\noindent

\paragraph{\textbf B}\label{par:b}%
Next, in analogy with~\cite[Lemma~6.14]{kurz-petr:manysorted}, we show that $\mathbf L'$ preserves injections. Let us emphasize that whereas all functors $\BA\to\BA$ with a presentation by operations and equations preserve injections, the same is not true for functors $\DL\to\DL$, see~\cite[Section~5.3]{myers:phd}.

\begin{proposition}\label{prop:Lpresinj}
Assume that the hypotheses of Theorem~\ref{mainthm} hold. Then $\mathbf L'$ preserves monomorphisms of distributive lattices. 
\end{proposition}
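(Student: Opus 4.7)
The plan is to exploit the hypotheses of Theorem~\ref{mainthm} in order to replace $\mathbf{L}'$ with the simpler finitary functor $\bar{L}' = \Lan_{J'}(P'T'^{\op}S'J')$, and then to combine Birkhoff duality on finite lattices with the preservation properties of the posetification $T'$ established earlier in the paper.

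First, I would observe that since $T$ preserves weak pullbacks, Theorem~\ref{thm:wpb_exsq} implies that the posetification $T'$ preserves exact squares, and hence (by the lemma preceding Theorem~\ref{mainthm}) coreflexive inserters. Corollary~\ref{cor:strong} then yields an isomorphism $\mathbf{L}' \cong \bar{L}'$. Since $\bar{L}'$ is finitary by construction, and since $\DL$ is a locally finite variety in which every monomorphism arises as a filtered colimit of monomorphisms between finite subalgebras (and filtered colimits of monomorphisms are again monomorphisms in $\DL$), it then suffices to verify that $\bar{L}'$ preserves monomorphisms between finite distributive lattices.

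For a monomorphism $m:A\to B$ in $\DL_f$, I would invoke Birkhoff duality $\DL_f\simeq\Pos_f^{\op}$: the functor $S'$ sends $m$ to a surjective monotone map $s:S'B\twoheadrightarrow S'A$ of finite posets. Next, by Proposition~\ref{prop:T'-pres-monot-surj}, $T'$ preserves monotone surjections (having a presentation in discrete arities as a posetification), so $T's:T'S'B\twoheadrightarrow T'S'A$ is again surjective. Finally, $P':\Pos^{\op}\to\DL$ is right adjoint to $S'$, hence preserves all monomorphisms; and a surjection in $\Pos$ corresponds to a monomorphism in $\Pos^{\op}$, so $\bar{L}'m \cong P'T'^{\op}S'm$ is a monomorphism in $\DL$, as required.

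The main obstacle I anticipate is the reduction step from arbitrary monomorphisms in $\DL$ to monomorphisms between finite objects via the finitarity of $\bar{L}'$: one needs both that every monomorphism in $\DL$ is a filtered colimit of monomorphisms between finite subalgebras, and that filtered colimits preserve monomorphisms in $\DL$. Both facts are standard for locally finite varieties, but care is needed to set up the compatible filtered presentation of the given mono. All remaining ingredients---the dualities, Proposition~\ref{prop:T'-pres-monot-surj}, and right-adjointness of $P'$---fit together smoothly once this reduction is in place.
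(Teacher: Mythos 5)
Your proposal is correct and follows essentially the same route as the paper's proof: reduce to monomorphisms between finite distributive lattices via finitarity and filtered colimits of monos, then on finite lattices use $\mathbf L'\cong P'T'^{\op}S'$, the finite Birkhoff duality turning monos into surjections, Proposition~\ref{prop:T'-pres-monot-surj}, and right-adjointness of $P'$. The only cosmetic difference is that the paper establishes surjectivity of $S'm$ via Zawadowski's co-comma/coreflexive-inserter characterization rather than directly from the duality.
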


\begin{proof}
We shall proceed in two steps:
\begin{enumerate} 
\item 
$\mathbf L'$ preserves monomorphisms between finite distributive lattices. This can be seen as follows: recall that on finite distributive lattices, $\mathbf L'$ is $P' {T'}^\op S'$ (Theorem~\ref{thm:strong}), and let $m : A\to B$ be an injective morphism between two finite distributive lattices. According to~\cite[Theorem~4.2]{zawadowski}, $m$ is the coreflexive inserter of its co-comma square:
\[
\xymatrix{
A
\ar[r]^m
&
B
\ar@<+0.3ex>[r]^p \ar@<-0.3ex>[r]_q
&
C
}
\]
Notice that $C$ is also finite. But on finite distributive lattices, $S'$ is an equivalence. Therefore $S'm$ is the reflexive coinserter of its comma object $(S'p,S'q)$, in particular, a monotone surjection. By Proposition~\ref{prop:T'-pres-monot-surj}, $T'S'm$ will also be a surjective monotone map between posets, thus a monomorphism in $\Pos^\op$. As $P'$ is right adjoint, it sends ${T'}^\op S'm$ to a monomorphism in $\DL$. Thus $\mathbf L'm \cong P'{T'}^\op S'm$ is again a monomorphism.

\item $\mathbf L'$ preserves arbitrary monomorphisms. Let $m : A \to B$ be a monomorphism of distributive lattices. But $\DL$ is a locally finitely presentable category, the finitely presentable objects being precisely the finite ones. Thus~\cite[Corollary~4.3]{bird} applies to conclude that $m$ can be expressed as a filtered colimit of monomorphisms between finite distributive lattices. We can now use step (1) and that $\mathbf L'$ preserves filtered colimits, to obtain that $\mathbf L' m$ is again a monomorphism (see also~\cite[Corollary 1.60]{adamekrosicky}). 
\qedhere
\end{enumerate}
\end{proof}

\begin{remark}
The above proposition shows that the modal logic given by any presentation of $\mathbf L'$ has the bounded proof property in the sense of~\cite{bezh-ghil:bpp}.
\end{remark}
\smallskip

\paragraph{\textbf C}\label{par:c}%
We can now continue the reasoning begun in Paragraphs~\hyperref[par:a]{A}-\hyperref[par:b]{B}. Using Proposition~\ref{prop:Lpresinj}, we are able to repeat the proof of~\cite[Theorem~6.15]{kurz-petr:manysorted}, showing by induction that the $n$-step semantics 
\begin{equation}\label{eq:nstepsemantics}
(\mathbf L')^{n}\Two\to P'(T')^n 1
\end{equation}
is injective for all $n\in\mathbb{N}$, which implies completeness with respect to the Kripke semantics given by $T'$-coalgebras.

\medskip

\paragraph{\textbf D}\label{par:d}%
Finally, since $T'$ is an extension of $T$, we have $T'D\cong DT$, which implies together with $P'D\cong WP$ that 
$P'(T')^n 1 \cong WPT^n1$, so that \eqref{eq:nstepsemantics} also gives completeness with respect to the Kripke semantics given by $T$-coalgebras.

\begin{remark}
As in Corollary~\ref{cor:mainthm}, or rather dually, the fact that we have $T'D\cong DT$ means that the adjunction $C\dashv D:\Set\to\Pos$ lifts to an adjunction $\widetilde C\dashv \widetilde D:\Coalg(T)\to\Coalg(T')$ so that $\Coalg(T)$ is a full reflective subcategory of $\Coalg(T')$. In particular, $\widetilde D$ preserves limits and therefore `behaviours' as given by final coalgebras or the final sequence of $T$.
\end{remark}

%========================================================%

\section{Monotone predicate liftings}\label{sec:monotone_predicate_liftings}
 
\medskip\noindent
We show that the logic of the posetification $T'$ of $T$ coincides with the logic of all monotone predicate liftings of $T$. 

\medskip\noindent Recall from~\cite{Pattinson03,Schroder05} that a predicate lifting of arity $n$ for $T$ is a natural transformation
\[
\heartsuit : \Set({-},2^n)\to\Set(T{-},2)
\]
Using the (ordinary!) 
adjunction $D\dashv V:\Pos\to\Set$, a predicate lifting can be described as a natural transformation
\[
\heartsuit':\Pos(D{-},[Dn,\mathbbm{2}])\to\Pos(D T{-},\mathbbm{2})
\]
It is called {\em monotone\/} if each component is monotone (as a map between hom-posets). By Yoneda lemma, one can also identify a predicate lifting with a map $\heartsuit: T(2^n)\to 2$. Then the above simply says that $\heartsuit$ is monotone if for all  $\overline {a_1}\le \overline {a_2}:DX\to[Dn, \Two]$, we have that   $\overline{\heartsuit \circ Ta_1}\le \overline{\heartsuit \circ Ta_2}$, where  $\overline{f}:DX\to Y$ denotes the adjoint transpose of $f:X\to  VY$.

\medskip\noindent Consider now a locally monotone $\Pos$-functor $T'$ and a finite poset $ p$. By mimicking the above, we define a predicate lifting for $T'$ of arity $ p$ as being a $\Pos$-natural transformation  
$$\heartsuit':\Pos(-, [ p, \Two]) \to \Pos(T'-, \Two)$$
which again can be identified with $\heartsuit' \in \Pos(T'([p,\Two]), \Two)$.

\begin{theorem}
\label{thm:mon-lift}
Let $T$ be an endofunctor of $\Set$ and $T':\Pos\to \Pos$ its posetification. Then there is a bijection between the predicate liftings of $T'$ of discrete arity $Dn$ and the monotone predicate liftings of $T$ of arity $n$, for each finite $n$. 
\end{theorem}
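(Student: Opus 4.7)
The plan is to translate both sides of the asserted bijection into conditions on a single function $T(2^n)\to 2$, and then check that these conditions coincide.

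First, by the $\Pos$-enriched Yoneda lemma, the poset of predicate liftings of $T'$ of arity $Dn$ is isomorphic to $\Pos(T'[Dn,\Two],\Two)$. Since $Dn$ is discrete, the internal hom $Y=[Dn,\Two]$ is the poset $\Two^n$ with pointwise order, so in the notation of Proposition~\ref{poset=refl_coins_discrete} we have $Y_0=2^n$ and $Y_1=\{(a,b)\in 2^n\times 2^n\mid a\le b\}$. Applying the construction of $T'$ from Theorem~\ref{thm:posetification}, $T'Y$ is the coinserter in $\Pos$ of
\[
\xymatrix@C=35pt{DT(Y_1)\ar@<.5ex>[r]^{DTd^0}\ar@<-.5ex>[r]_{DTd^1}&DT(Y_0)\ar[r]^-{e_Y}&T'Y.}
\]
Because $DT(Y_0)$ is discrete and $\Two$ is a poset, monotonicity of a map $DT(Y_0)\to\Two$ is automatic and the $2$-dimensional aspect of the coinserter is vacuous; hence the $1$-dimensional universal property shows that $\Pos(T'Y,\Two)$ is in bijection with functions $\hat\heartsuit\colon T(2^n)\to 2$ satisfying
\begin{equation*}
\hat\heartsuit(Td^0(\xi))\le \hat\heartsuit(Td^1(\xi))\qquad\text{for all }\xi\in T(Y_1).\tag{$*$}
\end{equation*}

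Next, on the $\Set$ side, the ordinary Yoneda lemma gives a bijection between predicate liftings $\heartsuit\colon\Set(-,2^n)\to\Set(T-,2)$ and functions $\hat\heartsuit\colon T(2^n)\to 2$, via $\hat\heartsuit=\heartsuit_{2^n}(\id_{2^n})$. Under this bijection I would verify that $\heartsuit$ is monotone in the sense stated before the theorem if and only if its Yoneda-mate $\hat\heartsuit$ satisfies $(*)$. One direction is immediate by taking $X=Y_1$ and $a_1=d^0\le d^1=a_2$. For the converse, given any pointwise inequality $a_1\le a_2\colon X\to 2^n$, the pairing map $\langle a_1,a_2\rangle\colon X\to Y_1$, $x\mapsto(a_1(x),a_2(x))$, factors $a_i$ as $d^i\circ\langle a_1,a_2\rangle$; applying $T$ and using naturality together with $(*)$ evaluated at $\xi=T\langle a_1,a_2\rangle(t)$ yields $\hat\heartsuit(Ta_1(t))\le\hat\heartsuit(Ta_2(t))$ for every $t\in TX$. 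So both kinds of liftings are parameterised by the same set of functions, which gives the required bijection.

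The main point where care is needed is the identification of the coinserter description of $T'[Dn,\Two]$ with the ``universal comparable pair'' $(d^0,d^1)\colon Y_1\rightrightarrows 2^n$: one has to observe that every pointwise-ordered pair of maps into $2^n$ factors uniquely through $(d^0,d^1)$, which is exactly what makes $(*)$ equivalent to monotonicity of $\heartsuit$. Everything else is bookkeeping with Yoneda and with the fact, built into Theorem~\ref{thm:posetification}, that the posetification is computed pointwise by the truncated nerve of the input poset.
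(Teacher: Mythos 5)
Your proposal is correct and follows essentially the same route as the paper: both arguments reduce, via (enriched) Yoneda, to maps out of $T'[Dn,\Two]$ and then exploit the coinserter presentation of the posetification over the truncated nerve of $[Dn,\Two]$, with the factorization condition through $e_Y$ being exactly monotonicity of the corresponding $T(2^n)\to 2$. Your packaging via the single condition $(*)$ and the universal comparable pair $\langle a_1,a_2\rangle$ is a slightly cleaner organization of the same injectivity-plus-image argument that the paper carries out with the epimorphism $\tau:TV\to VT'$.
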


\begin{proof}
 
Let $p$ be an arbitrary finite poset. 
Consider the composition of the two following monomorphisms:
\begin{equation}\label{predlift}
\Pos (T' ([p,\Two]),\Two)\to \Set (VT' ([p, \Two]),V \Two)\to \Set (T V ([p, \Two]),V\Two)
\end{equation}
The first arrow above is monic by faithfulness of $V$. The second one is also, as it is given by pre-composition with the natural epimorphism 
$\tau:T V \to V T'$ (the mate of the isomorphism $\alpha:DT\to T'D$ under the adjunction $D\dashv V$). The latter is indeed epic because for each poset $X$, $\tau_X$ is exactly the coinserter map $TX_0\to T'X$.  

In case the arity is discrete as $ p=D n$, notice that by $V([Dn, \Two])=2^n$, the right hand side of Equation~\eqref{predlift} is precisely $\Set(T(2^n), 2)$. A predicate lifting $\heartsuit'\in \Pos(T'([ Dn, \Two]), \Two)$ is then sent to 
\[
\heartsuit :=V \heartsuit' \circ \tau_{[Dn,\Two]}:T(2^n)\to 2
\]
Now, for $a:X\to 2^n$, easy diagram chasing shows that
\[
\overline{\heartsuit\circ Ta}=\heartsuit' \circ T'(\overline a) \circ \alpha_X
\]
hence the monotonicity of $\heartsuit$ follows. Thus the predicate liftings of $T'$ of discrete arity are among the monotone predicate liftings for $T$. 

\medskip\noindent To show the inverse correspondence, recall one more time that the posetification $T'$ is constructed as a coinserter (Theorem~\ref{thm:posetification}). Let $\heartsuit:T(2^n)\to 2$ be a predicate lifting for $T$. Then, from the universal property of coinserters, one can easily check that $\overline \heartsuit:DT(2^n)\to \Two$ factorizes to a predicate lifting for $T'$ of discrete arity, $T'([Dn,\Two])\to \Two$, if and only if $\heartsuit $ is monotone in the sense mentioned above. More in detail: let $X_0$ be the set $2^n=V[Dn, \Two]$; that is, the underlying set of the poset $[Dn, \Two]$, and $X_1$ the underlying set of the order on $[Dn, \Two]$, with projections denoted as usual $d^0,d^1:X_1 \to X_0$. Then with notations as above, one has $\overline{d^0} \leq \overline{d^1} $; thus if $\heartsuit $ is monotone, this entails $\overline {\heartsuit \circ Td^0}\leq \overline{\heartsuit\circ Td^1}$, thus $\overline{ \heartsuit}:DTX_0\to \Two $ factorizes in $\Pos$ to a predicate lifting for $T'$ of discrete arity $\heartsuit ':T'[Dn, \Two]\to \Two$.
\[
\xymatrix@C=7pc{T'DX_1 \ar[d]^{\alpha^{-1}_{X_1}} \ar@<-0.5ex>[r]_{T'Dd^1} \ar@<0.5ex>[r]^{T'Dd^0} & T'DX_0 \ar[dr]^{T'\epsilon_{[Dn,\Two]}} \ar[d]^{\alpha^{-1}_{X_0}} 
\\
DTX_1 \ar@<-0.5ex>[r]_{DTd^1} \ar@<0.5ex>[r]^{DTd^0} & DTX_0 \ar[r]^-e \ar[dr]_{\overline \heartsuit} & T' [Dn, \Two] \ar[d]^{\heartsuit '} \\
&&\Two }
\]
From the above diagram we have that $\heartsuit'\circ T'\epsilon_{[Dn,\Two]} \circ \alpha_{V[Dn,\Two]} = \overline \heartsuit$, thus we see we can recover the original monotone predicate lifting for $T$:
\[
V\heartsuit '\circ \tau_{[Dn, \Two]} = V \heartsuit' \circ VT'\epsilon_{[Dn,\Two]} \circ V\alpha_{V[Dn,\Two]} = V \overline \heartsuit = \heartsuit
\]
Finally, note that we have used the assumption that $T'$ is the posetification of $T$ in order to have an extension such that $TV\to VT'$ is epi.
\end{proof}

\begin{remark}
The theorem should be seen in the light of \cite[Theorem 4.16]{kkv:aiml} saying that for a finitary and embedding preserving functor $T':\Pos\to\Pos$ the logic $\mathbf L'$ of (necessarily monotone) predicate liftings is expressive. We also would like to recall \cite[Corollary~6.9]{kurz-leal:mfps09-j} which describes an expressive and monotone subset of all predicates liftings for any finitary weak-pullback preserving $T:\Set\to\Set$. 
\end{remark}

%========================================================%

\section{Conclusions}%\label{sec:conclusions}

In the area of semantics of programming languages one encounters a wide variety of base categories including various metric spaces and various (complete) partial orders. It would be of interest to draw the landscape of these different categories together with a toolkit connecting them. This paper can be seen as a rudimentary effort in this direction. Indeed, we relate systems and their logics across the morphism of connections 
\[
(S\dashv P:\Set^\op\to\BA)\ \longrightarrow\  (S'\dashv P':\Pos^\op\to\DL)
\]
Moreover, we transfer functors along this morphism via left Kan-extensions and characterize the functors that arise in that way as those preserving certain classes of colimits. Finally, we show how results about modal logics can be derived from such a framework. 
It will be interesting to explore whether similar techniques apply to more sophisticated domains than $\Set$ and $\Pos$.

%========================================================%

\end{document}